\DeclarePairedDelimiter\floor{\lfloor}{\rfloor}
\DeclarePairedDelimiter\norm{\lVert}{\rVert}
\DeclarePairedDelimiter\abs{|}{|}
\DeclareMathOperator{\Card}{Card}
\DeclareMathOperator{\Leb}{Leb}
\DeclareMathOperator{\supp}{supp}
\DeclareMathOperator{\CPP}{CPP}
\theoremstyle{plain}
\newtheorem{thm}{Theorem}
\newtheorem{lem}{Lemma}[section]
\newtheorem{cor}[lem]{Corollary}
\newtheorem{prop}[lem]{Proposition}
\theoremstyle{definition}
\newtheorem{rem}[lem]{Remark}
\newtheorem{defi}[lem]{Definition}
\newcommand{\eps}{\varepsilon}
\newcommand{\E}{\mathbb{E}}
\newcommand{\PP}{\mathbb{P}}
\newcommand{\1}{\mathbbm{1}} 
\newcommand{\N}{\mathbb{N}}
\newcommand{\R}{\mathbb{R}}
\newcommand{\Q}{\mathbb{Q}}
\providecommand{\U}{}
\renewcommand{\U}{\mathcal{U}}
\newcommand{\EE}[1]{\mathbb{E} \left[ #1 \right]}
\newcommand{\pp}[1]{\mathbb{P} \left( #1 \right)}
\newcommand{\QQ}{\mathbb{Q}}
\renewcommand{\epsilon}{\eps}
\renewcommand{\emptyset}{\varnothing} 
\title{The genealogy of nearly critical branching processes\\ in varying environment}
\newlength{\affilskip}
\author[1]{Florin Boenkost}
\author[2]{Félix Foutel-Rodier}
\author[1]{Emmanuel Schertzer}
\affil[1]{
    Faculty of Mathematics, University of Vienna,\authorcr Oskar-Morgenstern-Platz 1, 1090 Wien, Austria
    \vspace{\affilskip}
}
\affil[2]{
    Université Paris Cité, CNRS, MAP5, F-75006 Paris, France
    \vspace{\affilskip}
}
\begin{document}
	
\maketitle
\begin{abstract}
	Building on the spinal decomposition technique in
	\cite{FoutelSchertzer22} we prove a Yaglom limit law for the rescaled
	size of a nearly critical branching process in varying environment
	conditional on survival. In addition, our spinal approach allows us to
	prove convergence of the genealogical structure of the population at a
        fixed time horizon -- when the sequence of trees are envisioned as
	a sequence of metric spaces --  in the Gromov--Hausdorff--Prohorov (GHP)
	topology. We characterize the limiting metric space as a time-changed
	version of the Brownian coalescent point process \cite{Popovic2004}.
	
	Beyond our specific model, we derive several general results
        allowing one to go from spinal decompositions to
        convergence of random trees in the GHP topology. As a direct
        application, we show how this type of convergence naturally
        condenses the limit of several interesting genealogical
        quantities: the population size, the time to the most-recent
        common ancestor, the reduced tree, and the tree generated by $k$
        uniformly sampled individuals.  As in a recent article by the
        authors \cite{FoutelSchertzer22}, we hope that our specific
        example illustrates a general methodology that could be applied
        to more complex branching processes.
	\\\\
	\emph{Keywords and Phrases.} branching process in varying environment,
	Yaglom law, Kolmogorov asymptotics, spinal decomposition, many-to-few,
	Gromov--Hausdorff--Prohorov topology,
	coalescent point process
	\\\\
	\emph{MSC 2010 subject classification.} Primary 60J80;
        Secondary 60J90, 60F17
\end{abstract}

\section{Introduction}
Describing the genealogies of different population models as the size of
the population gets large is a prominent task in the theory of
mathematical population genetics as well as in the study of branching
processes, and goes back at least to the two seminal papers of Kingman
\cite{Kingman1982a, Kingman1982}. As of today, there is a vast
literature on the genealogies of various types of population models, see
for example \cite{Blancas2022,Grosjean2018,Kersting2022,Moehle2001,Popovic2004,Schweinsberg2003}.
Here, among other topics, we focus on the scaling limit of the
genealogy of nearly critical branching processes in varying environment
conditional on survival, see Section~\ref{Sec:Model} for a
precise definition.

The scope of this paper is two-fold. On the one hand, we prove a
\emph{Yaglom-type} result \cite{Yaglom1947} for the size of a class
of nearly critical branching processes in varying environment conditioned
on survival for a long time, as well as a result on the scaling limit of
their genealogy. For this purpose we prove a \emph{Kolmogorov-type
asymptotics} \cite{Kolmogorov1938} for the probability that the
process survives for a long time, which might be interesting in its own
right. On the other hand, our objective is to use this specific model to
illustrate the techniques developed in \cite{FoutelSchertzer22,
Greven2009, Gloede2013, gromov2007metric}. Our approach relies on new
general results on random metric spaces that could be applied widely to
branching processes with critical behavior.
The \emph{spinal decomposition} method in
\cite{FoutelSchertzer22} enables one to study genealogies by
showing convergence of random metric measure spaces in the Gromov-weak
topology using a method of moments. To prove convergence of the $k$-th
moment of a genealogy, a \emph{many-to-few} formula allows us to reduce this
problem to calculating the distribution of a tree with $k$ leaves, the
so-called $k$-spine tree. The distribution of this $k$-spine tree is
obtained through a random change of measure relying on choosing a
suitable ansatz for the genealogy of the process. In order to make the
$k$-spine method work one would require the offspring distribution to
have moments of all order, which is far from the optimal condition.
However, using a truncation argument, we are able to work under a second
moment condition. Finally, we show how to reinforce the Gromov-weak
convergence of the genealogy to a Gromov--Hausdorff--Prohorov
convergence. As a direct consequence of our main
result, we obtain the convergence of many interesting genealogical quantities.

In summary, a part of the current work is dedicated to proving general
results on random metric spaces. It is only at the end of the
article that this general formalism will be used to treat our
specific problem as a (semi-)direct application. Along the way, we will
obtain new criteria to obtain a Yaglom law for branching processes in
varying environment (see Remark~\ref{rem:comparisonAssumptions} below).
We hope that this application will help convince the reader that the
abstract formalism introduced in \cite{FoutelSchertzer22, Greven2009,
Gloede2013, gromov2007metric} (among others) and further developed
here is particularly well suited when treating general branching
processes near criticality.

\subsection{Relevant literature}

The genealogical structure of branching processes is a topic that
has drawn considerable attention, particularly in the critical case.
We give a non-exhaustive overview of some results related to
our work. A first class of results focuses on the so-called
\emph{reduced process} \cite{Fleischmann1977, Yakymiv1981,
OConnell1995}, that counts the number of individuals having
descendants at a fixed generation in the future. The limit
of the reduced process is often expressed as a (time-inhomogeneous) pure
birth process, see \cite{Kersting2022} for results in this direction for
branching processes in varying environment. Second, in the same spirit as
the theory of exchangeable coalescents, there has been an interest in
describing the genealogy obtained by sampling a fixed number $k$ of
individuals uniformly at a given time \cite{Johnston2019, Harris2020,
Lambert2018}. We also want to point at \cite{Lambert2010} for an
explicit construction of the genealogy of splitting trees as a discrete
coalescent point process, which plays a central role in
\cite{Lambert2018}.

In this work, we show how the convergence of the reduced process,
of the genealogy of a uniform sample, and of the population size can be
deduced by viewing the population as a random ultrametric measure space
converging in the Gromov--Hausdorff--Prohorov topology. Encoding trees as
random metric spaces is a long-standing idea that has proved fruitful
both for constant-size population models \cite{Evans2000, Greven2009} and
for branching processes \cite{Aldous1993, Depperschmidt2019}. Let us
point out the following difference between our approach and more common
convergence results in the Gromov--Hausdorff--Prohorov topology such as
\cite{Aldous1993, LeGall2002, Miermont2008}. Here, we consider the
genealogy of the population at a fixed time horizon, whereas many works
\cite{Aldous1993, LeGall2002, Miermont2008} are interested in the limit
of the whole genealogical tree. Although in the limit one can construct
the genealogy at a fixed time (the Brownian coalescent point process)
from that of the whole tree (the Brownian continuum random tree),
see \cite{Popovic2004}, it is not straightforward to go from the
convergence of the latter object to our type of convergence
result. Also, from a technical point of view, the convergence of
the whole tree is typically obtained by studying a height
function, whereas our approach relies on a $k$-spine method.

Spinal decompositions techniques are widely used in the
theory of branching processes. In particular, the case of a single spine has
been proven to be an important tool, see \cite{Lyons95, shi_2015,
Geiger1999}. The classical Yaglom limit and the Kolmogorov estimate for
critical Galton--Watson processes are obtained in \cite{Lyons95} using this
technique. Here, we adapt the one-spine decomposition from \cite{Lyons95} to
prove the Kolmogorov asymptotics for branching processes in varying
environment, see Theorem~\ref{thm:kolmogorov}. These techniques have been
further developed -- for example in \cite{Harris2017, FoutelSchertzer22}
-- to allow for a $k$-spine construction, which in turn allows us to
compute higher moments of the tree by a random change of measure. Such a
$k$-spine construction has also been used to extract information about
the genealogy of several branching processes \cite{Harris2020,
Johnston2019, FoutelSchertzer22}, and the current work follows a similar
path. We also want to point at \cite{Gonzalez2022, Harris2021} for a
computation of higher order moments for general spatial branching
processes, and the derivation of a Yaglom law in this context.

For an introduction to branching processes in varying environment we
refer to the monograph of Kersting and Vatutin \cite{Kersting2017Book}.
The work of Kersting \cite{Kersting2020} gives a complete
classification of critical branching processes in varying environment
and proves a Yaglom limit result in this case. Later, Cardona-Tob\'{o}n
and Palau \cite{CardonaTobon2021} managed to prove the Yaglom-type result
for critical branching processes in varying environment by applying a
two-spine decomposition and therefore providing a probabilistic proof of
the result obtained by analytical methods in \cite{Kersting2020}. They
make use of the remarkable fact that the distribution of the time to the
most-recent common ancestor of two individuals can be found explicitly.
Recently, Kersting \cite{Kersting2022} managed to obtain the asymptotic
distribution of the time to the most-recent common ancestor of all
individuals living at some large generation $n$. Our work is a
continuation and extension of these results, but does not build strongly
on them, since we are dealing with nearly critical processes instead.

While completing this article we became aware of the works of
Harris, Palau and Pardo \cite{Harris2022} and of Conchon-{-}Kerjan, Kious
and Mailler \cite{Conchon2022}. In the former work, building on
techniques of theirs developed in \cite{Harris2017} and
\cite{CardonaTobon2021}, the authors give a forward in time construction
of the $k$-spine for a critical branching process in varying environment.
Subsequently, via a change of measure relating the original branching
process in varying environment and the $k$-spine tree, they obtain the
asymptotic distribution of the splitting times of the tree spanned by $k$
individuals chosen uniformly at large times, which bares similarities
to our Corollary~\ref{cor:three_consequences}~(iii). Their results and the
spinal decomposition are related to our work and are in the same spirit.
However, due to the different points of view and approaches in their work
and ours, we believe that these two articles are complementary to each
other. We refer to Remark~\ref{rem:comparisonAssumptions} and
Remark~\ref{rem:rho} for a more detailed comparison. In the latter work
\cite{Conchon2022}, the authors derive the scaling limit of the whole
tree structure of a branching process in i.i.d.\ random environment for
the Gromov--Hausdorff--Prohorov topology. Their main result is consistent
with ours in the sense that, under their hypothesis, the limit that we
find (the Brownian coalescent point process) corresponds to the reduced
tree at a given time of their limit (the Brownian continuum random tree).
Nevertheless, as mentioned above, our type of result is not directly
implied by the convergence of the whole tree and the techniques are
different. Also, our hypothesis can be seen as more general, in the sense
that we do not require the environment to have properties of i.i.d.\
sequences, and consider a sequence of nearly critical environments rather
than a single strictly critical environment. In particular, stationary
environments (and thus i.i.d.\ environments) can only lead to the
Brownian coalescent point process in the limit, whereas we recover any
time-change of a Brownian coalescent point process, including non-binary
trees whenever the limiting variance process has jumps. Finally, let us
point to \cite{Greven2009Genealogy} which considers a binary branching
process in a random environment, modelling the effect of a catalyst that
modifies the branching rate of the population. They provide the scaling
limit of the genealogy of this branching process, recovering in the limit
a time-changed version of a coalescent point process similar to the one
arising here. Our work goes way beyond the case of binary branching
processes covered in \cite{Greven2009Genealogy}, and in a sense extends
this result to a much broader universality class.

\subsection{Outline}
The outline of the manuscript is as follows. First, in Section~\ref{Sec:Model}
we precisely define the notion of a nearly critical branching process
in varying environment and provide some examples. We also state our two main
results, the Kolmogorov asymptotics (Theorem~\ref{thm:kolmogorov}) and the
convergence of the genealogy (Theorem~\ref{thm:yaglom}), and
in Corollary~\ref{cor:three_consequences} we state some straightforward consequences of
our Gromov--Hausdorff--Prohorov convergence.
In Section~\ref{sec:gromovTopologies} we introduce the topologies on
metric measure spaces needed in this work, and prove some continuity
results for ultrametric spaces. In Section~\ref{sec:CPP}, we introduce the limiting object for the genealogy, the \emph{environmental coalescent
point process} and derive some of its properties. 
The final sections are devoted to the proofs of our results. In
Section~\ref{sec:spinalDecompositions} we recall the spinal decomposition
tools for branching processes that we need. The proof of the Kolmogorov
asymptotics (Theorem~\ref{thm:kolmogorov}) is carried out in
Section~\ref{sec:kolmogorov_proof}, and that of the convergence of the
genealogy (Theorem~\ref{thm:yaglom}) in Section~\ref{Sec:Proofs}. 
Lastly, some results of technical nature are given in Appendix~\ref{sec: Appendix}.

\section{Model} \label{Sec:Model}
Branching processes in varying environment (BPVE for short) are a natural
generalization of classical Galton--Watson processes in which 
the offspring distribution can change between generations. 
Throughout this work we will try to follow the notation of
\cite{Kersting2017Book}.

We begin by defining formally the notion of a branching process in
varying environment. Denote by $\mathcal{P}(\N_0)$ the space of
probability measures on $\N_0=\{0,1,2,\dots\}$ and let $v=(f_1,f_2,\dots)$ be a sequence
of probability measures on $\N_0$. 
\begin{defi}
        We call a process $(Z_n, n \geq 0)$ a BPVE with environment
        $v=(f_1,f_2,\dots)$, with $v \in \mathcal{P}(\N_0)^{\N}$, if $(Z_n,
        n \geq 0)$ has the representation
	\begin{align}
		Z_n = \sum_{i=1}^{Z_{n-1}} \xi_{i,n}, \quad Z_0=1,
	\end{align}
        where for each $n \geq 1$ the random variables $\{\xi_{i,n}, i
        \geq 1 \}$ are independent and distributed as $f_n$.
\end{defi}
Note that we assume that the BPVE starts from a single
individual at generation $0$. Subsequently, it will be
useful to identify a measure $f\in \mathcal{P}(\N_0)$ with its
generating function
\begin{align}
	f(s)= \sum_{k=0}^{\infty} s^k f[k], \qquad s\in [0,1],
\end{align}
where $f[k]$, $k \geq 0$, denotes the weight of the distribution $f$ at $k$.
Note that we  will use the same symbol $f$ for the distribution as well
as for the corresponding generating function $f(s)$. Recall that the
first and second factorial moments of a random variable with distribution
$f$ can be expressed as
\begin{align}
	f'(1)= \sum_{k=1}^{\infty} k f[k], \qquad f''(1)= \sum_{k=2}^{\infty} k (k-1) f[k]. 
\end{align}
In the following, we consider a sequence $Z^{(N)}=\big( Z_n^{(N)}, n
\geq 0\big)$ of branching processes in varying environment indexed by
$N$. That is, we consider a sequence of environments
$v^{(N)}=(f_1^{(N)},f_2^{(N)},\dots)$ and study the asymptotic
probability that this process survives $N$ generations as
well as the asymptotic genealogy of the population at large times.

Due to the fact that we are working with a sequence of branching processes, all
the quantities introduced so far as well as the quantities which will be
introduced in the subsequent sections depend on $N$. Additionally, all
expectations and probabilities will depend on $N$. For the convenience of
the reader and to avoid overloading the notation, we will drop
the superscript $N$ most of the times. The reader should keep the
implicit dependency on $N$ in mind. Furthermore, whenever a sum
is ranging from $1$ to $\floor{Nt}$, we will abuse the notation and simply
write $Nt$ as the upper limit. More generally, whenever no confusion is
possible, we simply write $Nt$ instead of $\floor{Nt}$, as in $f_{Nt}$ or
$\xi_{Nt}$.

Finally, it is usual to encode the genealogy of a branching process as a
random subset of the set of finite words
\[
\U = \{\emptyset\} \cup \bigcup_{n \ge 1} \N^{n}.
\]
A word $u = (u_1, \dots, u_n)$ is interpreted as an individual in the
population living at generation $\abs{u} \coloneqq n$. For two
individuals $u, v \in \U$, we also use the notation $uv$ for their
concatenation, and $u \wedge v$ for their most-recent common ancestor.

We can construct a random tree out of an independent collection
$(K_u, u \in \U)$ of random variables with $K_u \sim f_{\abs{u}+1}$ by
setting 
\begin{equation}
    T = \{ u = (u_1, \dots, u_n) : u_i \le K_{(u_1, \dots,u_{i-1})},\: i \le n \}.
\end{equation}
This construction is carried out more carefully for BPVEs in
\cite[Section~1.4]{Kersting2017Book}. The variable $Z_n$
is recovered as the size of the $n$-th generation of $T$, 
\[
    Z_n = \Card T_n,\quad
    T_n \coloneqq \{ u \in T : \abs{u} = n \},\quad
    n \in \N.
\]

\subsection{Assumptions}
In order to prove the main theorems, Theorem~\ref{thm:kolmogorov} and
Theorem~\ref{thm:yaglom}, we will work under the following assumptions on
the sequence of environments $v=(f_1,f_2,\dots)$, which we call a nearly
critical varying environment. Recall that we do not denote the dependency
of $v$ on $N$ explicitly. Let us introduce the following quantities,
\begin{equation}
	\mu_k= \prod_{i=1}^{k} f_i'(1), \quad \mu_0=1,
\end{equation}
which is the expectation of the BPVE at time $k$, namely $\EE{Z_k}=
\mu_k$. We call a sequence of branching processes in varying environment \emph{nearly critical} if  
\begin{equation}\label{Cond_on_the_environment}
	( \mu_{tN}, t \ge 0 ) \to (e^{X_t}, t \ge 0), \quad \text{ as } N \to \infty, 
\end{equation}
in the Skorohod topology for some càdlàg process $(X_t, t \ge 0)$. 
(Having in mind the important case where the environment $v$ is a
realization of an i.i.d.\ sequence, we use the terminology
process for $(X_t, t \ge 0)$ although it is a deterministic
càdlàg function). The following assumptions ensure that the variance
between the generations does not fluctuate too strongly.
Precisely, for some càdlàg non-decreasing function $\sigma^2:
\R_+ \to \R_+$ and some sequence $\kappa_N \to \infty$, we assume
that for all $t \in \R_+$ such that $\sigma^2$ is continuous at
$t$, 
\begin{equation}\label{assumption_variance}
	\frac{1}{\kappa_N} \sum_{k=1}^{Nt} f_k''(1) \to \sigma^2(t), 
	\quad \text{ as } 
	N \to \infty. 
\end{equation}
We also assume that $\sigma^2$ and $(X_t, t \ge 0)$ do not jump at the
same time. Lastly, we impose the following Lindeberg-type condition 	
\begin{equation} \label{eq:uniform_integrability}
	\forall \epsilon > 0,\quad 
	\frac{1}{\kappa_N} \sum_{k=1}^{Nt} \E[\xi_k^2 \1_{\xi_k \ge \epsilon \sqrt{\kappa_N}}] 
	\to 0, \quad \text{as $N \to \infty$},
\end{equation}
for a generic copy $\xi_k$ distributed as $f_k$. It will turn out that
assumption~\eqref{eq:uniform_integrability} is only required to derive the
asymptotics of the survival probability (the Kolmogorov
asymptotics, see Theorem~\ref{thm:kolmogorov}), and that all
other results only rely on the following weaker assumption
\begin{equation} 
	\tag{\ref*{eq:uniform_integrability}'} \label{eq:lindeberg}
	\forall \epsilon > 0,\quad 
	\frac{1}{\kappa_N} \sum_{k=1}^{Nt} \E[\xi_k^2 \1_{\xi_k \ge \epsilon \kappa_N}] 
	\to 0, \quad \text{as $N \to \infty$},
\end{equation}
which prevents any single individual from having a large number of
offspring of order $\kappa_N$. We believe that given
\eqref{Cond_on_the_environment} and \eqref{assumption_variance} this
assumption, already used in \cite{Borovkov2002}, is nearly optimal. For
instance, in the case of Galton--Watson processes \eqref{eq:lindeberg} is
known to be optimal for convergence to a Feller diffusion when the
initial population size is of order $\kappa_N$ \cite{Grimvall1974}.

\begin{rem} \label{rem:comparisonAssumptions}
	Our definition of near criticality is directly inspired by that for
	Galton--Watson processes, where one generally imposes that the mean and the
	variance of the process converge. An alternative notion of
	criticality was proposed for BPVEs in \cite{Kersting2020} and was further used in \cite{Kersting2022, CardonaTobon2021, Harris2022}. 
        It relies on the behavior of the following two series
	\[
	\mu_{tN} \sum_{k=1}^{tN} \frac{1}{f_k'(1)^2}\frac{f_k''(1)}{\mu_{k-1}} 
	\sim \kappa_N \cdot \int_{[0,t]} e^{X_t - X_s} \sigma^2(ds)
	\to \infty, 
	\quad\text{as $N \to \infty$},
	\]
	and
	\[
	\sum_{k=1}^{tN} \frac{1}{f_k'(1)^2} \frac{f_k''(1)}{\mu_{k-1}}
	\sim \kappa_N \cdot \int_{[0,t]} e^{-X_s} \sigma^2(ds)
	\to \infty,
	\quad\text{as $N \to \infty$},
	\]
	where the asymptotics under assumptions \eqref{Cond_on_the_environment} 
	and \eqref{assumption_variance} are obtained in Lemma~\ref{conv_variance}. 
	Since both series diverge, our BPVE would be classified as critical
	in \cite{Kersting2020}. Assumptions \eqref{Cond_on_the_environment}
	and \eqref{assumption_variance} are not necessary for these series to
	diverge and can therefore be seen as restrictive in that sense.
	
	It should be noted, however, that these stronger assumptions allow us
	to work with sequences of environments and lead to a notion of near
        criticality, whereas the aforementioned works consider a
        fixed environment. Moreover, we only require the mild moment
        condition \eqref{eq:uniform_integrability} on the offspring size,
        which we believe to be nearly optimal, whereas
        \cite{CardonaTobon2021, Harris2022} require a finite third
        moment, and \cite{Kersting2020, Kersting2022} a stronger second
        moment assumption. In particular, if
        \eqref{Cond_on_the_environment} and \eqref{assumption_variance}
        hold, the latter condition (assumption (B) in
        \cite{Kersting2020}) implies that for any sequence $K_N \to
        \infty$ 
	\begin{equation} \label{eq:kerstingConsequence}
		\lim_{N \to \infty} \frac{1}{\kappa_N} \sum_{i=1}^{tN} 
                \E[\xi_k^2 \1_{\xi_k > K_N} ]
		= 0,
	\end{equation}
	see Lemma~\ref{lem:kerstingAssumption}.
	Compare this with the weaker assumptions \eqref{eq:uniform_integrability} 
	and \eqref{eq:lindeberg}. In particular, \eqref{eq:kerstingConsequence} 
        prevents the limiting process $\sigma^2$ from jumping, which in
        turn implies that the limiting tree is binary.
\end{rem}

\subsection{Examples}
\label{sec:examples}

We now present some examples illustrating the conditions
\eqref{Cond_on_the_environment} and \eqref{assumption_variance}, all of
which fit the class of models we are considering here.

\begin{enumerate}
        \item Consider a sequence of nearly critical Galton--Watson
            processes $\big(Z^{(N)}_n, n \geq 0\big)$ with offspring
            distribution $f^{(N)}$ such that
	\begin{align}
		[f^{(N)}]'(1) = 1+ \frac{\alpha}{N} + o\left(\frac{1}{N}\right) \quad  \text{and} \quad
		[f^{(N)}]''(1) = \sigma^2 + o(1),
	\end{align}
        with $\alpha \in \R$ and $\sigma^2>0$. Then
	\begin{align}
		\left(\mu_{sN}, s \geq 0 \right)=\left( \exp \left(\sum_{k=1}^{sN} \log(1+ \alpha /N + o(1/N))  \right) ,s\geq 0\right)\to \left(e^{\alpha s }, s \geq 0 \right),
	\end{align}
        in the Skorohod topology as $N \to \infty$, and the variance process
	\eqref{assumption_variance} converges to $\sigma^2(t) = \sigma^2 t$.

	\item 
        In \cite{BoKer21} the authors considered a sequence of branching
        processes in random environment such that the random first and
        second moment are given by independent copies of
	\begin{align}
		[F^{(N)}]'(1)= 1+ \frac{\alpha}{N} + \frac{1}{\sqrt{N}} \zeta, \qquad [F^{(N)}]''(1)= \sigma^2 + o(1),
	\end{align}
        with $\alpha \in \R$, $\sigma^2>0$ and some mean-zero random
        variable $\zeta$ with finite second moment. Then
	\begin{align}
		\left(\mu_{sN}, s \geq 0 \right)=\left( \exp \left( \sum_{k=1}^{sN} \log [F^{(N)}_k]'(1)\right), s\geq 0 \right) \to \left( \exp(\alpha' s + B_{\rho s}), s\geq 0 \right)
	\end{align}
        in law in the Skorohod topology as $N \to \infty$, where
        $(B_s,s\geq 0)$ is a standard Brownian motion and $\alpha', \rho$
        are some constants depending on the moments of $\zeta$. 

        \item The setting of the second example is easily adapted to
            cases in which the corresponding random walk converges
            to a general L\'evy process instead. 
	
	\item Let $p \in (0,1)$. Consider a varying environment
            such that 
	\[
	\forall k \in \tfrac{N}{2} + [0, N^{1-p}],\quad 
	\PP(\xi_k = i) =   
	\begin{cases}
		\tfrac{1}{N^p}, &\text{if $i = N^p$} \\
		1-\tfrac{1}{N^p}, &\text{if $i = 0$.}
	\end{cases}
	\]
        Elsewhere, suppose that the environment is constant with mean $1$
        and variance $\beta$. Then the variance process converges to
        the linear function $\beta t$ with a jump of size $1$ at $1/2$.
        Note that \eqref{eq:lindeberg} is fulfilled, whereas
        \eqref{eq:uniform_integrability} holds only for $p < 1/2$.
        By the same token, one can extend the construction to generate
        limiting varying environments for which $\sigma^2$ is a
        subordinator and $X$ an arbitrary independent L\'evy process.
\end{enumerate}

\subsection{Main results}
\label{sec:mainResults}

We now discuss our two main results, namely the estimate for the survival
probability and the scaling limit of the genealogy of nearly critical
BPVEs.

\begin{thm}[Kolmogorov estimate] \label{thm:kolmogorov}
    Let $(Z_n,n\geq 0)$ be a sequence of BPVEs satisfying
    \eqref{Cond_on_the_environment}, \eqref{assumption_variance}, and
    \eqref{eq:uniform_integrability}. For any $t > 0$ such that
    $\sigma^2$ is continuous and strictly increasing at $t$,
    \begin{align}
            \PP(Z_{tN} > 0) \sim \frac{2}{\kappa_N} 
            \frac{1}{\int_{[0,t]} e^{-X_s} \sigma^2(ds)},
            \qquad \text{as $N \to \infty$.}
    \end{align}
\end{thm}
We note that in the case of critical or nearly critical Galton--Watson
processes, Theorem~\ref{thm:kolmogorov} reduces to well-known
results, see \cite{Kolmogorov1938} and \cite{OConnell1995}. The proof of
Theorem~\ref{thm:kolmogorov} will be given in
Section~\ref{sec:kolmogorov_proof}.	

Our second main result, Theorem~\ref{thm:yaglom}, shows that the genealogical
structure of the BPVE, when envisioned as a random metric measure space
and conditional on survival, converges in the
Gromov--Hausdorff--Prohorov (GHP) topology to a limiting continuous tree, the
environmental coalescent point process. Stating the result requires some
preliminary notation.

Recall the tree construction of the branching process in varying
environment from Section~\ref{Sec:Model}, and the notation $T_n$ for
the $n$-th generation of the process. We define a distance on the
vertices at generation $n$ via
\[
    \forall u,v \in T_n,\quad d_n(u,v) = n - \abs{u \wedge v},
\]
which is equal to the time to the most-recent common ancestor of
$u$ and $v$. Consider the empirical measure 
\[
    \lambda_n = \sum_{u \in T_n} \delta_u.
\]
Then $[T_n, d_n, \lambda_n]$ is a \emph{random metric measure space}
that encodes the genealogy of generation $n$ of the BPVE. Formally, we
can define a topology on the space of metric measure spaces, the
Gromov--Hausdorff--Prohorov topology \cite{Abraham2013}, and $[T_n, d_n,
\lambda_n]$ is a random element of this space. The definition of this
topology is recalled in Section~\ref{sec:gromovTopologies}.

In order to state our main result we briefly introduce here the scaling limit
of the latter metric measure space, and name it the environmental
coalescent point process (CPP). We construct it as a time-change of the standard Brownian CPP \cite{Popovic2004}. A more direct
construction is provided in Section~\ref{sec:CPP}. Consider a Poisson
point process $P$ on $[0,\infty) \times (0,1]$ with intensity measure $ds
\otimes \tfrac{1}{x^2} dx$. Define a distance on $\R_+$ as
\begin{align} 
    \forall  x \leq y, \quad d_B(x,y) = \sup \{ z \in (0,1] : (s,z)\in P,\: x \leq s \leq y \}.
\end{align}
Consider an independent standard exponential random variable $Z_B$. The
Brownian CPP is the random metric measure space $[(0,Z_B), d_B, \Leb]$,
where $\Leb$ is the Lebesgue measure on the interval $(0, Z_B)$. It
corresponds to the universal scaling limit of genealogies of critical
branching processes with finite variance and is illustrated in
Figure~\ref{fig:cpp_simulation}.

\begin{figure}
	\centering
	\includegraphics[width=.9\textwidth]{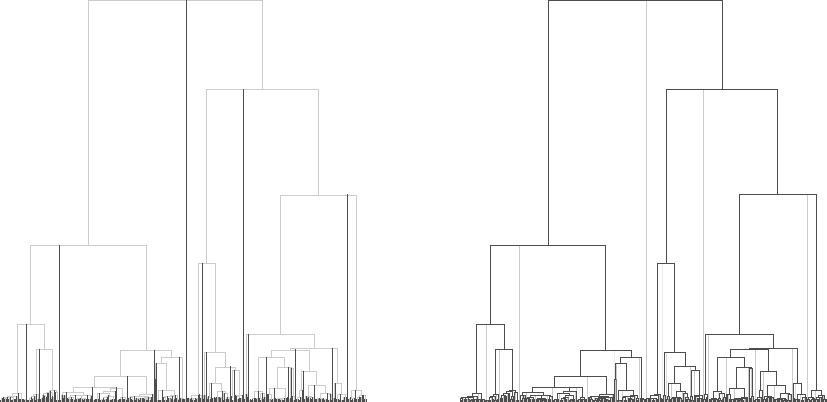}
        \caption{Simulation of a Brownian CPP. Left, a vertical line
            with height $x$ at horizontal location $s$
            represents an atom $(s,x)$ of $P$. Right, the tree
            corresponding to the CPP on the left. The distance
            $d_B$ is the tree distance on the leaves.}
	\label{fig:cpp_simulation}
\end{figure}

The environmental CPP is now defined as a time-changed version of the
Brownian CPP, which reflects the heterogeneity of means and variances due
to the varying environment. Fix a time $t > 0$ and define 
\begin{equation} \label{eq:definition_rho}
	\forall s \le t,\quad 
        F(s) \coloneqq \frac{1}{2\bar{\rho}_t} \int_{[t-s,t]} e^{X_t-X_u} \sigma^2(du),
        \qquad
	\bar{\rho}_t \coloneqq \frac{1}{2} \int_{[0,t]} e^{X_t-X_u} \sigma^2(du).
\end{equation}
Let $F^{-1}$ be the right-continuous inverse of $F$. The environmental
CPP is defined as the random metric measure space  
\[
    [(0,Z_e), d_{\nu_e}, \Leb] \coloneqq [(0, Z_B), F^{-1} \circ d_B, \bar{\rho}_t \Leb].
\]
Note that going from the Brownian to the environmental CPP requires to
rescale time according to $F$ but also to rescale mass (that is,
population size) by $\bar{\rho}_t$.

\begin{thm} \label{thm:yaglom}
    Suppose that \eqref{Cond_on_the_environment},
    \eqref{assumption_variance} and \eqref{eq:uniform_integrability}
    hold and fix $t > 0$ that fulfils the same assumption as in
    Theorem~\ref{thm:kolmogorov}. Conditional on survival at time
    $tN$, the following convergence holds in distribution for the
    Gromov--Hausdorff--Prohorov topology
    \begin{align}
            \lim_{N \to \infty} \left[ T_{tN}, \tfrac{d_{tN}}{N} ,
            \tfrac{\lambda_{tN}}{\kappa_N} \right]= \left[(0,Z_e), d_{\nu_e}, \Leb \right],
    \end{align}
    with $[ (0,Z_e), d_{\nu_e}, \Leb]$ the environmental CPP.
\end{thm}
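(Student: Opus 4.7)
The plan is to prove the convergence in two stages: first establishing Gromov-weak convergence via the method of moments applied through the $k$-spine decomposition recalled in Section~\ref{sec:spinalDecompositions}, and then upgrading to the Gromov--Hausdorff--Prohorov topology by exploiting the ultrametric structure through the general criteria developed in Section~\ref{sec:gromovTopologies}. Throughout, the conditioning on $\{Z_{tN} > 0\}$ enters via the Kolmogorov asymptotics of Theorem~\ref{thm:kolmogorov}, which provide the correct normalization of $\kappa_N / 2$ for $\bar{\rho}_t$ that appears in the definition \eqref{eq:definition_rho} of the environmental CPP.

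For the Gromov-weak convergence, I would sample $k$ individuals uniformly from $T_{tN}$ and study the rescaled $k$-ultrametric subspace they span. The many-to-few formula rewrites the $k$-th moment of a polynomial test function on the sampled metric measure space as an expectation over a $k$-spine tree, obtained from $T$ by a suitable change of measure. With the natural ansatz derived from the product of single-spine martingales, the splitting times of the $k$-spine tree, after rescaling time by $N$, become the coalescence times of a $k$-coalescent where two lineages currently alive at time $t-u$ merge at a rate driven by $e^{-X_u}\sigma^2(du)/\bar{\rho}_t$. This is precisely the law of the genealogy of $k$ uniform samples from the environmental CPP, via the change of time $F$ in \eqref{eq:definition_rho}; convergence of all polynomial moments then yields Gromov-weak convergence. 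Since only a second moment condition is assumed, the $k$-spine formula cannot be applied directly, and I would implement a truncation: decompose $\xi_k = (\xi_k \wedge M) + (\xi_k - M)_+$ for a cutoff $M = M(N)$, apply the $k$-spine computation to the branching process built from the truncated offspring (which has all moments), and control the contribution of the tail using the Lindeberg condition~\eqref{eq:lindeberg}, letting $N \to \infty$ first and $M \to \infty$ afterwards.

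To promote Gromov-weak convergence to Gromov--Hausdorff--Prohorov convergence, I would invoke the continuity results for ultrametric spaces proved in Section~\ref{sec:gromovTopologies}. For ultrametric spaces the GHP topology is determined by the Gromov-weak topology together with two tightness conditions: tightness of the diameter (automatic here because $d_{tN}/N \le t$) and the absence of ``hidden mass'' concentrated on small Gromov-weak invisible balls. The latter amounts to showing that for every $\varepsilon > 0$ the number of $\varepsilon$-balls in $T_{tN}$ whose relative mass exceeds $\varepsilon$ is bounded in probability, which follows from the same $k$-spine moment computation (already a byproduct of the Gromov-weak step) and from the Kolmogorov estimate used to normalize the total mass.

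The main obstacle I anticipate is combining the truncation with the conditioning on survival in a way that is uniform in $N$. The $k$-spine change of measure is not absolutely continuous with respect to the law conditioned on $\{Z_{tN} > 0\}$, so one must carefully control the Radon--Nikodym derivative along the spines and show that truncating atypically large offspring events contributes a vanishing error at every order $k$. The fact that a single large jump along any of the $k$ spines can distort moments makes this delicate under the weak assumption \eqref{eq:lindeberg}, and it is here that the interplay between \eqref{eq:lindeberg}, the convergence of the variance process \eqref{assumption_variance} and the Kolmogorov estimate of Theorem~\ref{thm:kolmogorov} must be pushed most carefully.
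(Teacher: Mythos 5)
Your two-stage plan matches the paper's strategy: Gromov-weak convergence via the many-to-few formula for a $k$-spine tree, combined with a truncation of the offspring distribution, and then an upgrade to GHP using the ultrametric structure.  A few remarks on where your description diverges from the paper's argument or needs correcting.

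First, the criterion you state for the GHP upgrade is wrong.  You claim it suffices that ``for every $\varepsilon > 0$ the number of $\varepsilon$-balls whose relative mass exceeds $\varepsilon$ is bounded in probability.''  That is automatic (it is at most $1/\varepsilon$ times the total mass, which is tight) and gives nothing.  The correct condition, which is what Proposition~\ref{prop:Gweak2GHP} formalizes, is the \emph{lower} mass-bound of \cite{Athreya2016}: for every $\varepsilon > 0$ the random variables $\max\{\nu_n(B)^{-1} : B \in B_\varepsilon(U_n)\}$ should be tight, i.e.\ with high probability no $\varepsilon$-ball has vanishingly small mass.  That condition is not automatic from Gromov-weak convergence.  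Your intuition for how to verify it is nevertheless on the right track: the paper deduces it (Corollary~\ref{cor:convergence_survivors}) by applying the Gromov-weak Yaglom law at an intermediate time $sN$ together with the Kolmogorov estimate, so that the numbers of survivors at time $sN$ converge to a geometric and their descendant counts to i.i.d.\ exponentials, giving the uniform lower bound.

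Second, the truncation is set up differently.  You propose a double limit, $N \to \infty$ then $M \to \infty$.  The paper instead extracts, by a diagonal argument under the Lindeberg condition \eqref{eq:lindeberg}, a single sequence $\beta_N = o(\kappa_N)$ with $\sum_j \E[\xi_j \1_{\xi_j > \beta_N}] \to 0$ and $\kappa_N^{-1}\sum_j \E[\xi_j^2 \1_{\xi_j > \beta_N}] \to 0$, and truncates at $\beta_N$.  This is important for two reasons: it guarantees the truncated environment still satisfies \eqref{Cond_on_the_environment}--\eqref{assumption_variance} in the limit, and it yields the key bound $\tilde f^{(k)}_i(1) \le \beta_N^{k-2} f''_i(1)$ that makes the higher-order branch points in the $k$-spine bias $\Delta_k$ negligible.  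A fixed $M$ followed by $M \to \infty$ would break the variance convergence for the truncated process.  Moreover, the error between the truncated and untruncated tree is controlled not in moments but by bounding the Gromov--Prohorov distance by the difference in leaf counts $|Z_{tN} - \tilde Z_{tN}|$ (Lemma~\ref{Gromov Prohorov} and Corollary~\ref{cor:coupling_convergence}), which is cleaner than trying to compare moments directly.

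Third, your anticipated obstacle about absolute continuity of the $k$-spine measure with respect to the law conditioned on survival is not where the difficulty lies.  The paper computes all polynomial moments under the unconditioned measure $\PP$ via the many-to-few formula, and only at the very end divides by $\kappa_N \PP(Z_{tN} > 0)$, invoking Theorem~\ref{thm:kolmogorov} for the asymptotics.  The spine construction never needs to be compared to the conditioned law.  The genuine technical work is rather in showing that non-binary branch points vanish under the bias $\Delta_k$ (which is where $\beta_N = o(\kappa_N)$ is used) and that the binary contribution, with the branch-length law chosen proportional to $\mu_{tN}\mu_{tN-i}^{-1}f''_{tN-i+1}(1)/f'_{tN-i+1}(1)^2$, reproduces the environmental CPP moments.

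Finally, a minor point: the limiting $k$-sample structure is not a pairwise coalescent; it is the CPP combinatorics, i.e.\ i.i.d.\ consecutive coalescence depths $H_i$ with distribution $F$, size-biased by $k!\,\bar\rho_t^{k-1}$ and relabelled by a uniform permutation $\sigma$.  Your description gets the time-change right but elides the CPP structure and the bias.
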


This result is proved in Section~\ref{Sec:Proofs}. We point out
the following fact. The limiting genealogy, the environmental CPP, is
obtained by time-changing the Brownian CPP according to the function $F$
which might be discontinuous (as in the last example of
Section~\ref{sec:examples}). As a consequence, although the Brownian CPP
is a binary tree, the environmental CPP might have (simultaneous)
multiple mergers. These multiple mergers correspond to an accumulation of
binary branch points over a time-scale shorter than $N$, due to a region
of highly variable environments. This phenomenon is illustrated
numerically in Figure~\ref{fig:multipleMergers}.

\begin{figure}
    \centering
    \includegraphics[width=.65\textwidth]{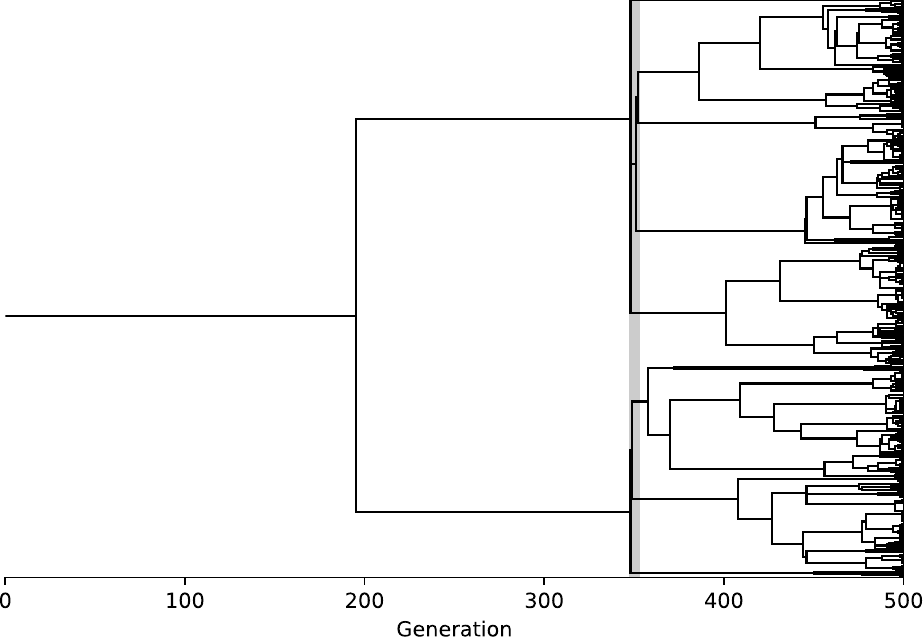}
    \caption{Simulation of the genealogy of a BPVE until generation $N =
        500$. All offspring distributions are negative binomials with
        mean $1+\alpha / N$, with $\alpha = 1$. Outside of the shaded
        region, the variance is $\sigma^2 = 2$. In the shaded region, the
        variance is increased so that the sum of the variances in
        that region is $c N$, with $c = 2$.}
    \label{fig:multipleMergers}
\end{figure}

\subsection{Consequences of \texorpdfstring{Theorem~\ref{thm:yaglom}}{Theorem 2}} 
\label{sec:consequences}

The convergence of the population in the GHP topology might seem
daunting at first sight to a reader not familiar with random metric
measure spaces. We show in this section how the GHP convergence in
Theorem~\ref{thm:yaglom} condenses the limit of several genealogical
quantities of interest. This requires some preliminary notation for these
quantities and their limits.

For each $i \le tN$, let $Z_{i, tN}$ be the number of individuals at
generation $i$ having descendants at generation $tN$ in the BPVE. Recall
that, if $(U_i)_{i \le k}$ are chosen uniformly from $T_{tN}$ (the set of
individuals alive at generation $tN$), $d_{tN}(U_i,U_j)$ denotes the time
to the most-recent common ancestor of $U_i$ and $U_j$. 
For $k \in \N$ and $\theta \ge 0$, let $(H^{\theta}_1, \dots,
H^\theta_{k-1})$ be independent random variables with cumulative
distribution function
\begin{equation} \label{eq:definition_H_theta}
	\forall s \le t, \qquad	\PP( H^{\theta} \le s ) 
        = \frac{(1+\theta) F(s)}{1+\theta F(s)}.
\end{equation}
We define an array $(H^\theta_{i,j};i,j \le k)$ via
\begin{equation} \label{eq:definition_H_max}
    \forall i \le j,\quad H^\theta_{i,j} = H^\theta_{j,i} = 
    \max \{ H^\theta_i, \dots, H^\theta_{j-1} \},
\end{equation}
with the convention that $\max \emptyset = 0$.
We note that \eqref{eq:definition_H_theta} in fact defines a valid
probability distribution for any $\theta >-1$. However, these values of
$\theta$ in $(-1,0)$ are not relevant in our case. Some motivation
for this distribution in connection with coalescent point processes is
provided in Section~\ref{sec:CPPsample}.


\begin{cor} \label{cor:three_consequences}
    Let $t > 0$ satisfy the same assumptions as in Theorem~\ref{thm:kolmogorov}.
    Conditional on $Z_{Nt} > 0$, the following limits hold in distribution as $N \to \infty$.
    \begin{enumerate}
        \item[(i)] We have
        \[
            \frac{Z_{Nt}}{\kappa_N}  
            \to 
            \mathscr{E}( \bar{\rho}_t ),
        \]
        where $\mathscr{E}(x)$ is an exponential random variable with mean $x > 0$.

        \item[(ii)] For the reduced process we have
        \[
            (Z_{sN, tN}, s < t) \to
            \left( Y\Big(\log \frac{\int_{[0,t]} e^{-X_u}
                \sigma^2(du)}{\int_{[s,t]} e^{-X_u} \sigma^2(du)} \Big), s < t \right)
        \]
        in the sense of finite-dimensional distribution at continuity
        points of $\sigma^2$, where $(Y(t), t \ge 0)$ is a Yule
        process. If $\sigma^2$ is continuous, the convergence holds in
        the Skorohod sense.

    \item[(iii)] Recall that $d_{tN}(U_i, U_j)$ denotes the time to the
        most-recent common ancestor of two individuals $U_i$ and $U_j$
        sampled uniformly from the population at time $tN$. Then
        \[
            \big( \tfrac{d_{tN}(U_i, U_j)}{N}\big)_{i,j}
            \to
            (\widetilde{H}_{\sigma_i,\sigma_j})_{i,j},
        \]
        where $\sigma$ is a uniform permutation of $\{1, \dots, k\}$
        and $(\widetilde{H}_{i,j})_{i,j}$ and independent array
        distributed as
        \begin{equation} \label{eq:mixture_H}
            \E\big[ \phi\big( (\widetilde{H}_{i,j})_{i,j} \big) \big]
            = k\int_0^\infty \frac{1}{(1+\theta)^2}
            \Big(\frac{\theta}{1+\theta}\Big)^{k-1}
            \E\big[\phi\big( (H^{\theta}_{i,j})_{i,j} \big)\big]
            d\theta,
        \end{equation}
        where $\big( (H^{\theta}_{i,j})_{i,j} \big)$ is defined in
        \eqref{eq:definition_H_max} and $\phi \colon \R_+^{k\times k}
        \to \R_+$ is continuous bounded.
    \end{enumerate}
\end{cor}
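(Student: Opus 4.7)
The plan is to derive all three claims from Theorem~\ref{thm:yaglom} by identifying each quantity as the image of the rescaled metric measure space $[T_{tN}, d_{tN}/N, \lambda_{tN}/\kappa_N]$ under a functional that is continuous in the Gromov--Hausdorff--Prohorov topology, and then computing the law of the corresponding functional applied to the environmental CPP $[(0,Z_e), d_{\nu_e}, \bar{\rho}_t \Leb]$. Throughout, $Z_B$ denotes the standard exponential variable appearing in the construction of the Brownian CPP, independent of the Poisson process $P$. Claim (i) is immediate under this paradigm: the total mass is a continuous functional for the GHP topology, the total mass of the rescaled tree is $Z_{tN}/\kappa_N$, and the total mass of the limiting CPP is $\bar{\rho}_t Z_B$, which is exponential with mean $\bar{\rho}_t$.

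For (ii), I would observe that $Z_{sN, tN}$ equals the number of equivalence classes of $T_{tN}$ under the ultrametric equivalence identifying $u$ and $v$ whenever $d_{tN}(u,v)/N \le t-s$. This counting functional is continuous on the GHP-converging spaces provided that the threshold $t-s$ is not an atom of the atom-height distribution of the limit, which is guaranteed at continuity points of $\sigma^2$, since $F$ is then continuous at $t-s$. In the limit, the number of classes equals $1 + N_s^B$, where
\begin{equation}
N_s^B \coloneqq \Card\{(u,x) \in P : u \in (0, Z_B),\: x > F(t-s)\}.
\end{equation}
Conditional on $Z_B$, the count $N_s^B$ is Poisson with parameter $Z_B (1/F(t-s) - 1)$, so integrating out the standard exponential $Z_B$ yields that $1 + N_s^B$ is geometric with success probability $F(t-s)$, which coincides with the one-dimensional marginal of a Yule process at time $-\log F(t-s)$. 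A direct manipulation of \eqref{eq:definition_rho} gives $-\log F(t-s) = \log\bigl(\int_{[0,t]} e^{-X_u}\sigma^2(du)\big/\int_{[s,t]} e^{-X_u}\sigma^2(du)\bigr)$. Finite-dimensional distributions follow by applying the same counting at nested thresholds, and Skorohod convergence when $\sigma^2$ is continuous is routine for a monotone integer-valued process.

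For (iii), sampling $k$ individuals uniformly at random and recording the pairwise distance matrix is a continuous functional on GHP-converging spaces, since the rescaling $\lambda_{tN}/\kappa_N$ makes individual atom masses negligible. In the limit, this produces $k$ i.i.d.\ uniform points on $(0, Z_B)$; let $U_{(1)} < \dots < U_{(k)}$ be their sorted values and $L_0, \dots, L_k$ the successive gaps, with $\sum_i L_i = Z_B$. The sorted distance matrix is the array $(H_{i,j})$ built from the consecutive heights $H_i = F^{-1}(d_B(U_{(i)}, U_{(i+1)}))$. The joint density of $(L_0, \dots, L_k)$ is $k!\, e^{-\sum_i L_i}/(\sum_i L_i)^k$, and the Laplace-transform identity
\begin{equation}
k!\, \frac{e^{-y}}{y^k} = \int_0^\infty \frac{k\, \theta^{k-1}}{(1+\theta)^{k+1}} \cdot (1+\theta)^{k+1} e^{-(1+\theta)y}\, d\theta, \qquad y > 0,
\end{equation}
realises this density as a mixture, over $\theta > 0$ with density $g(\theta) = k\theta^{k-1}/(1+\theta)^{k+1}$, of the joint law of $k+1$ i.i.d.\ exponentials with rate $1+\theta$. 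Conditional on $\theta$, combining $\PP(d_B(U_{(i)}, U_{(i+1)}) \le r \mid L_i) = \exp(-L_i(1/r - 1))$ with the Laplace transform of $L_i \sim \mathrm{Exp}(1+\theta)$ gives $\PP(H_i \le s \mid \theta) = (1+\theta) F(s)/(1 + \theta F(s))$, i.e.\ $H_i \sim H^\theta$, independently in $i$. This yields \eqref{eq:mixture_H}, and the uniform permutation $\sigma$ accounts for the relabeling from the sorted to the unsorted sample by exchangeability.

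The main technical step is the continuity of the quotient and sampling functionals for GHP convergence; the relevant discontinuities are ruled out either by continuity of $F$ at the threshold (for the reduced process) or by the vanishing of individual atom masses (for the $k$-sample), so the remainder is distributional bookkeeping on the Brownian CPP.
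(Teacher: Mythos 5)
Your proposal is correct and reaches all three conclusions, but it proceeds by \emph{re-deriving} inline several facts that the paper instead packages as standalone building blocks and then cites. For (i) you and the paper do exactly the same thing: the total-mass map is continuous, and the total mass of the environmental CPP is $\bar{\rho}_t Z_B \sim \mathscr{E}(\bar{\rho}_t)$. For (ii), the paper invokes Proposition~\ref{thm:continuity_reduced} (continuity of the reduced process under GHP convergence for ultrametric spaces) together with Proposition~\ref{prop:reduced_process_cpp} (the reduced process of the environmental CPP is a time-changed Yule process, obtained by time-changing the known Brownian CPP result). You instead count atoms of $P$ above level $F(t-s)$ directly and show that, after integrating out the exponential $Z_B$, the marginal is geometric with success probability $F(t-s)$; this is a correct explicit computation of the one-dimensional marginal, and you then gesture at the extension to finite-dimensional distributions via nested thresholds. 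The paper's route via Proposition~\ref{prop:reduced_process_cpp} is cleaner here because the full Yule identification (Markov structure, not just marginals) is inherited from the Brownian CPP result rather than re-established. For (iii) you essentially reprove Proposition~\ref{prop:sample_cpp_unbiased}: you write down the joint density of the gaps $(L_0,\dots,L_k)$, decompose the factor $k!\,e^{-y}/y^k$ as a mixture of $(1+\theta)$-rate exponentials with the mixing density $k\theta^{k-1}/(1+\theta)^{k+1}$, and recover the $H^\theta$ law; the paper reaches the same formula by first proving the mixture identity at the level of the exponential/Gamma distributions (Lemma~\ref{lem:expGamma}) and then applying it to the CPP. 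The paper also streamlines the ``continuity of the sampling functional'' step by noting that $\E[\phi((d_{tN}(U_i,U_j)/N)_{i,j})]$ is precisely a Gromov-weak polynomial of the normalized space $[T_{tN}, d_{tN}/N, \lambda_{tN}/\lambda_{tN}(T_{tN})]$, so continuity is automatic from the definition of the topology rather than requiring a separate argument about vanishing atom masses. Your justification of continuity for (ii) (``$F$ is continuous at $t-s$'') is essentially the same content as the continuity-point hypothesis of Proposition~\ref{thm:continuity_reduced}, just phrased differently. In short: same overall GHP-plus-continuous-functionals strategy, but the paper leans on its earlier propositions where you carry out the CPP computations from scratch, and the paper's Gromov-weak polynomial framing makes the continuity arguments essentially tautological where you argue them by hand.
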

In all three items of the previous result, the limiting random variable
can be expressed as the image of the environmental CPP under an
appropriate functional. The previous result follows from Theorem~\ref{thm:yaglom} 
by showing that these functionals are continuous and computing the law of
the corresponding images of the limiting tree.

Finally, we illustrate our result by specifying the previous
corollary to our first example, a nearly critical Galton--Watson process
with mean $1+\alpha/N + o(1/N)$ and variance $\sigma^2 + o(1)$.
Theorem~\ref{thm:kolmogorov} becomes
\[
\pp{Z_{tN} > 0} \sim 
\begin{dcases}
	\frac{1}{N}\frac{2}{\sigma^2t}, \quad &\alpha =0 \\
	\frac{1}{N} \frac{2 \alpha }{\sigma^2 (1- e^{-\alpha t})}, \quad &\alpha \neq 0.
\end{dcases}
\]
Note that the case $\alpha=0$ corresponds to the classical Kolmogorov
asymptotics \cite{Kolmogorov1938}, whereas the case $\alpha\neq 0$ can be
found in \cite{OConnell1995}. Furthermore, point (i) of
Corollary~\ref{cor:three_consequences} yields that, conditional on
$Z_{tN} > 0$, 
\[
    \frac{Z_{tN}}{N}
    \to
    \begin{dcases}
            \mathscr{E}\big(\tfrac{t \sigma^2}{2}\big), \quad &\alpha = 0, \\
            \mathscr{E}\big(\tfrac{\sigma^2}{2} \tfrac{e^{\alpha t} - 1}{\alpha} \big), 
            \quad &\alpha \ne 0.
    \end{dcases}
\]
These are well-known asymptotics for nearly critical branching processes,
see for instance \cite{OConnell1995}. Point (iii) of
Corollary~\ref{cor:three_consequences} together with a small calculation shows that
the joint limiting distribution of the split times $s_1,\dots,s_{k-1}\in
[0,t]^{k-1}$ of the subtree spanned by $(U_1,\dots,U_k)$ has density $f$
given by
\begin{equation}
	f(s_1,s_2,\dots,s_{k-1})= 
	\begin{dcases} k \left(
		\frac{1}{t}\right)^{k-1}\int_0^\infty \frac{\theta^{k-1}}{(1+\theta)^2} 
		\prod_{i=1}^{k-1} \frac{1}{1+\theta \frac{s_i}{t}} d\theta , \quad &\alpha=0, \\
		k(\alpha( e^{\alpha t}-1))^{k-1} \int_0^\infty \frac{\theta^{k-1}}{(1+\theta)^2}
		\prod_{i=1}^{k-1} \frac{e^{\alpha s_i}}{(\theta (e^{\alpha s_i} -1) + e^{\alpha t}-1)^2} d \theta, 
		\quad &\alpha \neq 0,
	\end{dcases}
\end{equation}
which is exactly the limit in \cite[Theorem~3]{Harris2020} for $\alpha =
0$. Finally, point (ii) shows that the reduced process is asymptotically
distributed as the time-changed Yule process,
\begin{equation}
	\left( Y\Big(\log \frac{1-e^{-\alpha t}}{e^{-\alpha s} - e^{-\alpha t}} \Big), s < t \right),
\end{equation}
which again is a known result, see \cite[Remark 1]{OConnell1995}. Our work extends these results to the setting of nearly critical branching processes in varying environment.

\begin{rem}\label{rem:rho}
	Let
	\[
	\forall s \le t,\quad 
	G^{(N)}(s) = \frac{\rho_s^{(N)}}{\rho_t^{(N)}},\quad 
	\rho_s^{(N)} = \sum_{k=1}^{sN} \frac{1}{f_k'(1)^2}\frac{f_k''(1)}{\mu_{k-1}}.
	\]
	The main results in \cite{Kersting2022} and \cite{Harris2022} show that after the time-rescaling $u = G^{(N)}(s)$, the
	reduced process and the law of a $k$ sample from a BPVE converge to
	the corresponding expressions for the Brownian CPP. Since $G^{(N)}(s) \to
	G(s) \coloneqq \frac{\bar{\rho}_s}{\bar{\rho}_t}$, (see Lemma~\ref{conv_variance}),
	reverting the limiting time-change, that is taking $s = F^{-1}(u)$,
	yields the environmental CPP (see Proposition~\ref{prop:time_change_cpp}).
	Thus points (ii) and (iii) of Corollary~\ref{cor:three_consequences}
	are consistent with the results obtained in \cite{Kersting2022} and
	\cite{Harris2022}. Note again that we work under different settings, 
	see Remark~\ref{rem:comparisonAssumptions}.
\end{rem}

\section{Random metric measure spaces and Gromov topologies}
\label{sec:gromovTopologies}

Our main result shows the convergence of the genealogy of the BPVE,
viewed as a random metric measure space. This section introduces the
topological notions that are required to prove this result, as well as
some results regarding ultrametric spaces. We will need to work with
two distinct topologies on the space of metric measure spaces: the
Gromov-weak topology and the stronger Gromov--Hausdorff--Prohorov (GHP)
topology. 

The bulk of our work is dedicated to proving convergence in the 
Gromov-weak topology, which is introduced in Section~\ref{sec:gromovWeakDef}. 
In this topology, convergence in distribution is strongly connected to
convergence of \emph{moments} of the metric space, which are computed out
of samples from the metric space. These moments can be computed
efficiently for branching processes using the many-to-few formula of 
Section~\ref{sec:manyToFew}. Applying this method of moments requires the
reproduction laws to have moments of any order, whereas our convergence
result only requires a finite moment of order two. To overcome this
difficulty, we use a truncation procedure similar to that in
\cite{Harris2020}. Section~\ref{sec:truncationGromov} contains some
perturbation results that are used to prove that the truncated version of
the BPVE remains close in the Gromov-weak topology to the original BPVE.

Although proving convergence in distribution in the Gromov-weak topology
is made considerably easier by the method of moments, this topology is
too weak to ensure the convergence of some natural statistics of
genealogies, such as the convergence of the reduced process. This
motivates the introduction of a stronger topology in Section~\ref{sec:GHP}, 
the Gromov--Hausdorff--Prohorov topology. Once convergence is
established for the Gromov-weak topology by computing moments, we will
show how to strengthen it to a GHP convergence through a simple tightness
argument which was developed in \cite{Athreya2016}. A similar argument
should apply broadly to critical branching processes satisfying a Yaglom
law.

\subsection{Convergence in the Gromov-weak topology}

\subsubsection{The Gromov-weak topology}
\label{sec:gromovWeakDef}

We briefly recall the definition of the Gromov-weak topology and some of
its properties that are needed in this work. The reader is referred to
\cite{Greven2009, Gloede2013, gromov2007metric} for a more complete
account.

A \emph{metric measure space} is a triple $[X,d,\nu]$ such that $(X,d)$
is a complete separable metric space and $\nu$ is a finite measure on the
corresponding Borel $\sigma$-field. Given some $k \ge 1$ and a bounded
continuous map $\phi \colon [0, \infty)^{k\times k} \to \R$, we define a
functional $\Phi$ on the space of metric measure spaces, called a
\emph{polynomial}, as 
\[
\Phi\big( [X,d,\nu] \big) = \int_{X^k} \phi\big( (d(x_i, x_j))_{i,j \le
	k} \big) \nu(dx_1)\dots\nu(dx_k).
\]
The Gromov-weak topology is defined as the topology induced by the
polynomials, that is, the smallest topology making all polynomials
continuous. The Gromov-weak topology allows us to define a random metric
measure space as a random variable with values in metric measure spaces,
endowed with the Gromov-weak topology and the corresponding Borel
$\sigma$-field. The \emph{moment} of a random metric measure space
corresponding to $\Phi$ is the expectation of the corresponding
polynomial.

Recall the tree construction of the BPVE from Section~\ref{Sec:Model}, and
the notation $T_n$ for the $n$-th generation of the process. Recall that
$d_n$ stands for the tree distance defined as 
\[
\forall u,v \in T_n,\quad d_n(u,v) = n - \abs{u \wedge v}
\]
and consider the measure 
\[
\lambda_n = \sum_{u \in T_n} \delta_u.
\]
Then $[T_n, d_n, \lambda_n]$ is the random metric measure space
encoding the genealogy of generation $n$ of the BPVE.

The appeal of the Gromov-weak topology is well illustrated by the
following result, which connects convergence in distribution for the
Gromov-weak topology to the convergence of moments, see
\cite[Lemma~2.8]{Depperschmidt2019}. 

\begin{prop}[Method of moments] \label{thm:method_of_moments}
	Consider a sequence $([X_n, d_n, \nu_n], n \ge 1)$ of random metric measure
	spaces such that
	\[
	\forall \Phi,\quad \E\big[ \Phi\big( [X_n, d_n, \nu_n] \big) \big]
	\to 
	\E\big[ \Phi\big( [X, d, \nu] \big) \big] < \infty,
	\]
	for some limiting space $[X, d, \nu]$. Then if the limiting space
	further fulfils that
	\[
	\limsup_{k \to \infty} \frac{\E[ \nu(X)^k ]^{1/k}}{k} < \infty,
	\]
	the sequence $([X_n, d_n, \nu_n], n \ge 1)$ converges to $[X, d,
	\nu]$ in distribution for the Gromov-weak topology.
\end{prop}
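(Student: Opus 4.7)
The plan is to proceed via the standard two-step route to convergence in distribution: (i) establish tightness of $([X_n, d_n, \nu_n])_{n \ge 1}$ in the Gromov-weak topology, and (ii) identify every subsequential limit with $[X,d,\nu]$ via its moments.

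For the tightness step, I would rely on the classical relative-compactness criterion for the Gromov-weak topology of Greven--Pfaffelhuber--Winter. Applied with $k = 1$ and $\phi \equiv 1$, the assumed moment convergence yields $\E[\nu_n(X_n)] \to \E[\nu(X)]$, and more generally $\E[\nu_n(X_n)^k] \to \E[\nu(X)^k]$ for every $k \ge 1$; in particular the sequence of total masses $(\nu_n(X_n))$ is tight. Choosing $k = 2$ with $\phi(d_{1,2}) = 1 \wedge d_{1,2}$ controls the law of the distance between two independently sampled points. These two pieces of information are known to suffice for tightness of $([X_n, d_n, \nu_n])$ in the Gromov-weak topology.

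To identify subsequential limits, suppose $[Y, d_Y, \nu_Y]$ arises along some subsequence. Since polynomials are bounded and continuous on the space of metric measure spaces equipped with the Gromov-weak topology, the convergence of expectations along the full sequence forces $\E[\Phi([Y, d_Y, \nu_Y])] = \E[\Phi([X, d, \nu])]$ for every polynomial $\Phi$. It then remains to check that the distribution of $[X,d,\nu]$ is uniquely characterised by its polynomial moments. Polynomials form a point-separating algebra of bounded continuous functionals, so via a Stone--Weierstrass type argument this reduces to a classical determinacy question for a moment problem: the hypothesis $\limsup_k \E[\nu(X)^k]^{1/k}/k < \infty$ is a Carleman-type condition on the scalar random variable $\nu(X)$, and the crude bound $\abs{\Phi([X, d, \nu])} \le \|\phi\|_\infty \, \nu(X)^k$ lets one transfer the determinacy from the total mass to arbitrary polynomials.

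The main obstacle is this final determinacy step: passing from moment determinacy of the scalar random variable $\nu(X)$ to uniqueness of the full law of $[X, d, \nu]$ as a random metric measure space. The key point is that the size of a $k$-th order polynomial is controlled by the $k$-th moment of $\nu(X)$, so the stated Carleman-type growth condition is exactly what is required to make the class of polynomials distribution-determining. Combining tightness from the first step with this uniqueness of subsequential limits then yields convergence in distribution for the Gromov-weak topology.
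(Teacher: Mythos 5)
The paper does not prove this proposition; it cites \cite[Lemma~2.8]{Depperschmidt2019} directly, so there is no in-paper proof to compare against. Your high-level plan — tightness plus identification of subsequential limits via moment determinacy — is the natural one and is essentially what underlies the cited lemma, but as written the argument contains a factual error and two substantive gaps.

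First, polynomials are \emph{not} bounded functionals on the space of metric measure spaces: $\Phi([X,d,\nu]) = \int_{X^k} \phi\, d\nu^{\otimes k}$ satisfies only $\abs{\Phi([X,d,\nu])} \le \norm{\phi}_\infty\,\nu(X)^k$, and $\nu(X)$ is unbounded (the measures here are finite, not probability measures). This matters twice in your argument. In the identification step, you pass from convergence in distribution along a subsequence to convergence of $\E[\Phi]$; since $\Phi$ is unbounded you need uniform integrability of $\Phi([X_n,d_n,\nu_n])$, which in turn needs a uniform bound on moments of $\nu_n(X_n)$ of order strictly higher than $k$. This is fixable (take $\phi \equiv 1$ and order $k+1$), but it must be said, and it is precisely the reason the Carleman-type condition on $\nu(X)$ enters. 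This unboundedness is also why the determinacy question is not the ``probability-measure'' moment problem you implicitly treat it as.

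Second, the tightness step is incomplete. The Greven--Pfaffelhuber--Winter precompactness criterion for the Gromov-weak topology has \emph{three} ingredients: tightness of the total masses, tightness of the pairwise-distance distributions, and a modulus-of-mass-distribution (lower mass) condition controlling the $\nu$-mass of points whose $\epsilon$-balls carry little mass. Your argument checks only the first two; it is not clear that the third follows from moment convergence, and you give no argument for it. (Your test function $\phi(d_{1,2}) = 1 \wedge d_{1,2}$ also does not control large distances; you would want something like $\phi_R = (1 - (d_{1,2}-R)^+)^+$ for large $R$, or uniform integrability of the second moments of the distance.) Without the third condition, ``these two pieces of information are known to suffice for tightness'' is not a valid step.

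Third, the determinacy step — showing that the polynomial moments, under the stated Carleman condition, uniquely determine the law of a random metric measure space — is the real content of the cited lemma, and you only gesture at it. The reduction you propose, ``transfer the determinacy from the total mass to arbitrary polynomials via the crude bound $\abs{\Phi} \le \norm{\phi}_\infty\,\nu(X)^k$,'' does not by itself show that two random metric measure spaces with the same polynomial moments have the same law: one needs, in addition to the scalar moment determinacy of $\nu(X)$, an argument (typically via Gromov's reconstruction theorem and exchangeable-array/de Finetti considerations) that the normalized distance-matrix distributions are also determined. As the paper's remark following the proposition notes, even identifying the limiting moments with those of an actual metric measure space is not automatic. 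I would not accept this as a complete proof; it is an outline whose two central steps (tightness and moment determinacy) are asserted rather than established.
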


\begin{rem}[Identification of the limit]
    Let us compare the previous result with the usual method of
    moments for real random variables.
    Let $(X_n)_n$ be a sequence of real r.v.s and assume that for
    every $k\in\N$ there exists some $m_k$ such that
    \[
        \mathbb{E}\big[X_n^k\big] \to m_k, \qquad \text{as $n \to \infty$}.
    \]
    Subject to an additional technical condition, the method of
    moments leads to two key outcomes: (1) the $m_k$'s coincide with
    the moments of an underlying random variable $X_\infty$, and (2)
    the sequence $(X_n)_n$ converges in distribution to $X_\infty$.
    This is in contrast with the result above, for
    which (1) is not guaranteed; specifically, the limiting moments are
    required to coincide with the moments of a limiting metric space.
    Notably, Proposition~\ref{thm:method_of_moments} can be enhanced by
    relaxing this requirement, as detailed in \cite{FoutelSchertzer22},
    Section 4. This improvement hinges on extending the Gromov-weak
    topology to non-separable ultrametric spaces through a de Finetti
    representation for exchangeable coalescents, as obtained in
    \cite{Foutel2021}.
\end{rem}

\subsubsection{Truncation in the Gromov-weak topology}
\label{sec:truncationGromov}

Applying the method of moments requires us to carry out a truncation of the
reproduction laws for all moments to be finite. In order to deduce 
the convergence of the original BPVE from that of its truncation,
we need to show that the two trees remain close with high probability
for an adequate choice of distance between trees. It turns out that one
can define a distance on the space of metric measure spaces that induces
the Gromov-weak topology \cite{Greven2009, Gloede2013, gromov2007metric}.
There are several formulations for such a distance, we work with the
Gromov--Prohorov distance of \cite{Greven2009, Depperschmidt2019}. It is
defined as
\[
    d_{\mathrm{GP}}\big( [X,d,\nu], [X',d',\nu']\big)
    =
    \inf_{Z, \phi, \phi'} d_\mathrm{P}\big(\nu \circ \phi^{-1}, 
    \nu' \circ (\phi')^{-1}\big),
\]
where the infimum is taken over all complete separable metric spaces $(Z,
d_Z)$ and isometric embeddings $\phi \colon X \hookrightarrow Z$ and
$\phi' \colon X' \hookrightarrow Z$, and where $\nu \circ \phi^{-1}$ is the
pushforward measure of $\nu$ by the map $\phi$, and $d_{\mathrm{P}}$
is the Prohorov distance between finite measures on $(Z, d_Z)$. The
following simple lemma will be the key to control the distance between
the truncated and the original tree.

\begin{lem} \label{Gromov Prohorov}
    Consider a metric measure space $[X,d,\nu]$. Let $X'$ be a closed
    subset of $X$, and let $d'$ and $\nu'$ be the restrictions of $d$ and
    $\nu$ to $X'$. Then
    \[
        d_{\mathrm{GP}}\big( [X, d, \nu], [X', d', \nu'] \big) 
        \le \nu(X \setminus X').
    \]
\end{lem}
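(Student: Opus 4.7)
The plan is to avoid taking the infimum over all ambient metric spaces and instead exhibit a single candidate embedding. A natural choice is to take the common ambient space $Z = X$ itself, with $\phi = \mathrm{id}_X$ and $\phi'$ the inclusion $X' \hookrightarrow X$. Both maps are isometries, because $d'$ is by assumption the restriction of $d$, and both are into a complete separable metric space since $X'$ is closed in $X$. Under this choice, the pushforward $\nu \circ \phi^{-1}$ is just $\nu$, and $\nu' \circ (\phi')^{-1}$ is $\nu'$ regarded as a finite measure on $X$ supported on $X'$. The problem therefore reduces to showing that, as finite measures on $X$,
\[
d_{\mathrm{P}}(\nu, \nu') \le \nu(X \setminus X').
\]

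To establish this, set $\epsilon \coloneqq \nu(X \setminus X')$ and check the two standard inequalities that characterise the Prohorov distance. For any Borel $A \subseteq X$, one direction is immediate from $\nu'(A) = \nu(A \cap X') \le \nu(A)$, which trivially gives $\nu'(A) \le \nu(A^\epsilon) + \epsilon$. For the other direction, decompose
\[
\nu(A) = \nu(A \cap X') + \nu(A \setminus X') \le \nu'(A) + \nu(X \setminus X') = \nu'(A) + \epsilon,
\]
and then use $\nu'(A) \le \nu'(A^\epsilon)$ to conclude $\nu(A) \le \nu'(A^\epsilon) + \epsilon$. Both Prohorov inequalities hold with parameter $\epsilon$, so $d_{\mathrm{P}}(\nu, \nu') \le \epsilon$, and taking the infimum in the definition of $d_{\mathrm{GP}}$ delivers the stated bound.

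There is essentially no obstacle here: the only point to be slightly careful about is the convention used for the Prohorov distance between finite (as opposed to probability) measures, since $\nu$ and $\nu'$ generally have different total masses. The extension used in \cite{Greven2009, Depperschmidt2019} is exactly the one characterised by the two inequalities above for Borel (equivalently closed) sets, so the manipulation goes through verbatim. The argument is purely deterministic and does not use any structure of the BPVE, which is why this lemma will later serve as the truncation device for all our tree-valued approximations.
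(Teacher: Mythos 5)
Your proof is correct and follows essentially the same route as the paper's: embed both spaces into $X$ itself via the identity and the canonical inclusion, then bound the Prohorov distance by $\nu(X \setminus X')$. The paper states the Prohorov bound without spelling out the two defining inequalities, whereas you verify them explicitly; this is a more detailed writeup of the same argument, not a different one.
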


\begin{proof}
    Since the canonical injection from $X'$ to $X$ is an isometry,
    \[
        d_{\mathrm{GP}}\big( [X, d, \nu], [X', d', \nu'] \big) 
        \le d_{\mathrm{P}}( \nu, \nu') 
        \le \nu(X \setminus X'),
    \]
    where $d_{\mathrm{P}}$ stands for the Prohorov distance on finite
    measures on $(X,d)$.
\end{proof}

The practical interest of the previous result lies in the following
corollary.

\begin{cor} \label{cor:coupling_convergence}
        Let $([X_n, d_n, \nu_n], n \ge 1)$ be a sequence of random metric
        measure spaces and, for $n \ge 1$, let $X'_n \subseteq X_n$ be a
        closed subset, and $d'_n$ and $\nu'_n$ be the restrictions of
        $d_n$ and $\nu_n$ to $X'_n$. Suppose that, as $n \to \infty$,
	\begin{enumerate}
		\item we have $\abs{\nu_n(X_n) - \nu_n(X'_n)} \to 0$ in probability;
		\item we have $[X'_n, d'_n, \nu'_n] \to [X, d, \nu]$ in distribution
		for the Gromov-weak topology.
	\end{enumerate}
        Then $([X_n, d_n, \nu_n])_n$ also converges to $[X,d,\nu]$ in
        distribution for the Gromov-weak topology.
\end{cor}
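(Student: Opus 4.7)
The plan is to observe that this corollary is essentially an immediate consequence of the previous Lemma~\ref{Gromov Prohorov} combined with a Slutsky-type argument for convergence in distribution in a metric space. The key point is that the Gromov-weak topology is metrized by the Gromov--Prohorov distance $d_{\mathrm{GP}}$, so convergence in distribution in the Gromov-weak topology is convergence in distribution on the metric space $(\mathcal{M}, d_{\mathrm{GP}})$ of isometry classes of metric measure spaces.

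First I would apply Lemma~\ref{Gromov Prohorov} pathwise: for each realization of $[X_n, d_n, \nu_n]$ and its restriction to $X'_n$,
\[
    d_{\mathrm{GP}}\big( [X_n, d_n, \nu_n], [X'_n, d'_n, \nu'_n] \big)
    \le \nu_n(X_n \setminus X'_n) = \nu_n(X_n) - \nu_n(X'_n),
\]
since $X'_n \subseteq X_n$ and $\nu'_n$ is just the restriction of $\nu_n$. By assumption (i), the right-hand side tends to $0$ in probability, hence
\[
    d_{\mathrm{GP}}\big( [X_n, d_n, \nu_n], [X'_n, d'_n, \nu'_n] \big) \xrightarrow{\PP} 0.
\]

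Next, assumption (ii) states that $[X'_n, d'_n, \nu'_n] \to [X, d, \nu]$ in distribution for the Gromov-weak topology. Invoking the standard Slutsky-type lemma (if $Y_n \to Y$ in distribution on a separable metric space and $d(Y_n, Z_n) \to 0$ in probability, then $Z_n \to Y$ in distribution), applied to the metric space $(\mathcal{M}, d_{\mathrm{GP}})$ with $Y_n = [X'_n, d'_n, \nu'_n]$ and $Z_n = [X_n, d_n, \nu_n]$, yields the desired convergence $[X_n, d_n, \nu_n] \to [X, d, \nu]$ in distribution for the Gromov-weak topology.

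There is no real obstacle to this proof: the content lies entirely in Lemma~\ref{Gromov Prohorov}, and everything else is a routine invocation of the fact that the Gromov--Prohorov distance metrizes the Gromov-weak topology together with Slutsky's lemma. The only minor check is that the relevant version of Slutsky's lemma is applicable, which only requires $(\mathcal{M}, d_{\mathrm{GP}})$ to be a (separable) metric space — a fact established in \cite{Greven2009, Depperschmidt2019}.
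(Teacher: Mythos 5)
Your proof is correct and follows essentially the same route as the paper: bound $d_{\mathrm{GP}}$ between the original and the restricted space via Lemma~\ref{Gromov Prohorov}, conclude that this distance vanishes in probability by assumption (i), and then invoke the Slutsky-type argument (which the paper leaves implicit in ``now follows from'') to transfer the convergence in distribution from $[X'_n, d'_n, \nu'_n]$ to $[X_n, d_n, \nu_n]$.
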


\begin{proof}
    By Lemma~\ref{Gromov Prohorov}, 
    \[
        \PP\big( d_{\mathrm{GP}}\big( [X_n, d_n, \nu_n], [X'_n, d'_n, \nu'_n]
        \big) \ge \epsilon \big)
        \le
        \PP\big( \nu_n(X_n) - \nu_n(X'_n) \ge \epsilon \big)
        \to 0,\qquad \text{as $n \to \infty$.}
    \]
    Since $d_{\mathrm{GP}}$ induces the Gromov-weak topology, the convergence
    of $([X_n, d_n, \nu_n])_n$ now follows from that of $([X'_n, d'_n, \nu'_n])_n$.
\end{proof}

\subsection{The Gromov--Hausdorff--Prohorov topology}
\label{sec:GHP}

We now turn to the definition of two very related topologies, the
Gromov--Hausdorff--Prohorov and the Gromov--Hausdorff-weak topologies.
Following \cite{Athreya2016}, we say that a sequence of metric measure
spaces $\big([X_n, d_n, \nu_n]\big)_{n \ge 1}$ converges in the Gromov--Hausdorff-weak 
sense to $[X, d, \nu]$ if there exists a complete separable metric space
$(Z, d_Z)$ and isometries $\phi_n \colon \supp \nu_n \hookrightarrow Z$
and $\phi \colon \supp \nu \hookrightarrow Z$ such that 
\begin{equation} \label{eq:GHwConvergence}
    d_{\mathrm{H}}\big( \phi_n(\supp \nu_n), \phi(\supp \nu) \big) 
    +
    d_{\mathrm{P}}\big( \nu_n \circ \phi_n^{-1}, \nu \circ \phi^{-1} \big) 
    \to 0,\qquad \text{as $n \to \infty$},
\end{equation}
where $d_{\mathrm{H}}$ is the Hausdorff distance on $(Z, d_Z)$ and
$d_{\mathrm{P}}$ is the Prohorov distance on finite measure on $(Z,d_Z)$. On the
other hand, this sequence converges in the Gromov--Hausdorff--Prohorov
sense if there exist isometries $\phi_n \colon X_n \hookrightarrow Z$ and
$\phi \colon X \hookrightarrow Z$ such that 
\begin{equation} \label{eq:GHPConvergence}
    d_{\mathrm{H}}\big( \phi_n(X_n), \phi(X) \big) 
    +
    d_{\mathrm{P}}\big( \nu_n \circ \phi_n^{-1}, \nu \circ \phi^{-1} \big) 
    \to 0,\qquad \text{as $n \to \infty$.}
\end{equation}
Clearly, a sequence of metric measure spaces converges
Gromov--Hausdorff-weakly if and only if the sequence of the
supports of the measures converges in the Gromov--Hausdorff--Prohorov
sense. The two notions of convergence coincide when the spaces have full
support, that is, when $\supp \nu = X$.

The Gromov--Hausdorff--Prohorov topology is fairly standard and has been
used extensively for deriving scaling limits of trees and other metric
spaces, which makes it a natural choice for formulating our main result.
However, when trying to reinforce a Gromov-weak convergence to a GHP one,
one runs into the problem that the two topologies are not formally
defined on the same state space. The GHP topology is defined on the space of
\emph{strong} equivalence classes of metric measure spaces: two 
spaces are said to be strongly equivalent if there exists a bijective
isometry between them that preserves the measures, see Definition~2.4 in
\cite{Abraham2013}. On the other hand, the Gromov-weak topology is
defined on \emph{weak} equivalence classes of metric measure spaces: two
spaces are weakly equivalent if their supports are strongly equivalent in
the above sense, see Definition~2.1 in \cite{Greven2009}. The
Gromov--Hausdorff-weak topology is an adaptation of the GHP topology to
weak equivalence classes; it is less standard, but more convenient to use
when working with moments and Gromov-weak convergence. In order to obtain
a GHP convergence, we will first prove the Gromov-weak convergence, then
reinforce it to a Gromov--Hausdorff-weak one, and finally use the fact
that the metric measure spaces considered here have full support to
deduce the GHP convergence.

In the rest of this section, we first introduce the notion of reduced
process of an ultrametric space and show that this reduced process is a
continuous functional of the GHP topology. Then we apply the results in
\cite{Athreya2016} to give a criterion for reinforcing a Gromov-weak
convergence to a Gromov--Hausdorff-weak one for ultrametric spaces.

\subsubsection{Ultrametric spaces and reduced processes} 
\label{sec:ultrametric}

All metric spaces that are considered in this work are constructed as
genealogies of population models at a fixed time horizon. They all have
the further property of being \emph{ultrametric spaces}. A metric space
$(U,d)$ is called ultrametric if it fulfils the stronger triangle inequality
\[
\forall x,y,z \in U, \quad d(x,y) \le \max \{ d(x,z), d(z,y) \}.
\]
In this section we construct the reduced process associated with a general
ultrametric space and prove that the function mapping an ultrametric
space to its reduced process is continuous in the GHP topology.

A well-known property of ultrametric spaces is that the set of open balls
of a given radius form a partition of the space, that is,
\[
    x \sim_t y \iff d(x,y) < t
\]
is an equivalence relation. We will use the notation $B_{x,t}(U) = \{ y
\in U : d(x,y) < t\}$ for the open ball of radius $t$ around $x$, and 
\[
B_t(U) = \{ B_{x,t}(U), x \in U \}
\]
for the set of balls of radius $t$ of an ultrametric space $(U,d)$. In
the genealogical interpretation of an ultrametric space, a ball of radius
$t$ corresponds to the set of individuals sharing a common ancestor at
time $t$ in the past. Thus, we can envision $B_t(U)$ as the set of
ancestral lineages of the population at time $t$ in the past. This
motivates the introduction of the process 
\[
\forall t > 0,\quad L_t(U) = \Card B_t(U)
\]
counting the number of ancestral lines, which we will refer to as the
\emph{reduced process} of the ultrametric space $(U,d)$. (Note the time
reversal compared to the usual definition of a reduced process: time is
flowing backward and not forward.)

The result proved in this section shows that the convergence of the reduced
process is a consequence of the convergence of the ultrametric spaces in
the GHP topology. Actually, this continuity property would hold under the
weaker Gromov--Hausdorff topology, defined for instance in Section~4 of
\cite{evans2006probability}, but we only state the result for the GHP
topology so as to not introduce notions that will not be required later
on.

\begin{prop} \label{thm:continuity_reduced}
        Suppose that $\big([U_n, d_n, \nu_n], \, n \ge 1\big)$ is a sequence of
        compact ultrametric measure spaces converging in the
        Gromov--Hausdorff--Prohorov topology to $[ U, d, \nu ]$. Then
	\[
	(L_t(U_n), t > 0) \to (L_t(U), t > 0),
	\quad \text{as $n \to \infty$},
	\]
        in the sense of pointwise convergence at each continuity point of
        $(L_t(U), t > 0)$. If $(U,d)$ is binary, that is, $(L_t(U), t >
        0)$ only makes jumps of size $1$, the convergence also holds in
        the Skorohod space $\mathbb{D}([t_0, \infty))$ for all continuity
        point $t_0 > 0$ of the reduced process.
\end{prop}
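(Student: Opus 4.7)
My plan is to exploit the rigidity of the open-ball partition of an ultrametric space together with the Hausdorff part of the GHP convergence. First I would realise the convergence in a common ambient space: pick a complete separable $(Z, d_Z)$ and isometries $\phi_n \colon U_n \hookrightarrow Z$, $\phi \colon U \hookrightarrow Z$ with $\eps_n \coloneqq d_{\mathrm{H}}(\phi_n(U_n), \phi(U)) \to 0$. Since the embeddings are isometric, all intra-space distances equal ambient distances in $Z$, and only the Hausdorff (not the Prohorov) component of GHP will enter the argument.

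Next, I would establish a gap property at continuity points. Since $d \colon U \times U \to [0, \infty)$ is continuous and $U \times U$ is compact, the set $D \coloneqq d(U \times U) \cap (0, \infty)$ of positive pairwise distances is closed in $(0, \infty)$. A value $s > 0$ is a jump of $s \mapsto L_s(U)$ precisely when some pair realises the distance $s$, i.e.\ iff $s \in D$; hence a continuity point $t$ lies in the open set $(0, \infty) \setminus D$ and admits $\eta > 0$ with $(t - \eta, t + \eta) \cap D = \emptyset$. Writing $k = L_t(U)$ and letting $B_1, \dots, B_k$ be the corresponding balls of radius $t$, this forces $\diam(B_i) \leq t - \eta$ for every $i$ and $\min_{i \neq j} d(B_i, B_j) \geq t + \eta$.

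The pointwise convergence is then a direct transfer argument. Pick representatives $x_i \in B_i$ and, by Hausdorff convergence, companions $x_i^n \in U_n$ with $d_Z(\phi_n(x_i^n), \phi(x_i)) \leq \eps_n$; for $n$ large the $x_i^n$'s are pairwise at distance $\geq t + \eta - 2\eps_n > t$, hence lie in $k$ distinct balls of radius $t$ in $U_n$, giving $L_t(U_n) \geq k$. Conversely, any $y \in U_n$ has a companion $\tilde y \in U$ with $d_Z(\phi_n(y), \phi(\tilde y)) \leq \eps_n$; if $\tilde y \in B_i$, the triangle inequality in $Z$ yields $d_n(y, x_i^n) \leq 2\eps_n + d(\tilde y, x_i) \leq 2\eps_n + (t - \eta) < t$ for $n$ large, so $y$ sits in the ball of $x_i^n$, whence $L_t(U_n) \leq k$.

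For the binary case I would bootstrap pointwise convergence to $J_1$ Skorohod convergence on $\mathbb{D}([t_0, \infty))$ by working on any compact $[t_0, T]$ with $T$ a continuity point of $L_\cdot(U)$. There $L_\cdot(U)$ has finitely many jumps $\tau_1 < \cdots < \tau_m$ in $(t_0, T)$, each of size one. Picking interleaving continuity points $t_0 = s_0 < s_1 < \cdots < s_m < s_{m+1} = T$ with $s_{j-1} < \tau_j < s_j$ (possible as the jump set is at most countable), the pointwise result combined with integer-valuedness gives $L_{s_j}(U_n) = L_{s_j}(U)$ for every $j$ and $n$ large. Since $L_\cdot(U_n)$ is itself integer-valued and non-increasing, it must be constant on each $[s_{j-1}, s_j]$ outside a single unit jump at some $\tau_{j,n} \in (s_{j-1}, s_j)$; shrinking the $s_j$'s towards the $\tau_j$'s forces $\tau_{j,n} \to \tau_j$, which is exactly $J_1$ convergence. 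The main delicate point is the gap characterisation of paragraph~2 (closedness of $D$ combined with the interpretation of jumps); once secured, both pointwise and Skorohod convergences follow from routine compactness and monotonicity arguments.
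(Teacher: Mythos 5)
Your argument for the pointwise convergence is correct, and it takes a route that is organizationally distinct from the paper's even though it is driven by the same Hausdorff-transfer mechanism. Where the paper proves the two-sided sandwich $L_{t+2\epsilon}(U) \le L_t(U_n) \le L_{t-2\epsilon}(U)$ (the lower bound by transferring representatives, the upper by swapping the roles of $U$ and $U_n$) and then invokes that an integer-valued monotone function is constant on a neighbourhood of any continuity point, you instead isolate the geometric content of continuity: the realised-distance set $D = d(U\times U)\cap(0,\infty)$ is closed by compactness, and $t$ is a continuity point of $L_\cdot(U)$ precisely when $t\notin D$, which leaves a spectral gap $(t-\eta,t+\eta)\cap D = \emptyset$ bounding intra-ball diameters by $t-\eta$ and inter-ball distances by $t+\eta$. (Note this characterisation of jumps uses the left-continuity of $L_\cdot(U)$, which holds for compact ultrametric spaces but is worth stating explicitly.) The gap lets you read off $L_t(U_n)=L_t(U)$ directly rather than by a squeeze. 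Both routes are valid; yours makes the role of ultrametricity more visible, while the paper's is slightly more parsimonious and requires only one transfer direction plus monotonicity.

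For the Skorohod part the paper simply cites Jacod--Shiryaev; your self-contained construction is a reasonable alternative, but as written it has a small gap. With a single continuity point $s_j$ placed in $(\tau_j,\tau_{j+1})$, the interval $(s_{j-1},s_j]$ indeed contains exactly one jump $\tau_{j,n}$ of $L_\cdot(U_n)$, but "shrinking the $s_j$'s towards the $\tau_j$'s" cannot simultaneously localise all the jump times: the shared endpoint $s_{j-1}$ would have to be close to $\tau_j$ (to pin $\tau_{j,n}$) and close to $\tau_{j-1}$ (to pin $\tau_{j-1,n}$) at once, which is impossible once $\eta < \tau_j - \tau_{j-1}$. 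The fix is to sandwich each $\tau_j$ with a \emph{pair} of continuity points $a_j < \tau_j < b_j$, both within $\eta$ of $\tau_j$, with $b_{j-1} < a_j$. Pointwise convergence at all $a_j,b_j$ together with monotonicity and integrality then forces $L_\cdot(U_n)$ to be constant on each $[b_{j-1},a_j]$ and to have exactly one unit jump in each $(a_j,b_j]$, giving $\abs{\tau_{j,n}-\tau_j}<\eta$ for large $n$. With this adjustment your Skorohod argument is sound.
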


\begin{proof}
    Let $(Z, d_Z)$ and $\phi \colon U \hookrightarrow Z$, $\phi_n \colon
    U_n \hookrightarrow Z$ be as in \eqref{eq:GHPConvergence}. Fix $n \ge
    1$ and suppose that 
    \begin{equation} \label{eq:smallHausdorff}
        d_{\mathrm{H}}(\phi(U), \phi_n(U_n)) < \epsilon,
    \end{equation}
    for some $\epsilon > 0$. Fix $t > 2\epsilon$ and let $(x_i)_{i \le L_t(U)}
    \in U$, such that each $x_i$ belongs to a distinct open ball of
    radius $t$ of $U$. (That is, they are such that $d(x_i, x_j) \ge t$,
    for $i \ne j$.) By \eqref{eq:smallHausdorff}, for each $i$ there
    exists $y_i \in U_n$ such that $d_Z(\phi(x_i), \phi_n(y_i)) <
    \epsilon$. Moreover, 
    \[
        d_n(y_i, y_j) = d_Z(\phi_n(y_i), \phi_n(y_j))
        > d_Z(\phi(x_i), \phi(x_j)) - 2 \epsilon
        \ge t - 2 \epsilon.
    \]
    This shows that there are at least $L_t(U)$ open balls of $(U_n, d_n)$ of
    radius $t-2\epsilon$, that is,
    \[
        L_{t-2\epsilon}(U_n) \ge L_t(U).
    \]
    We deduce that \eqref{eq:smallHausdorff} implies that 
    \begin{equation} \label{eq:boundBalls}
        L_{t+2\epsilon}(U) \le L_t(U_n) \le L_{t-2\epsilon}(U).
    \end{equation}

    Let $t > 0$ be a continuity point of $(L_s(U), s > 0)$. We can find
    $\epsilon > 0$ such that $L_{t+2\epsilon}(U) = L_{t-2\epsilon}(U)$.
    Equation \eqref{eq:boundBalls} shows that, for $n$ large enough, 
    \[
        L_{t+2\epsilon}(U) = L_t(U_n) = L_{t-2\epsilon}(U),
    \]
    hence the first part of the result.

    If the process $(L_t(U), t > 0)$ only makes jumps of size $1$, the
    Skorohod convergence follows for instance from \cite[Chapter VI,
    Theorem~2.15]{Jacod2013}.
\end{proof}

\subsubsection{From Gromov-weak to GHP convergence}

In this last section, following \cite{Athreya2016}, we indicate how to
reinforce a convergence of ultrametric spaces in the Gromov-weak topology
to a convergence in the Gromov--Hausdorff-weak topology. (As discussed
earlier, in our context Gromov--Hausdorff-weak convergence will be
equivalent to the Gromov--Hausdorff--Prohorov one.)

In the general case of \cite{Athreya2016}, this relies on verifying that
the metric measure spaces fulfil the so-called lower mass-bound property,
see \cite[Definition~3.1]{Athreya2016}. For ultrametric spaces, since
balls form a partition of the space, this property turns out to have a
much simpler formulation.

\begin{prop} \label{prop:Gweak2GHP}
        Let $\big([U_n, d_n, \nu_n], \, n \ge 1\big)$ be a sequence
        of random compact ultrametric measure spaces such that
        $\supp \nu_n = U_n$. Suppose that it converges to a limit
        $[U,d,\nu]$ in distribution for the Gromov-weak topology. If, for
        all $t > 0$, the collection of random variables 
	\begin{equation} \label{eq:ballMass}
		\max \{ \nu_n(B)^{-1} : B \in B_t(U_n) \}, \quad n \ge 1
	\end{equation}
        is tight, then the sequence also converges in distribution for the
        Gromov--Hausdorff--Prohorov topology.
\end{prop}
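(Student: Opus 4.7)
The plan is to reduce the statement to the main abstract criterion of \cite{Athreya2016}: a sequence of random compact metric measure spaces converges in the Gromov--Hausdorff-weak topology if and only if it converges in the Gromov-weak topology and satisfies the \emph{lower mass-bound property}, namely that, for every $t > 0$, the family
\[
m_t\bigl([U_n,d_n,\nu_n]\bigr)^{-1} \coloneqq \Bigl( \inf_{x \in U_n} \nu_n\bigl(B_{x,t}(U_n)\bigr) \Bigr)^{-1},\quad n \ge 1,
\]
is tight. Once the Gromov--Hausdorff-weak convergence is established, the full-support hypothesis $\supp \nu_n = U_n$ (which also transfers to the limit, since a representative of $[U,d,\nu]$ in the Gromov-weak class can always be taken to be its support) combined with the equivalence recalled at the beginning of Section~\ref{sec:GHP} delivers the desired Gromov--Hausdorff--Prohorov convergence.

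Verifying the lower mass-bound property is the key step, and it is where the ultrametric structure intervenes crucially. Because open balls of a fixed radius $t$ form a partition of an ultrametric space, and since compactness forces $B_t(U_n)$ to be finite, for any $x \in U_n$ the ball $B_{x,t}(U_n)$ is simply the unique element of $B_t(U_n)$ containing $x$. Consequently
\[
\inf_{x \in U_n} \nu_n\bigl(B_{x,t}(U_n)\bigr) = \min\bigl\{ \nu_n(B) : B \in B_t(U_n) \bigr\},
\]
so that $m_t([U_n,d_n,\nu_n])^{-1}$ coincides exactly with the quantity $\max\{\nu_n(B)^{-1} : B \in B_t(U_n)\}$ appearing in \eqref{eq:ballMass}. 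The tightness assumed in the statement is thus literally the lower mass-bound condition of \cite{Athreya2016}.

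The main obstacle is not a computational one but rather the careful book-keeping required to line up the framework of \cite{Athreya2016} with ours: their result is naturally phrased for the Gromov--Hausdorff-weak topology and compact metric measure spaces, and one must then pass from Gromov--Hausdorff-weak to Gromov--Hausdorff--Prohorov convergence. This last step is precisely where the full-support hypothesis is used, via the equivalence between these two topologies on metric measure spaces with full support. No further difficulty is anticipated, since the content of the proposition lies essentially in the elegant way ultrametricity converts the pointwise infimum over $U_n$ into a minimum over the finite set $B_t(U_n)$ of balls.
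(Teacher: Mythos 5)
Your proposal is essentially the paper's own argument. The key observation is the same: for ultrametric spaces, the open balls of radius $t$ partition the space, so
\[
\inf_{x\in U_n}\nu_n\bigl(B_{x,t}(U_n)\bigr) \;=\; \min\bigl\{\nu_n(B):B\in B_t(U_n)\bigr\},
\]
which makes \eqref{eq:ballMass} literally the reciprocal of the lower mass-bound functional of \cite{Athreya2016}; and the final step that upgrades Gromov--Hausdorff-weak convergence to GHP convergence via full support is also identical to the paper's.

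The one place where you move too fast: you quote \cite{Athreya2016} as supplying an ``if and only if'' criterion for \emph{distributional} convergence, whereas the theorem the paper actually cites (\cite[Theorem~6.1]{Athreya2016}) is the deterministic characterization. The paper bridges the gap from deterministic to distributional with an explicit tightness argument: it constructs a Gromov--Hausdorff-weakly compact set $K'$ by intersecting a Gromov-weak Prohorov set $K$ with the events that every ball of radius $t_p$ has mass at least $\eta_p$ (for $t_p\downarrow 0$ and $\eta_p$ chosen via \eqref{eq:ballMass}), shows $K'$ is compact by applying the deterministic Theorem~6.1 along subsequences, and then concludes $\inf_n\PP\bigl([U_n,d_n,\nu_n]\in K'\bigr)\ge 1-2\epsilon$. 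If Athreya et al.\ do in fact state the distributional criterion you quote, your argument is complete as written; otherwise, this tightness construction is the step you still owe. The ultrametric reduction and the full-support bookkeeping are correct exactly as you stated them.
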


\begin{rem}
    In this result, $([U_n, d_n, \nu_n])_n$ should be interpreted as a
    random sequence of strong isometry classes such that the corresponding
    sequence of weak isometry classes converges in distribution in the
    Gromov-weak topology to $[U, d, \nu]$. The limit of $([U_n, d_n,
    \nu_n])_n$ in the GHP topology is the unique strong isometry class
    with full support whose weak isometry class is $[U, d, \nu]$.
\end{rem}

\begin{proof}[Proof of Proposition~\ref{prop:Gweak2GHP}]
    We first prove that the convergence holds in the Gromov--Hausdorff-weak 
    sense, for which it is sufficient to prove that the sequence
    $\big([U_n, d_n, \nu_n], \, n \ge 1\big)$ is tight in the
    Gromov--Hausdorff-weak topology. Fix a decreasing sequence $(t_p)_{p
    \ge 1}$ such that $t_p \to 0$. For any $\epsilon > 0$ and $p \ge 1$,
    using \eqref{eq:ballMass} we can find $\eta_p > 0$ such that
    \[
        \inf_{n \ge 1} \PP( \forall B \in B_{t_p}(U_n),\: \nu_n(B) \ge
        \eta_p ) \ge 1 - \epsilon 2^{-p}.
    \]
    By the convergence of $\big([U_n, d_n, \nu_n], n \ge 1\big)$, we can also find
    a compact set (for the Gromov-weak topology) such that 
    $\PP( [U_n,d_n,\nu_n] \in K ) \ge 1 - \epsilon$ for all $n \ge 1$.
    We claim that 
    \[
        K' \coloneqq K \cap \bigcap_{p \ge 1} \{ [U',d',\nu'] : 
            \forall B \in B_{t_p}(U'),\: \nu'(B) \ge \eta_p \}
    \]
    is compact in the Gromov--Hausdorff-weak topology. To see that, note
    that $K'$ satisfies the so-called global lower mass-bound property
    that
    \[
        \forall t > 0,\quad 
        \inf_{[U',d',\nu'] \in K'} \inf_{B \in B_t(U')} \nu'( B ) > 0,
    \]
    see \cite[Definition~3.1]{Athreya2016}. Since $K' \subseteq K$, any
    sequence in $K'$ admits a subsequence that converges Gromov-weakly
    and, according to \cite[Theorem~6.1]{Athreya2016}, this subsequence
    also converges Gromov--Hausdorff-weakly. This proves that $K'$ is
    Gromov--Hausdorff-weakly compact, and since 
    \[
        \inf_{n \ge 1} \PP( [U_n, d_n, \nu_n] \in K' ) \ge 1 - 2 \epsilon,
    \]
    we have shown that $\big([U_n, d_n, \nu_n], n \ge 1\big)$ is tight in the
    Gromov--Hausdorff-weak topology.

    There is a canonical injection $\iota$ from the set of weak isometry
    classes to that of strong isometry classes, which is obtained by
    choosing a representative with full support, see \cite[equation~(5.6)]{Athreya2016}. 
    Clearly, it is a homeomorphism between the Gromov--Hausdorff-weak
    topology and the GHP topology. Therefore, we have proved that 
    \[
        \iota([U_n, d_n, \nu_n]) \to \iota([U,d,\nu]), \qquad\text{as $n \to \infty$},
    \]
    in the GHP topology. Since $[U_n,d_n,\nu_n]$ has full support,
    $\iota([U_n, d_n, \nu_n]) = [U_n, d_n, \nu_n]$ and the result is
    proved.
\end{proof}

\section{The environmental coalescent point process}
\label{sec:CPP}
The limiting genealogy of a nearly critical BPVE is described by a
continuous tree called coalescent point process (CPP). This connection is
well-known for critical Galton--Watson processes, for which the limiting
object is the Brownian CPP constructed in \cite{Popovic2004}. As our main
theorem shows, a similar result holds for varying environments provided
that we consider a larger class of limiting CPPs.
In Section~\ref{sec:mainResults}, we introduced these objects as
time-changes of the Brownian CPP. We provide a more direct construction
here, and refer the reader to \cite{Popovic2004, Lambert2013,
Duchamps2018, Lambert2017} for more background on CPPs as well as for a
more careful construction of the underlying metric measure space.

Fix some height $t > 0$ and consider a Poisson point process $P$
on $[0,\infty) \times (0,t]$ with intensity measure $ds \otimes
\nu(dx)$, for some measure $\nu$ on $(0,t]$ such that
\begin{align}
	\forall \, x > 0, \quad  \nu((x, t])<\infty, \quad \nu((0, t])=\infty.
\end{align}
Define the distance $d_{\nu}$ such that
\begin{align} \label{eq:def_cpp}
	\forall \, x\leq y, \quad d_\nu(x,y) = \sup \{ z: (s,z)\in P , x \leq s \leq y \},
\end{align}
see Figure~\ref{fig:cpp_simulation} for an illustration. Consider 
a positive real $m$ and an independent exponentially distributed random variable $Z$ with mean $m$.
The CPP at height $t$ corresponding to $\nu$ and $m$ is the random metric
measure space $[(0,Z), d_\nu, \Leb]$. Its distribution will be denoted by 
$\CPP_t(\nu, m)$. Note that the Brownian CPP of
Section~\ref{sec:mainResults} has distribution
$\CPP_1(\tfrac{dx}{x^2}, 1)$.

\begin{rem}
    In the definition of a metric measure space of Section~\ref{sec:gromovWeakDef}, 
    we require the metric space to be complete and the measure to be defined
    on the corresponding Borel $\sigma$-field. Almost surely $[(0,Z),
    d_\nu]$ is \emph{not} complete, and we always work with its
    completion without further mention. The completion can be obtained
    explicitly by adding a countable number of points as described in
    \cite{Lambert2017Bis}. Moreover, the Lebesgue measure $\Leb$ is
    formally defined on the usual $\sigma$-field on the interval $(0,Z)$,
    which might be different from the Borel $\sigma$-field of $[(0,Z),
    d_\nu]$. The two $\sigma$-fields actually coincide. This issue
    has been addressed in details in \cite{Foutel2021}. For instance, 
    \cite[Lemma~1.4]{Foutel2021} shows that, if $\mathscr{I}$ is the
    usual Borel $\sigma$-field on $(0,Z)$ and $\mathscr{B}$ that induced
    by $d_\nu$, Point~(iii) of \cite[Definition~1.1]{Foutel2021} is
    fulfilled which entails that $\mathscr{I} \subseteq \mathscr{B}$ and 
    \[
        \{ y \in (0,Z) : d_\nu(x,y) < t \} \in \mathscr{I}, \quad t >
        0,\: x \in (0,Y).
    \]
    Since the topology induced by $d_\nu$ is separable, this shows that
    $\mathscr{I} = \mathscr{B}$. The Lebesgue measure is then implicitly
    extended to the completion of $[(0,Z), d_\nu]$ by giving null mass to
    the countable number of points that were added.
\end{rem}

The distribution of the rescaled genealogy of a nearly critical BPVE at
time $tN$ will converge to $\CPP_t(\nu_e, \bar{\rho}_t)$, with
$\bar{\rho}_t$ as in \eqref{eq:definition_rho} and
\begin{equation}\label{eq:env_cpp}
	\forall s \le t,\quad \nu_e( [s,t] ) = 
	\Big(\frac{1}{2}\int_{[t-s,t]} e^{X_t-X_u} \sigma^2(du)\Big)^{-1} 
	- \Big(\frac{1}{2}\int_{[0,t]} e^{X_t-X_u} \sigma^2(du)\Big)^{-1}.
\end{equation}
We will refer to this CPP as the \emph{environmental CPP}. As
will be shown in Proposition~\ref{prop:time_change_cpp}, this definition
agrees with the definition of the environmental CPP as a time-change of
the Brownian CPP given in Section~\ref{sec:mainResults}. Note that the
Brownian CPP also corresponds to the environmental CPP at height $1$ with
$X_t = 0$ and $\sigma^2(t) = \sigma^2 t$.

The rest of this section collects some properties of CPPs that are mostly
required in the proof of Corollary~\ref{cor:three_consequences}. These
properties are well-known for the Brownian CPP. The following result will
prove convenient to transfer these results from the Brownian CPP to any
environmental CPP.

\begin{prop} \label{prop:time_change_cpp}
	Let $[(0, Z_B), d_B, \Leb]$ be the Brownian CPP with distribution
	$\CPP_1\left(\tfrac{dx}{x^2}, 1\right)$. Recall the expression of
        $F$,
	\[
	\forall s \le t,\quad F(s) = \frac{1}{2\bar{\rho}_t} \int_{[t-s,t]}
	e^{X_t-X_u} \sigma^2(du),
	\]
        and let $F^{-1}$ be its right-continuous inverse. Then 
	$[(0, Z_B), F^{-1}(d_B), \bar{\rho}_t \Leb]$ is distributed as 
	$\CPP_t(\nu_e, \bar{\rho}_t)$, with $\nu_e$ defined in
	\eqref{eq:env_cpp}.
\end{prop}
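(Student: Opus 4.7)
The plan is to construct, on the same probability space, an explicit measure-preserving isometry between the rescaled Brownian CPP and a Poisson representation of $\CPP_t(\nu_e, \bar{\rho}_t)$ via two deterministic transformations of the underlying interval and of the Poisson point process $P$. Concretely, I would introduce the linear rescaling $\phi(x) = \bar{\rho}_t x$ of the interval $(0, Z_B)$, together with the transformation $T(s, z) = (\bar{\rho}_t s, F^{-1}(z))$ acting on the points of $P$ in $[0, \infty) \times (0, 1]$.

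The first step is to verify that $\phi$ maps $[(0, Z_B), F^{-1}(d_B), \bar{\rho}_t \Leb]$ onto a candidate space $[(0, \bar{\rho}_t Z_B), d', \Leb]$, where $d'$ is defined via \eqref{eq:def_cpp} from the image point process $T(P)$. This amounts to noting that $\phi_{\ast}(\bar{\rho}_t \Leb) = \Leb$ together with
\[
F^{-1}(d_B(x_1, x_2)) = \sup\bigl\{ F^{-1}(z) : (s, z) \in P,\ x_1 \le s \le x_2 \bigr\} = \sup\bigl\{ z' : (s', z') \in T(P),\ \phi(x_1) \le s' \le \phi(x_2) \bigr\},
\]
where the first identity uses that $F^{-1}$ is non-decreasing and that the supremum in the definition of $d_B$ is almost surely attained at a single atom of $P$.

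The second step is to identify the distribution of $(\bar{\rho}_t Z_B, T(P))$ with that of the pair defining $\CPP_t(\nu_e, \bar{\rho}_t)$. Clearly $\bar{\rho}_t Z_B$ is exponential of mean $\bar{\rho}_t$, and is independent of $T(P)$ since $Z_B$ is independent of $P$. It remains to compute the pushforward of $ds \otimes dz/z^2$ under $T$: the linear rescaling of $s$ produces a factor $1/\bar{\rho}_t$, while the image of $dz/z^2$ under $F^{-1}$ assigns mass $1/F(s) - 1$ to $(s, t]$, using the identity $F^{-1}(z) > s \iff z \ge F(s)$. Substituting the definitions in \eqref{eq:definition_rho} and \eqref{eq:env_cpp}, this coincides with $\bar{\rho}_t \nu_e((s, t])$, so the pushforward intensity is exactly $ds' \otimes \nu_e(dz')$, as required.

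The only delicate point, and hence the main obstacle, is handling possible discontinuities of $F$ and the corresponding flat stretches of $F^{-1}$: these reflect atoms of the integrator $e^{X_t - X_u}\sigma^2(du)$ appearing in \eqref{eq:definition_rho}, which are also responsible for the multiple mergers of the environmental CPP highlighted in Section~\ref{sec:mainResults}. The inversion identities between $F$ and $F^{-1}$ must be applied consistently for the right-continuous inverse, with atoms of $\nu_e$ matched to the jumps of $F^{-1}$ rather than of $F$; once this is checked on half-open intervals $(s, t]$, extension to all Borel sets is routine, and the identification of the two metric measure spaces as strong equivalence classes follows.
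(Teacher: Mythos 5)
Your proposal is correct and follows essentially the same route as the paper: push forward the atoms of $P$ by $F^{-1}$ to get a new Poisson process, identify its intensity measure via the generalized-inverse relation $\nu'((s,t]) = 1/F(s) - 1 = \bar{\rho}_t\,\nu_e((s,t])$, and conclude by a scaling argument. The only difference is that you make the final scaling step explicit via $\phi(x)=\bar\rho_t x$, whereas the paper dispatches it as a ``simple scaling argument''; also note that your stated equivalence $F^{-1}(z)>s \iff z\ge F(s)$ and the paper's $F^{-1}(u)>s \iff u>F(s)$ correspond to different inverse conventions, but since $dz/z^2$ is atomless this discrepancy does not affect the computed intensity.
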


\begin{proof}
	Let $P$ be the Poisson point process on $(0,\infty) \times (0,1)$ out
	of which the Brownian CPP is constructed in \eqref{eq:def_cpp}. We
	have 
	\begin{align*}
		\forall x < y \le Z,\quad F^{-1}(d_B(x,y)) 
		&= \sup \{ F^{-1}(z) : (s,z) \in P,\, x < s < y \} \\
		&= \sup \{ z' : (s,z') \in P',\, x < s < y \},
	\end{align*}
	where $P'$ is the point process obtained by scaling the second
	coordinate of $P$ according to $F^{-1}$, namely
	\[
	P' = \sum_{(s,z) \in P} \delta_{(s, F^{-1}(z))}.
	\]
	By the mapping theorem for Poisson point processes, $P'$ is again a
	Poisson point process on $(0,\infty) \times (0,t)$, with intensity
	$ds \otimes \nu'$, where $\nu'$ is the push-forward of $\tfrac{1}{x^2}
	d x$ on $(0,1)$ by $F^{-1}$. By standard properties of generalized
	inverses, $F^{-1}(u) > s \iff u > F(s)$ and 
	\[
	\nu'((s,t]) = \frac{1}{F(s)} - 1 
	= \frac{2\bar{\rho}_t}{\int_{[t-s,t]} e^{X_t-X_u} \sigma^2(du)}
	-1 = \bar{\rho}_t \nu_e((s,t]).
	\]
	Thus, $[(0,Z_B), F^{-1}(d_B), \bar{\rho}_t \mbox{Leb}]$ is the
        CPP at height $t$ corresponding to the measure
        $\bar{\rho}_t \nu_e$ and to the random population size $Z_B$. By
        a simple scaling argument, it is not hard to see that the
        distribution of $[(0,Z_B), F^{-1}(d_B), \bar{\rho}_t \Leb]$ is
        the same as that of $[(0,Z_e), d_{\nu_e}, \Leb]$, the CPP
        constructed from $\nu_e$ and $Z_e$. (Note that the total mass
        needs to be scaled so that the isometry between $[(0,Z_B),
        F^{-1}(d_B)]$ and $[(0,Z_e), d_{\nu_e}]$ is measure-preserving.)
\end{proof}

\subsection{Moments of the CPP}

For $k \ge 1$, let $(H_1, \dots, H_{k-1})$ be independent and with
cumulative distribution function $F$ defined in \eqref{eq:definition_rho}. 
As in \eqref{eq:definition_H_max}, define an array 
\begin{equation} \label{eq:definition_H_moment}
    \forall i \le j,\quad H_{i,j} = H_{j,i} = 
    \max \{ H_i, \dots, H_{j-1} \}.
\end{equation}
The moments of the environmental CPP can be computed from the law of this
array as follows.

\begin{prop} \label{Prop_Moments_CPP}
        Let $[(0,Z_e), d_{\nu_e}, \Leb]$ be the environmental CPP at time
        $t$. Then, for any bounded function $\phi$ with
        corresponding polynomial $\Phi$,
	\begin{align}
		\EE{\Phi( (0,Z_e), d_{\nu_e}, \Leb )} = \bar{\rho}_t^k k!\, \EE{\phi(
			(H_{\sigma_i,\sigma_j})_{i,j})},
	\end{align}
        where $\left((H_{i,j})_{i,j} \right)$ is given by
        \eqref{eq:definition_H_moment} and $\sigma$ is an independent
        and uniform permutation of $[k]$.
\end{prop}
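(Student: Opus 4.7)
My plan is to compute the $k$-th moment of the environmental CPP by unfolding $\Phi$ into a $k$-fold Lebesgue integral, reducing to sorted samples by exchangeability, and then exploiting the Poissonian structure of the Brownian CPP. Via Proposition~\ref{prop:time_change_cpp} I work with the representative $[(0,Z_B), F^{-1}\circ d_B, \bar{\rho}_t \Leb]$, where $Z_B$ is exponential with mean $1$ independent of the Poisson process $P$. Pulling out the factor $\bar{\rho}_t^k$ from the measure gives
\begin{equation}
    \E\big[\Phi\big([(0,Z_e), d_{\nu_e}, \Leb]\big)\big] = \bar{\rho}_t^k\, \E\!\left[\int_{(0,Z_B)^k} \phi\big((F^{-1}(d_B(x_i, x_j)))_{i,j}\big)\, dx_1 \cdots dx_k \right].
\end{equation}

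Next, I would symmetrize. Splitting the integrand over $(0,Z_B)^k$ according to the $k!$ orderings of $(x_1,\dots,x_k)$ and substituting $y_l = x_{\sigma(l)}$ in each piece, the right-hand side becomes $k!$ times the same integral over the sorted simplex $\{0 < y_1 < \cdots < y_k < Z_B\}$, with the arguments of $\phi$ replaced by $F^{-1}(d_B(y_{\sigma(i)}, y_{\sigma(j)}))$ for an independent uniform permutation $\sigma$ of $[k]$. For sorted $y_l$'s, ultrametricity of $d_B$ gives $d_B(y_i, y_j) = \max_{i \le l < j} D_l$ with $D_l \coloneqq d_B(y_l, y_{l+1})$; since $F^{-1}$ is non-decreasing it commutes with maxima, so setting $H_l \coloneqq F^{-1}(D_l)$ yields $F^{-1}(d_B(y_i, y_j)) = \max_{i \le l < j} H_l$, which is exactly the $H_{i,j}$ of the statement.

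It remains to identify the joint law of $(H_l)_{l=1}^{k-1}$ after integrating out $Z_B$ and the $y_l$'s. Changing variables to the spacings $\Delta_0 = y_1$, $\Delta_l = y_{l+1}-y_l$ for $1 \le l \le k-1$, and $\Delta_k = Z_B - y_k$, the density of $Z_B$ factorizes as $\prod_{l=0}^{k} e^{-\Delta_l}$ and the $\Delta_0$ and $\Delta_k$ integrals each contribute $1$. By independence of the Poisson process on disjoint strips, the $(D_l)_{l=1}^{k-1}$ are conditionally independent given $(\Delta_l)_{l=1}^{k-1}$ with $\PP(D_l \le z \mid \Delta_l) = e^{-\Delta_l(1/z-1)}$ on $(0,1]$, and the elementary computation $\int_0^\infty e^{-\Delta_l}\, e^{-\Delta_l(1/z-1)}\, d\Delta_l = z$ shows that after marginalizing the spacings the $D_l$ are iid uniform on $(0,1]$. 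By the probability integral transform the $H_l = F^{-1}(D_l)$ are then iid with CDF $F$, matching \eqref{eq:definition_H_max}, and reassembling gives the claim.

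The only mildly non-trivial ingredient is the decoupling step: the exponential law of $Z_B$ is precisely what is needed to combine with the spacing measure and turn the conditional distribution of $(D_l)$ into an unconditional iid uniform law. Everything else is symmetrization and the commutation of $F^{-1}$ with maxima.
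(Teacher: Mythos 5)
Your proof is correct and takes a genuinely different route from the paper. The paper dispatches this proposition in a few lines by invoking Proposition~4 of \cite{FoutelSchertzer22} as a black box: it reconciles the two CPP conventions (the reference defines $\nu$ on the whole line with $Z$ determined by the first atom above level $t$, whereas here $Z$ is independent) and then verifies by a one-line computation that $\PP(H \le s) = \frac{1/\bar{\rho}_t}{\nu_e((s,t])+1/\bar{\rho}_t}$. You instead rederive the moment formula from scratch by unfolding the polynomial, symmetrizing over orderings, and exploiting the explicit Poissonian construction of the CPP via Proposition~\ref{prop:time_change_cpp}. The key point you make transparent, which the paper's citation hides, is \emph{why} the consecutive pairwise distances are i.i.d.\ with CDF $F$: the order statistics of a uniform sample on an interval of independent exponential length have i.i.d.\ exponential spacings, the Poisson process is independent over disjoint vertical strips, and integrating $\PP(D_l \le z \mid \Delta_l)=e^{-\Delta_l(1/z-1)}$ against $e^{-\Delta_l}d\Delta_l$ collapses to a uniform marginal, so $H_l = F^{-1}(D_l)$ has CDF $F$ by the probability integral transform. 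This Poissonization device (exponential total length producing i.i.d.\ spacings) is of course the engine inside the cited lemma and reappears in Section~\ref{sec:CPPsample} of the paper; your proof buys self-containedness and transparency at the cost of reproving the black box. All the steps check out: the symmetrization to a sorted simplex with an independent uniform permutation, the identity $d_B(y_i,y_j)=\max_{i\le l<j}D_l$ from the sup-over-disjoint-intervals form of the CPP distance, the commutation of the non-decreasing $F^{-1}$ with maxima, and the Jacobian-one change of variables to spacings together with the factorization of the $\mathrm{Exp}(1)$ density are all correct.
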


\begin{proof}
	We will apply \cite[Proposition~4]{FoutelSchertzer22}. The CPP
	construction in \cite[Section~4.3]{FoutelSchertzer22} is slightly
	different from that presented here: the measure $\nu$ is defined on the
	whole real line and $Z$ is defined as the first atom of $P$ above level $t$.
	Obviously, the two constructions are equivalent as long as $\E[Z] =
	1/\nu((t, \infty))$. Therefore, using \cite[Proposition~4]{FoutelSchertzer22}
	with $\nu((t,\infty)) = 1/\bar{\rho}_t$, the result is proved if we show that 
	\[
	\forall s \le t,\quad \PP(H \le s) = 
        \frac{1/\bar{\rho}_t}{\nu_e((s,t])+1/\bar{\rho}_t}.
	\]
	This follows from the simple computation
	\[
	(\bar{\rho}_t \nu_e((s,t]) + 1)^{-1} 
        = \frac{1}{2\bar{\rho}_t} \int_{[t-s,t]} e^{X_t-X_u} \sigma^2(du).
        \qedhere
	\]
\end{proof}

\subsection{Sampling from the CPP} \label{sec:CPPsample}

In this section we are interested in the distribution of the subtree
spanned by $k$ individuals chosen uniformly from an environmental CPP.
In the context of the CPP, choosing these individuals amounts to sampling
$k$ points $(U_1,\dots, U_k)$ uniformly on the interval $(0, Z_e)$, which
splits $(0, Z_e)$ into $k+1$ subintervals. The pairwise distances between
$(U_1, \dots, U_k)$ are then connected to the maximum of the point
process $P$ used in the construction of $d_{\nu_e}$ on these $k+1$
subintervals. Computing the joint distribution of these maxima is made
difficult by the fact that the lengths of these subintervals are not
independent.

This problem would be much easier if the points were sampled according to
an independent Poisson point process with intensity $\theta / \bar{\rho}_t dt$.
The atoms of such a point process split $(0, Z_e)$ into a geometric
number of subintervals with success parameter $1/(1+\theta)$, and the
lengths of these intervals are independent exponential random variables
with mean $\bar{\rho}_t / (1+\theta)$. Thus, the distances between any two
consecutive atoms are independent, and distributed as the maximum of $P$
over an interval with exponentially distributed length and
mean $\bar{\rho}_t / (1+\theta)$. If $H^{\theta}$ denotes a random
variable with the same distribution as this maximum, a direct computation
shows that 
\begin{equation} \label{eq:maxCPPExp}
	\forall s \le t,\quad \PP( H^{\theta} \le s ) 
        = \frac{(1+\theta) F(s)}{1+\theta F(s)},
\end{equation}
where $F$ is defined in \eqref{eq:definition_rho}, 
and corresponds to the cumulative distribution function of the maximum of
the point process $P$ over the whole interval $(0, Z_e)$.
It turns out that we can express the distribution of the pairwise
distances of $k$ points sampled uniformly from the CPP as a mixture of
that for the Poisson sampling. This can be seen as an instance of the
Poissonization method developed in \cite{Lambert2018, Johnston2019B}.

\begin{prop} \label{prop:sample_cpp_unbiased}
	For $k \ge 1$, let $(U_1,\dots, U_k)$ be uniformly distributed on
	$(0, Z_e)$. Then
	\begin{equation} \label{eq:moment_mixture}
		\E\big[ \phi\big( (d_{\nu_e}(U_i,U_j))_{i,j}\big) \big]
		= k\int_0^\infty \frac{1}{(1+\theta)^2}
		\Big(\frac{\theta}{1+\theta}\Big)^{k-1}
		\E \big[\phi\big( (H^{\theta}_{\sigma_i,\sigma_j})_{i,j} \big)\big]
		d\theta,
	\end{equation}
	where $\sigma$ is a uniform permutation of $[k]$ and $\big((H^{\theta}_{i,j})_{i,j}\big)$ as in \eqref{eq:definition_H_theta}.
\end{prop}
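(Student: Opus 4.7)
The plan is to realise the iid uniform samples $(U_1, \ldots, U_k)$ on $(0, Z_e)$ as a mixture, parameterised by an auxiliary variable $\Theta$, of ordered atoms of a process whose spacings are iid exponentials. The mixing density
\[
g(\theta) \coloneqq \frac{k}{(1+\theta)^2}\Big(\frac{\theta}{1+\theta}\Big)^{k-1},\qquad \theta > 0,
\]
is precisely the one appearing in the statement, and integrates to $1$ via the change of variable $u = \theta/(1+\theta)$. First I would introduce $\Theta \sim g$ and, conditionally on $\Theta = \theta$, iid exponentials $L_0, \ldots, L_k$ of mean $\bar{\rho}_t/(1+\theta)$, all independent of the Poisson point process $P$ governing $d_{\nu_e}$. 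Set $V_i \coloneqq L_0 + \cdots + L_{i-1}$ for $i = 1, \ldots, k+1$ and $Z'_e \coloneqq V_{k+1}$.

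The first step is to check that $Z'_e$ is exponential with mean $\bar{\rho}_t$: conditionally on $\Theta = \theta$, $Z'_e$ is Gamma$(k+1,(1+\theta)/\bar{\rho}_t)$, and a direct integration of this density against $g$ collapses to $\bar{\rho}_t^{-1} e^{-z/\bar{\rho}_t}$, so $Z'_e \stackrel{d}{=} Z_e$. The second step is to observe that, conditionally on $\Theta = \theta$ and $Z'_e = z$, the vector $(V_1, \ldots, V_k)$ has the law of the order statistics of $k$ iid uniforms on $(0, z)$, by the classical representation of uniform spacings as normalised iid exponentials; crucially this conditional law does not depend on $\theta$. Combining the two steps, if $\sigma$ is a uniform random permutation of $[k]$ independent of everything else, then $(V_{\sigma_1}, \ldots, V_{\sigma_k})$ has the same joint law with $P$ as $(U_1, \ldots, U_k)$, so
\[
\E\big[ \phi\big( (d_{\nu_e}(U_i, U_j))_{i,j} \big) \big]
= \E\big[ \phi\big( (d_{\nu_e}(V_{\sigma_i}, V_{\sigma_j}))_{i,j} \big) \big].
\]

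To conclude I would condition the right-hand side on $\Theta = \theta$. The $k-1$ interior gaps $V_{i+1} - V_i$, $1 \le i \le k-1$, are iid exponential of mean $\bar{\rho}_t/(1+\theta)$, and since $P$ is a Poisson process on $[0,\infty)\times(0,t]$ stationary in its first coordinate and independent over disjoint strips, the quantities $d_{\nu_e}(V_i, V_{i+1})$ are conditionally iid with the law of $H^{\theta}$ defined in \eqref{eq:maxCPPExp}, once the random gap length is integrated out. The ultrametric identity $d_{\nu_e}(V_i, V_j) = \max_{i \le l < j} d_{\nu_e}(V_l, V_{l+1})$ then yields $(d_{\nu_e}(V_i, V_j))_{i,j} \stackrel{d}{=} (H^{\theta}_{i,j})_{i,j}$ conditionally on $\Theta = \theta$, with the right-hand matrix assembled as in \eqref{eq:definition_H_max}. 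Integrating over $\Theta$ with density $g$ produces the announced mixture formula. The only step that really requires insight is the choice of $g$: it is forced by the constraint that the marginal law of $V_{k+1}$ be the correct exponential, and once this is identified the remainder is essentially bookkeeping combined with the exchangeability of iid uniforms under random relabelling.
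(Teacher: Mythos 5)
Your proof is correct and follows essentially the same route as the paper: both hinge on writing the exponential law of $Z_e$ as the mixture of Gamma$(k+1,(1+\theta)/\bar{\rho}_t)$ against the density $g$ (the content of Lemma~\ref{lem:expGamma}), so that after conditioning on $\theta$ the spacings of the sampled points become i.i.d.\ exponentials of mean $\bar{\rho}_t/(1+\theta)$, the gap-maxima become i.i.d.\ with law $H^\theta$, and the ultrametric property assembles the matrix. The only cosmetic difference is the direction of the argument --- you build the points forward from exponential spacings and verify the correct marginal, whereas the paper starts from $U_i = V_i Z_e$ and applies the mixture identity to the expectation --- but the underlying computation is identical.
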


The key point in our proof is the observation that the exponential
distribution can be expressed as a mixture of Gamma distribution.

\begin{lem} \label{lem:expGamma}
	Fix some $k \ge 2$, then
	\[
	\forall x \ge 0,\quad e^{-x}
	= k \int_0^\infty 
	\frac{1}{(1+\theta)^2} \Big(\frac{\theta}{1+\theta}\Big)^{k-1}
	f_{k+1, \theta+1}(x) d \theta,
	\]
	where $f_{k,\theta}(x)$ is the density of a $\mathrm{Gamma}(k, \theta)$
	random variable, that is,
	\[
	\forall x \ge 0,\quad f_{k,\theta}(x) = \frac{\theta^k x^{k-1}}{(k-1)!} e^{-x \theta}.
	\]
\end{lem}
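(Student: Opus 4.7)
The plan is to proceed by a direct computation: substitute the explicit density
\[
f_{k+1,\theta+1}(x) = \frac{(\theta+1)^{k+1} x^k}{k!}\, e^{-x(\theta+1)}
\]
into the integrand on the right-hand side and simplify.

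After substitution, the integrand becomes
\[
k \cdot \frac{1}{(1+\theta)^2}\cdot\frac{\theta^{k-1}}{(1+\theta)^{k-1}} \cdot \frac{(\theta+1)^{k+1} x^k}{k!}\, e^{-x(\theta+1)}
= \frac{\theta^{k-1} x^k}{(k-1)!}\, e^{-x(\theta+1)},
\]
where the key observation is that the two factors $(1+\theta)^2 (1+\theta)^{k-1}$ in the denominator cancel exactly against $(\theta+1)^{k+1}$ coming from the Gamma density, and the factor $k$ cancels with $k!$ in the denominator to leave $(k-1)!$.

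Factoring out $e^{-x}$ reduces the right-hand side to
\[
e^{-x}\cdot\frac{x^k}{(k-1)!} \int_0^\infty \theta^{k-1} e^{-x\theta}\, d\theta,
\]
and the remaining integral is the standard Gamma identity $\int_0^\infty \theta^{k-1} e^{-x\theta} d\theta = (k-1)!/x^k$, which yields exactly $e^{-x}$. There is no real obstacle here; the only thing to verify is that the algebraic simplification of the $(1+\theta)$-powers comes out correctly, which it does because the exponents $-2 - (k-1) + (k+1) = 0$ balance. The identity can also be recognized conceptually as the statement that mixing a $\mathrm{Gamma}(k+1,\theta+1)$ density against the size-biased measure $k\,\theta^{k-1}(1+\theta)^{-k-1} d\theta$ on $(0,\infty)$ produces a standard exponential, which will match the Poissonization interpretation used immediately afterward in Proposition~\ref{prop:sample_cpp_unbiased}.
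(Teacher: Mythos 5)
Your computation is correct and is exactly the paper's proof: substitute the Gamma density, cancel the powers of $(1+\theta)$ and the factorial, and evaluate the remaining Gamma integral. The concluding interpretive remark is a nice gloss but the argument is the same direct calculation.
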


\begin{proof}
	The result follows by a direct computation, 
	\begin{align}
		\int_0^\infty 
		\frac{k}{(1+\theta)^2} \Big(\frac{\theta}{1+\theta}\Big)^{k-1}
		f_{k+1, \theta+1}(x) d \theta
		&= \int_0^\infty 
		\frac{k}{(1+\theta)^2} \Big(\frac{\theta}{1+\theta}\Big)^{k-1}
		\frac{(\theta+1)^{k+1} x^{k}}{k!} e^{-x (\theta+1)}
		d \theta \\
		&= \int_0^\infty 
		\frac{\theta^{k-1} x^{k}}{(k-1)!} e^{-x (\theta+1)}
		d \theta
		= e^{-x}. \qedhere
	\end{align}
\end{proof}

\begin{rem}
	The previous result can be extended to geometric and negative
	binomial distributions, by noting that these distributions are
	mixtures of a Poisson distribution by an exponential and Gamma
	distribution respectively. This can in turn be used to compute the
	distribution of $k$ points sampled uniformly from a discrete
	coalescent point process. In particular this provides an alternative
	proof to the main result of \cite{Lambert2018}, see Theorem~3.
\end{rem}

\begin{proof}[Proof of Proposition~\ref{prop:sample_cpp_unbiased}]
	Let $[(0,Z_e), d_{\nu_e}, \Leb]$ have distribution $\CPP_t(\nu_e, \bar{\rho}_t)$, 
	let $(V_1,\dots,V_k)$ be i.i.d.\ uniform random variables on
	$(0,1)$, and define $U_i = V_i Z_e$, $i \le k$. Since $Z_e$,
	$d_{\nu_e}$, and $(V_i)_i$ are independent, using
	Lemma~\ref{lem:expGamma} we can write
	\[
	\E\big[ \phi\big( (d_{\nu_e}(U_i, U_j))_{i,j} \big) \big]
	= k \int_0^\infty 
	\frac{1}{(1+\theta)^2} \Big(\frac{\theta}{1+\theta}\Big)^{k-1}
	\E\big[ \phi\big( (d_{\nu_e}(V_i Z_e, V_j Z_e))_{i,j}\big)\big]
	d \theta,
	\]
	where $Z_e$ has the $\mathrm{Gamma}(k+1, (\theta+1)/\bar{\rho}_t)$
	distribution and all other random variables are independent.
	The variables $(U_1, \dots, U_k)$ split $(0, Z_e)$ into $k+1$
        subintervals. By well-known properties of Gamma distributions,
        the lengths of these subintervals are independent exponential
        random variables with mean $\bar{\rho}_t/(\theta+1)$. Let
        $(U^*_1, \dots, U^*_k)$ be the order statistics of
        $(U_1,\dots,U_k)$, and define
	\[
	\forall i < k,\quad H^{\theta}_i = \sup \{ x : (s, x) \in P \text{ and } s \in [U^*_i, U^*_{i+1}] \}.
	\]
	According to the previous discussion, the random variables
        $(H^{\theta}_1,\dots, H^{\theta}_{k-1})$ are i.i.d.\ and
        distributed as the maximum of the point process $P$ over an
        interval with exponentially distributed length and mean
        $\bar{\rho}_t / (1+\theta)$. The distribution of this maximum is 
	given by \eqref{eq:maxCPPExp}. By construction of the CPP distance, 
	\[
	\forall i < j,\quad d_{\nu_e}(U^*_i, U^*_j) = H^{\theta}_{i,j} \coloneqq
	\max \{ H^{\theta}_i, \dots, H^{\theta}_{j-1} \}.
	\]
	Thus,
	\[
	\E\big[ \phi\big( (d_{\nu_e}(U^*_i, U^*_j))_{i,j}\big)\big]
	= \E\big[ \phi\big( (H^{\theta}_{i,j})_{i,j} \big) \big]
	\]
	and the result is proved by noting that the unique partition $\sigma$
	of $[k]$ such that 
	\[
	\forall i \le k,\quad U^*_i = U_{\sigma_i},
	\]
	is uniform and independent of all other variables.
\end{proof}

\subsection{The reduced process of a CPP}

Recall the definition of the reduced process $(L_s, 0 < s < t)$ of an
ultrametric tree from Section~\ref{sec:ultrametric}. The process $L_s$
counts the number of ancestral lineages at time $s$, starting from the
leaves at $s=0$ and going towards the root at $s=t$. We give a
characterization of the reduced process associated with the
environmental CPP. It is usual to express the reduced process of a
branching process as a time-changed pure birth process, also known as a
Yule process. 

\begin{prop} \label{prop:reduced_process_cpp}
	Let $(L_s, 0 < s \le t)$ be the reduced process associated
        with the environmental CPP $[(0,Z_e), d_{\nu_e}, \Leb]$. Then
	\begin{equation} \label{eq:reduced_process}
		(L_{t-s}, s < t) \overset{(d)}{=} 
		\left( Y\Big(\log \frac{\int_{[0,t]} e^{-X_u}
			\sigma^2(du)}{\int_{[s,t]} e^{-X_u} \sigma^2(du)} \Big), s < t \right),
	\end{equation}
	for a standard Yule process $(Y(s), s \ge 0)$.
\end{prop}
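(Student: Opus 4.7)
The plan is to reduce the statement to a distributional identity for the Brownian CPP alone, and then to prove that identity by recognising the reduced process of the Brownian CPP as a Yule process under an exponential time-change.

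By Proposition~\ref{prop:time_change_cpp}, the environmental CPP is distributed as $[(0,Z_B), F^{-1}(d_B), \bar{\rho}_t \Leb]$, so for $s$ at which $F$ is continuous the balls of $d_{\nu_e}$ of radius $s$ coincide with the balls of $d_B$ of radius $F(s)$; since only countably many heights are problematic, this is enough to obtain the identity $L_{t-s}^{\mathrm{env}} = L_{F(t-s)}^{B}$ at the level of finite-dimensional distributions (where $L^B$ denotes the reduced process of the Brownian CPP). A direct calculation with the definition of $F$ gives
\[
-\log F(t-s) \;=\; \log\frac{\int_{[0,t]} e^{-X_u}\sigma^2(du)}{\int_{[s,t]} e^{-X_u}\sigma^2(du)},
\]
which is exactly the time-change that appears inside $Y$ in~\eqref{eq:reduced_process}. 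Therefore the proposition will follow as soon as we show the distributional identity $(L_{e^{-u}}^{B}, u \ge 0) \overset{(d)}{=} (Y(u), u \ge 0)$ for a standard Yule process $Y$ started at $1$.

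To prove this identity the plan is to check that $(L_{e^{-u}}^B, u \ge 0)$ is a Markov process in its own filtration starting from $1$ and with the transition rates of a standard Yule process. The starting value $L_1^B=1$ is immediate since the intensity of $P$ has no mass at height $1$. Conditionally on $Z_B$, the number $L_{e^{-u}}^B - 1$ of atoms of $P$ above height $e^{-u}$ is Poisson with parameter $Z_B(e^u - 1)$; combining this with the prior $Z_B \sim \mathrm{Exp}(1)$, a short Bayesian computation gives that, conditionally on $L_{e^{-u}}^B = n$, the posterior of $Z_B$ is $\mathrm{Gamma}(n, e^u)$. Given $Z_B$, the waiting time $\tau$ until the next atom with height below $e^{-u}$ satisfies $\PP(\tau > v \mid Z_B) = \exp(-Z_B(e^{u+v}-e^u))$, so integrating against the Gamma posterior yields
\[
\PP(\tau > v \mid L_{e^{-u}}^B = n) = e^{-nv},
\]
which is exactly the Yule holding-time law with $n$ lineages.

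The main conceptual step to be careful about is the Markov property of $(L_{e^{-u}}^B, u \ge 0)$ \emph{in its own filtration}, not just in that generated by $Z_B$. One must check that the heights of the atoms of $P$ above $e^{-u}$ -- equivalently, the jump times of $L^B$ up to $u$ -- bring no extra information on $Z_B$ beyond the count $L_{e^{-u}}^B$; this follows because conditionally on their number those heights are i.i.d.\ from a distribution independent of $Z_B$, so that the Gamma posterior above is already the posterior with respect to the full trajectory of $L^B$ on $[0,u]$. Once this is in place, the transition computation identifies $(L_{e^{-u}}^B, u \ge 0)$ as a standard Yule process, and composing with the time-change $u = \log \bigl(\int_{[0,t]} e^{-X_u}\sigma^2(du) / \int_{[s,t]} e^{-X_u}\sigma^2(du)\bigr)$ gives the identity claimed in~\eqref{eq:reduced_process}.
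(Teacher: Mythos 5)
Your proposal follows the paper's reduction step verbatim: both appeal to Proposition~\ref{prop:time_change_cpp} to realise the environmental CPP as the time-changed Brownian CPP, identify the two reduced processes through $L_s = L'_{F(s)}$, and compute $-\log F(t-s)$ to produce the claimed time-change inside the Yule process. Where you diverge is the Brownian ingredient itself: the paper simply cites the known identity $(L'_{1-s}, s<1) \overset{(d)}{=} (Y(\log\tfrac{1}{1-s}), s<1)$ from \cite[Lemma~5.5]{Duchamps2018} or \cite[Theorem~2.3]{OConnell1995}, whereas you give a self-contained derivation. Your argument is correct: conditionally on $Z_B$ the count of atoms above level $e^{-u}$ is Poisson with mean $Z_B(e^u-1)$; Bayes with the $\mathrm{Exp}(1)$ prior gives a $\mathrm{Gamma}(n, e^u)$ posterior given $L^B_{e^{-u}}=n$; and integrating the conditional survival probability $\exp(-Z_B(e^{u+v}-e^u))$ against this posterior gives $e^{-nv}$, the Yule holding-time. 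Your remark about the Markov property in the natural filtration is the one genuinely delicate point, and you handle it correctly: conditionally on their number, the heights of the atoms above $e^{-u}$ are i.i.d.\ with a law not depending on $Z_B$, so the posterior of $Z_B$ given the whole path of $L^B$ up to time $u$ depends on that path only through the count $L^B_{e^{-u}}$. In short, your route trades the paper's citation for a clean elementary Poisson--Gamma conjugacy argument; this makes the proof longer but fully self-contained, at the cost of having to verify the Markov property by hand, which the cited references already package.
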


\begin{proof}
	Consider a Brownian CPP, that is the random ultrametric space
	with distribution $\CPP_1(\tfrac{dx}{x^2}, 1)$, denoted by
        $[(0,Z_B), d_B, \Leb]$, with corresponding reduced process
	$(L'_s, s \le 1)$. It is well-known that
	\[
	(L'_{1-s}, s < 1) \overset{(d)}{=} 
	\big( Y\big( \log \tfrac{1}{1-s} \big), s < 1 \big),
	\]
	where $(Y(u), u \ge 0)$ is a Yule process, see for instance
	\cite[Lemma~5.5]{Duchamps2018} or \cite[Theorem~2.3]{OConnell1995}. By
	Proposition~\ref{prop:time_change_cpp}, $[(0, Z_B), F^{-1}(d_B),
	\bar{\rho}_t \Leb]$ is distributed as the environmental CPP
	\eqref{eq:env_cpp}. Since the identity $F^{-1}(d(x,y)) < t \iff
	d(x,y) < F(t)$ holds, the reduced process $(L_s, 0 <s \le t)$ of
	$[(0, Z_B), F^{-1}(d_B), \bar{\rho}_t \Leb]$ is 
	\[
	\forall s \le t,\quad L_s = L'_{F(s)},
	\]
        and since $L_{t-s}=L'_{F(t-s)} \overset{(d)}{=} Y(- \log F(t-s))$
        the result follows by noting that 
	\[
	\forall s \le t,\quad \frac{1}{F(t-s)} = 
	\frac{2\bar{\rho}_t}{\int_{[s, t]} e^{X_t-X_u} \sigma^2(du)}.
        \qedhere
	\] 
\end{proof}

\section{Spinal decompositions}
\label{sec:spinalDecompositions}

\subsection{Spinal decomposition and \texorpdfstring{$1$-spine}{1-spine}} 
\label{sec:1spine}

The proof of the Kolmogorov estimate (Theorem~\ref{thm:kolmogorov}) uses ideas of \cite{Lyons95} adapted to the case of a varying	environment. It relies on expressing the distribution of the whole tree structure $T$ of the branching process in terms of a distinguished lineage (the $1$-spine) on which subtrees are grafted. We refer to this type of result as a spinal decomposition, see for instance \cite{shi_2015}. A spinal decomposition for BPVEs has already been proposed for instance in \cite[Section~1.4.2]{Kersting2017Book}. Let us briefly recall it here and introduce some notation.

The spine is a line of individuals, which we think of as a set of
\emph{marked} individuals. We construct a tree $T$ with one marked
individual $v_n$ at each generation $n$. Start the population from a
single marked individual $v_0 = \emptyset$. At generation $n$, let the
marked individual have a number of offspring distributed as a size-biased
realization of $f_{n+1}$. That is, the individual on the spine has a number $\xi^*_{n+1}$
of children, with
\[
\PP(\xi^*_{n+1} = k) = \frac{k f_{n+1}[k]}{f'_{n+1}(1)}.
\]
Pick one child of the marked individual uniformly at random and mark it,
let all other children be unmarked. The other individuals at generation
$n$ reproduce independently according to the offspring distribution
$f_{n+1}$. We denote by $\QQ^*$ the distribution of the resulting tree
$T$ with the set of marked individuals $(\emptyset,v_1, v_2, \dots)$, and by $\QQ$ the
distribution of $T$. (That is, $\QQ$ is the projection of $\QQ^*$ on the
space of trees, with no distinguished vertices.) The importance of the
measure $\QQ^*$ comes from the following well-known connection between
$\QQ^*$ and $\PP$.

\begin{lem}\label{lem:1spine}
	Let $(T, (v_0, v_1, \dots))$ be distributed as $\QQ^*$. Conditional
	on the first $n$ generations of $T$, $v_n$ is uniformly distributed
	among individuals alive at generation $n$. Moreover $\QQ_n \ll \PP_n$ and 
	\[
	\forall n \ge 0,\quad \frac{d \QQ_n}{d \PP_n} = \frac{Z_n}{\mu_n},
	\]
	where $\QQ_n$ (resp.\ $\PP_n$) refers to the restriction of $\QQ$
	(resp.\ $\PP$) to the first $n$ generations.
\end{lem}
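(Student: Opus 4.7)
The plan is to compute explicitly the joint distribution under $\QQ^*$ of the truncated tree $T_n$ together with the spine $(v_0,\dots,v_n)$, and then derive both conclusions by marginalising and conditioning respectively. Since the spine and the tree are built jointly generation by generation, I will reason inductively on the generations, multiplying the contributions from the size-biased reproduction of the spine individual, the uniform choice of the next spine, and the independent ordinary reproductions of the non-spine individuals.

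Concretely, for a fixed realisation of $(T_n, v_0,\dots,v_n)$ in which each $u \in T$ with $|u|<n$ has $K_u$ children, the construction of $\QQ^*$ assigns at generation $k$ the weight $\frac{K_{v_{k-1}} f_k[K_{v_{k-1}}]}{f'_k(1)} \cdot \frac{1}{K_{v_{k-1}}}$ to the spine (size-bias times uniform pick among the children) and the weight $f_k[K_u]$ to every other ancestor $u$ with $|u|=k-1$. The crucial observation is the cancellation of $K_{v_{k-1}}$, which leaves exactly $f_k[K_{v_{k-1}}]/f'_k(1)$. Taking the product over $k=1,\dots,n$ and factoring out the means collapses the expression into
\[
    \QQ^*\bigl( (T_n, v_0,\dots,v_n) \bigr)
    = \frac{1}{\mu_n} \prod_{u:\, |u| < n} f_{|u|+1}[K_u]
    = \frac{1}{\mu_n} \PP(T_n).
\]
Note that the right-hand side depends on the spine only through the requirement that $v_n \in T_n$ (the ancestors are then forced), so under $\QQ^*$ the spine $v_n$ is, conditional on $T_n$, uniform over the $Z_n$ individuals of the $n$-th generation, which is the first claim.

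To obtain the Radon--Nikodym derivative, I sum the above identity over the $Z_n$ possible choices of $v_n$ (each determining a unique ancestral line $v_0,\dots,v_n$), which yields $\QQ(T_n) = (Z_n/\mu_n)\, \PP(T_n)$ and hence $d\QQ_n / d\PP_n = Z_n/\mu_n$ as claimed. I do not foresee any real obstacle: the calculation is entirely elementary and the only point worth highlighting carefully is the bookkeeping showing that $K_{v_{k-1}}$ really cancels, so that the resulting density factorises into $\PP(T_n)$ and the deterministic normalisation $1/\mu_n$.
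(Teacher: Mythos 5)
Your computation is correct: the size-bias factor $K_{v_{k-1}}$ cancels against the uniform choice among the children, so the joint law of $(T_n,v_0,\dots,v_n)$ under $\QQ^*$ collapses to $\tfrac{1}{\mu_n}\PP(T_n)$, from which both the uniform distribution of $v_n$ given $T_n$ and the Radon--Nikodym derivative $Z_n/\mu_n$ follow by conditioning and marginalising respectively. The paper itself does not reproduce the argument but simply cites Lemma~1.2 of \cite{Kersting2017Book}; your self-contained derivation is exactly the standard proof one finds there.
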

\begin{proof}
	The proof can be found in \cite{Kersting2017Book}, Lemma 1.2 therein. 
\end{proof}

\subsection{The many-to-few formula} \label{sec:manyToFew}

Before we introduce the $k$-spine tree, let us recall some facts about
discrete ultrametric trees that are needed in the proofs and in the
construction. First, a tree $\tau$ is called a (planar) ultrametric tree
with height $n \ge 1$ if all its leaves lie at generation $n$, that is,
\[
\forall u \in \tau,\quad d_u = 0 \iff \abs{u} = n,
\]
where $d_u$ denotes the out-degree of $u$. An ultrametric tree with $k$ leaves can always be encoded as a sequence of $k-1$ elements of $\{1, \dots, n\}$ giving the depth of the successive coalescence times between the leaves. More precisely, let $\ell_1, \dots,
\ell_k$ be the leaves of $\tau$, ordered such that
\[
\ell_1 < \dots < \ell_k,
\]
where $<$ is the planar (lexicographical) order of the tree. Then $\tau$
can be encoded as the sequence
\[
\mathcal{H}(\tau) = \big( n - \abs{\ell_1 \wedge \ell_2}, 
\dots, n - \abs{\ell_{k-1} \wedge \ell_k} \big),
\]
where $u \wedge v$ denotes the most-recent common ancestor of $u$ and $v$. Conversely, given a sequence $(h_1, \dots, h_{k-1}) \in \{1, \dots, n\}$
one can find an ultrametric tree $\tau$ such that $\mathcal{H}(\tau) =
(h_1, \dots, h_{k-1})$, that is, $\mathcal{H}$ is a bijection. The tree
$\mathcal{H}^{-1}(h_1,\dots,h_{k-1})$ is obtained through the discrete
CPP construction illustrated in Figure~\ref{fig:discrete_cpp}.
\begin{figure}
	\centering
	\includegraphics[width=.5\textwidth]{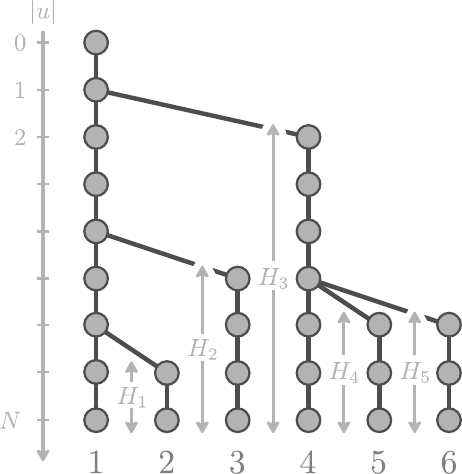}
	\caption{Illustration of the construction of the $k$-spine tree
		$S_k$, for $k=6$.}
	\label{fig:discrete_cpp}
\end{figure}
Finally, for an ultrametric tree $\tau$ we will denote by $B(\tau)$ the
set of branch points, that is,
\[
B(\tau) = \{ u \in \tau : d_u > 1 \}.
\]
Note that there are at most $k-1$ such branch points, and that we have
the identity 
\[
\sum_{u \in B(\tau)} (d_u - 1) = k-1.
\]		

We now construct the $k$-spine tree using i.i.d.\ coalescence depths in
the CPP construction. More precisely, for each $N$ consider a probability
distribution $(p_i, i \le tN)$ on $\{1, \dots, tN\}$. (As in the rest of the
paper we make the dependency on $N$ implicit.) Let $(H^{(N)}_i;\, i \ge
1)$ be a sequence of i.i.d.\ random variables with distribution $(p_i, i
\le tN)$. The $k$-spine tree $S_k$ is defined as the random ultrametric
tree $S_k= \mathcal{H}^{-1}(H^{(N)}_1, \dots, H^{(N)}_{k-1})$. As in
\eqref{eq:definition_H_max} introduce
\[
    \forall i < j,\quad H^{(N)}_{i,j} = H^{(N)}_{j,i} = \max \{ H^{(N)}_i, H^{(N)}_{i+1}, \dots,
    H^{(N)}_{j-1} \}.
\]
Following the notation in \cite{FoutelSchertzer22} we will denote the
distribution of this array by $\QQ^{k,N}$. Although $\QQ^{k,N}$ is
formally defined as a measure on $\{0, \dots, tN \}^{k\times k}$, by
construction it corresponds to the matrix of pairwise distances between
the leaves of the tree $S_k$. By abuse of notation, when convenient, we
also think of $\QQ^{k,N}$ as the law of $S_k$, that is as a measure on
ultrametric trees.

\begin{rem}
    The reader might be confused by the fact that $\Q$ denotes the law
    of the size-biased BPVE tree, whereas $\Q^{k,N}$ is the law of
    a tree $S_k$ with a fixed number $k$ of leaves. One could derive an
    extension of Lemma~\ref{lem:1spine} that connects the law $\PP$ (biased
    by its $k$-th factorial moment) to that of a tree with $k$
    distinguished lineages obtained by grafting independent subtrees on
    the spine tree $S_k$, see \cite[Theorem~3]{FoutelSchertzer22}. In
    \cite{FoutelSchertzer22} the distribution of this larger tree is
    denoted by $\bar{\QQ}^{k,N}$, and $\QQ^{k,N}$ corresponds to the
    law of the tree spanned by the $k$ distinguished lineages. We follow
    this notation for easier comparison with \cite{FoutelSchertzer22},
    even though we do not need such a precise result here.
\end{rem}

The many-to-few formula relates the factorial moments of a branching
process to the distribution of $S_k$. It involves a random bias, that we
denote by $\Delta_k$, which has the following expression in the context of
branching processes in varying environment,
\begin{equation} \label{eq:bias}
        \Delta_k \equiv \Delta_k(S_k) = k!\, \mu_{tN}^k \cdot \prod_{u \in B(S_k)} \frac{1}{(p_{tN-\abs{u}} \mu_{\abs{u}})^{d_u-1}}
	\frac{1}{d_u!} \frac{f^{(d_u)}_{\abs{u}+1}(1)}{f'_{\abs{u}+1}(1)^{d_u}},
\end{equation}
where $f^{(d_u)}_{\abs{u}+1}$ denotes the $d_u$-th derivative of the
generating function of the offspring distribution in generation $|u|$.
Note that we have included the $k!$ term in the definition of $\Delta_k$
compared to \cite{FoutelSchertzer22}. (As in the rest of the paper, we
make the dependency of $\Delta_k$ and $(p_i, i \le tN)$ on $N$ implicit,
but keep it explicit for $\QQ^{k,N}$ and $H^{(N)}_i$ for later purpose.)

\begin{prop}[Many-to-few] \label{prop:many_to_few}
        For any $k \ge 1$ and any measurable map $\phi \colon [0,
        \infty)^{k \times k} \to [0,\infty)$,
	\[
        \E\Big[ \sum_{\substack{u_1,\dots, u_k \in T_{tN}\\\text{$(u_i)$ distinct}}}
        \phi\big( (d_{tN}(u_i, u_j))_{i,j} \big) \Big]
	= \QQ^{k,N}\Big[ \Delta_k \cdot \phi\big( (H^{(N)}_{\sigma_i,\sigma_j})_{i,j}\big)
	\Big]
	\]
        where, under $\QQ^{k,N}$, $\sigma$ is an independent permutation
        of $[k]$.
\end{prop}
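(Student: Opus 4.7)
The plan is to expand both sides as sums over ultrametric subtree shapes and match the coefficient of $\phi$ at each possible distance matrix. On the LHS, I would partition the sum over ordered $k$-tuples of distinct individuals in $T_{tN}$ by the labeled ultrametric subtree $\tau$ they span---with the $i$-th leaf labeled by its tuple position---writing
\[
\text{LHS} = \sum_{\tau} \phi\bigl((h^\tau_{i,j})_{i,j}\bigr) \cdot \E[N_\tau],
\]
where $N_\tau$ counts ordered tuples inducing labeled shape $\tau$ and $(h^\tau_{i,j})$ is the distance matrix of $\tau$. Iterating the branching property along $\tau$---at each branch point $v$ of degree $d_v$ the corresponding individual in $T$ contributes a factor $f^{(d_v)}_{|v|+1}(1)$ (the $d_v$-th factorial moment of $f_{|v|+1}$, which is the expected count of ordered $d_v$-tuples of distinct children), while along edges one tracks expected descendant counts via $\mu$---combined with $\sum_{v : |v|=s} \PP(v \in T) = \mu_s$ along each ancestral lineage, yields
\[
\E[N_\tau] = \mu_{tN}^k \prod_{v \in B(\tau)} \frac{1}{\mu_{|v|}^{d_v-1}} \frac{f^{(d_v)}_{|v|+1}(1)}{f'_{|v|+1}(1)^{d_v}}.
\]

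For the RHS I would expand $\QQ^{k,N}[\Delta_k \phi]$ as a sum over planar shapes $\tau$---each with probability $\QQ^{k,N}(\tau) = \prod_{v \in B(\tau)} p_{tN-|v|}^{d_v-1}$---and over uniform permutations $\sigma \in S_k$. The $p$-dependent factors in $\Delta_k$ cancel exactly those of $\QQ^{k,N}(\tau)$, leaving
\[
\text{RHS} = \sum_{\tau \text{ planar}} \mu_{tN}^k \prod_{v \in B(\tau)} \frac{1}{\mu_{|v|}^{d_v-1} \, d_v!} \frac{f^{(d_v)}_{|v|+1}(1)}{f'_{|v|+1}(1)^{d_v}} \cdot \frac{1}{k!} \sum_{\sigma \in S_k} \phi\bigl((h^\tau_{\sigma_i,\sigma_j})\bigr).
\]
Matching coefficients of $\phi(M)$ for each ultrametric matrix $M$ reduces to the combinatorial identity
\[
\#\bigl\{(\tau,\sigma) : (h^\tau_{\sigma_i,\sigma_j}) = M\bigr\} = \prod_{v \in B(\tau_M)} d_v!,
\]
where $\tau_M$ is the unlabeled tree underlying $M$; this holds because each such pair is equivalent to a choice of planar ordering of the children at each branch point of the abstract labeled tree with matrix $M$, and since leaves carry distinct labels these orderings produce pairwise distinct planar labeled trees.

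The main obstacle is precisely this final matching step: one must carefully distinguish planar from abstract orderings and verify that the overcounting from planar realizations exactly compensates the $\prod 1/d_v!$ normalization in $\Delta_k$, paying attention to the fact that planar positions are always distinguishable even when subtrees are structurally isomorphic. An alternative path---likely that of \cite{FoutelSchertzer22}---is to invoke the general many-to-few formula for spinal decompositions and verify by a direct change-of-measure computation that \eqref{eq:bias} is the correct Radon--Nikodym derivative of the $k$-spine law with respect to the BPVE law biased by the $k$-th falling factorial of $Z_{tN}$, which bypasses the combinatorial bookkeeping by identifying $\Delta_k$ intrinsically.
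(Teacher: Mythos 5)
Your route is genuinely different from the paper's. The paper proves Proposition~\ref{prop:many_to_few} in two lines by reducing it to \cite[Proposition~2]{FoutelSchertzer22} -- it treats the generation index as a type, observes that $n \mapsto 1/\mu_n$ is the harmonic function since $(Z_n/\mu_n)$ is a martingale, and translates the height parameterization. That approach bundles all the combinatorics into the cited theorem; you unwind it explicitly by expanding both sides over tree shapes and matching coefficients of $\phi(M)$ for each ultrametric matrix $M$. What your approach buys is transparency: the identity
\[
\#\bigl\{(\tau,\sigma) : (h^\tau_{\sigma_i,\sigma_j}) = M\bigr\} = \prod_{v \in B(\tau_M)} d_v!
\]
makes visible exactly where the $\prod 1/d_v!$ in the bias $\Delta_k$ comes from. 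Your reasoning for that identity is right -- each pair $(\tau,\sigma)$ is equivalent to an independent choice of planar ordering of children at every branch point of the abstract labeled tree, and since leaves are distinctly labeled these choices are distinguishable. Your formula for $\E[N_\tau]$ is also correct and follows by induction on the number of branch points (the factors of $\mu_{m+1}$ telescope correctly when one conditions on the root branch point and applies the branching property to its $d$ subtrees).

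There is one slip that needs fixing: in your displayed expression for the RHS you retain the $\tfrac{1}{k!}$ coming from averaging over the uniform permutation $\sigma$, but you have dropped the $k!$ prefactor from $\Delta_k$ (you mention only the cancellation of the $p$-dependent factors against $\QQ^{k,N}(\tau)$). After including it, the $k!$ and $\tfrac{1}{k!}$ cancel, leaving $\sum_{\sigma \in S_k}$ rather than $\tfrac{1}{k!}\sum_{\sigma \in S_k}$. With your formula as written, the coefficient of $\phi(M)$ on the RHS is $\tfrac{1}{k!}\prod_v d_v!$ times $\mu_{tN}^k \prod_v \tfrac{1}{\mu_{|v|}^{d_v-1} d_v!}\tfrac{f^{(d_v)}_{|v|+1}(1)}{f'_{|v|+1}(1)^{d_v}}$, which is $\E[N_\tau]/k!$ rather than $\E[N_\tau]$. (Concretely, for $k=2$ your RHS is half the LHS.) Once the $k!$ is restored the coefficients match exactly and the proof goes through.
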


\begin{proof}
	The result follows from the many-to-few formula derived in
	\cite[Proposition~2]{FoutelSchertzer22} for branching processes with a
	general type space. We interpret time in a branching process with varying
	environment as a type, an individual $u$ at generation $n$ is endowed with type
	$X_u = n$. The formula in \cite{FoutelSchertzer22} requires to
	find an appropriate harmonic function for the process. Since the process
	$(Z_n / \mu_n, n \ge 0)$ is a non-negative martingale, this readily
	implies that 
	\[
	h \colon n \mapsto 1/\mu_n 
	\]
	is a harmonic function. The result now follows from \cite[Proposition~2]{FoutelSchertzer22}
	by recalling that $f^{(k)}_{n+1}(1)$ is the $k$-th factorial
	moment of the offspring distribution of an individual living at
	generation $n$. It might help the reader to note that, in the
	current work, the height of the $i$-th branch point is encoded as
	its distance to the leaves $H_i$, whereas in \cite{FoutelSchertzer22} 
	it is encoded as the distance to the root $W_i$, so that $H_i = \floor{tN} - W_i$.
\end{proof}

\section{Proof of the Kolmogorov estimate}
\label{sec:kolmogorov_proof}

We adapt the proof of Lyons, Pemantle and Peres in \cite{Lyons95} for
classical Galton--Watson processes to the case of a varying environment and make use
of the $1$-spine decomposition for BPVEs as introduced in
Section~\ref{sec:1spine}. Our proof of the Kolmogorov estimate will also
require an \emph{a priori} uniform upper bound on the probability of
survival. We derive it using the \emph{shape function} $\varphi_k(s)$ in
generation $k$ which, for $s \in [0,1)$, is defined via
\begin{align}
	\frac{1}{1-f_k(s)} = \frac{1}{(1-s) f_k'(1)} + \varphi_k(s).
\end{align}
Note that $\varphi_k(s)$ can be extended continuously to $[0,1]$ by
setting
\begin{align}
	\varphi_k(1)= \frac{f_k''(1)}{2 f_k'(1)}.
\end{align}
For more details on the shape function we refer to \cite{Kersting2020}.

\begin{lem}\label{lem:bound-proba-extinct}
    Let $t > 0$ be as in Theorem~\ref{thm:kolmogorov} and let $q_k =
    \pp{Z_{Nt}=0 \mid Z_k=1}$ be the probability of extinction in
    generation $Nt$ when starting the process in generation $k$. Let
    $\eps > 0$ be such that $\sigma^2(t)-\sigma^2(t-\epsilon) > 0$. 
    For all $k \leq (1-\eps)Nt$,
    \begin{align}
            q_k \geq 1 - c_\epsilon / \sqrt{\kappa_N},
    \end{align}
    for some constant $c_\epsilon > 0$.
\end{lem}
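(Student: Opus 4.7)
The plan is to iterate the shape function identity and combine it with the variance asymptotics. Starting from the recursion $q_{j-1} = f_j(q_j)$ and dividing the defining relation of $\varphi_j$ applied at $s = q_j$ by $\mu_{j-1}$, then summing over $j = k+1, \dots, Nt$ (using $q_{Nt}=0$), we obtain the exact telescoping identity
\[
\frac{1}{\mu_k P_k} = \frac{1}{\mu_{Nt}} + \sum_{j=k+1}^{Nt} \frac{\varphi_j(q_j)}{\mu_{j-1}},
\qquad P_j \coloneqq 1-q_j.
\]
Since $(\mu_j)_{j \leq Nt}$ is uniformly bounded above and below by \eqref{Cond_on_the_environment}, the claim $P_k \leq c_\epsilon/\sqrt{\kappa_N}$ will follow from a lower bound of order $\sqrt{\kappa_N}$ on the sum on the right-hand side.

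The lower bound on the sum comes from two ingredients. First, Taylor's theorem combined with the monotonicity of $f_j''$ yields (see \cite{Kersting2020})
\[
\varphi_j(s) \geq \frac{f_j''(s)}{2 f_j'(1)^2}
\]
for $s$ close to $1$. Second, by \eqref{assumption_variance} together with the assumption $\sigma^2(t) - \sigma^2(t-\epsilon) > 0$,
\[
\sum_{j=k+1}^{Nt} f_j''(1) \geq c_\epsilon \kappa_N
\]
for $k \leq (1-\epsilon)Nt$ and $N$ large enough. It remains to compare $f_j''(q_j)$ with $f_j''(1)$. Writing $f_j''(q_j) = \E[\xi_j(\xi_j-1) q_j^{\xi_j-2}]$ and noting that the Lindeberg condition \eqref{eq:uniform_integrability} localizes the bulk of $f_j''(1)$ on $\{\xi_j \leq \epsilon \sqrt{\kappa_N}\}$, this comparison reduces to $q_j^{\epsilon \sqrt{\kappa_N}} \gtrsim 1$, equivalently $P_j \lesssim 1/\sqrt{\kappa_N}$.

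Since the required control on $P_j$ is precisely the bound we aim to establish, the argument closes via a bootstrap. Starting from the trivial Markov estimate $P_j \leq \mu_{Nt}/\mu_j = O(1)$ together with a weaker but universally valid lower bound on $\varphi_j$---for instance $\varphi_j(q_j) \gtrsim q_j^K \E[\xi_j(\xi_j-1) \1_{\xi_j \leq K}]/f_j'(1)^2$ with cutoff $K \asymp \sqrt{\kappa_N}$---a first iteration yields an improved rate on $P_j$ which, after one further pass, reaches $O(1/\sqrt{\kappa_N})$. Executing this bootstrap while keeping track of constants is the main technical obstacle; the fact that the stated rate is $1/\sqrt{\kappa_N}$, weaker than the sharp $1/\kappa_N$ of Theorem~\ref{thm:kolmogorov}, reflects precisely the loss incurred by the Lindeberg truncation at $\sqrt{\kappa_N}$.
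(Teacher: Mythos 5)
You correctly identify the starting identity from \cite{BoKer21} and correctly reduce the claim to a lower bound of order $\sqrt{\kappa_N}$ on $\sum_j \varphi_j(q_j)$. However, the bootstrap you propose to compare $\varphi_j(q_j)$ with $f_j''(1)$ does not close, and this is a genuine gap. Starting from the Markov estimate $P_j = O(1)$, the factor $q_j^K = (1-P_j)^K$ is only bounded below by a constant if the cutoff $K$ stays $O(1)$; but the hypotheses \eqref{assumption_variance} and \eqref{eq:uniform_integrability} permit the variance mass to concentrate on offspring sizes of intermediate order (say $\xi_j \asymp \kappa_N^{1/4}$), in which case $\E[\xi_j(\xi_j-1)\1_{\xi_j\leq K}]$ for a fixed $K$ can be a vanishing fraction of $f_j''(1)$, and the first iteration produces nothing. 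Taking $K$ to grow with $N$ reintroduces the factor $q_j^K$, which is exponentially small unless $P_j$ is already $O(1/K)$ --- precisely the circularity you set out to resolve. The phrase ``after one further pass'' papers over the fact that the first pass gives no improvement.

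The paper avoids this entirely: it invokes a \emph{universal, deterministic} shape-function inequality, namely $\varphi_j(s) \geq \tfrac{1}{2}\varphi_j(0)$ for all $s\in[0,1]$ (Lemma~1 of \cite{Kersting2020}), applied at $s = q_{j+1}$. This removes any $q_j$-dependence from the lower bound, so no a priori control on the survival probability is needed. The $\sqrt{\kappa_N}$ rate then emerges in Lemma~\ref{Lemma lower bound phi} from comparing $\varphi_j(0) \asymp \sum_{k\geq 2}f_j[k](k-1)$ with $f_j''(1)=\sum_{k\geq 2}f_j[k]k(k-1)$ on the Lindeberg-localized range $\xi_j\leq\epsilon\sqrt{\kappa_N}$, where the extra factor $k$ costs at most $\epsilon\sqrt{\kappa_N}$. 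So while you have correctly diagnosed that the $\sqrt{\kappa_N}$ loss traces back to the Lindeberg truncation scale, the mechanism is a clean deterministic estimate involving $\varphi_j(0)$, not a bootstrap on $q_j$.
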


\begin{proof}
	By Equation (5) of \cite{BoKer21} we get
	\begin{align}
		\frac{1}{\pp{Z_{Nt}>0\mid Z_k=1}} = 
		\frac{\mu_k}{\mu_{Nt} } +
		\sum_{j=k}^{Nt-1} \frac{\mu_k \varphi_{j+1}( q_{j+1})}{\mu_j}. \label{estimate prob surv}
	\end{align}
	We aim to prove that $q_k\to 1$ as $N \to \infty$, with a
        uniform control over the speed of convergence for $k \leq
        (1-\eps) Nt$. This is equivalent to showing that the
        r.h.s.\ of \eqref{estimate prob surv} diverges uniformly. We
        have the estimate
	\begin{align}
		\sum_{j=k}^{Nt-1} \varphi_{j+1}( q_{j+1})
		&\geq \sum_{j=(1-\eps)Nt}^{Nt-1} \varphi_{j+1}(q_{j+1}) \\
		&\geq \frac{1}{2} \sum_{j=(1-\eps)Nt}^{Nt-1} \varphi_{j+1}(0),
	\end{align}
        by Lemma 1 in \cite{Kersting2020}. The r.h.s.\ above can be
        bounded from below via 
	\begin{equation*}
		\sum_{j=(1-\eps)Nt}^{Nt-1} \varphi_{j+1}(0) 
		\geq c \sqrt{\kappa_N} (\sigma^2(t) - \sigma^2(t-\epsilon)),
	\end{equation*}
	by Lemma \ref{Lemma lower bound phi}. Therefore, recalling that by
	\eqref{Cond_on_the_environment} $\mu_j$ is uniformly bounded, it follows
        that, for all $k \leq (1-\eps)Nt$,
	\begin{equation*}
		\frac{1}{1-q_k} = \frac{1}{\pp{ Z_{Nt}>0 \mid Z_k=1}} 
		\geq \sum_{j=(1-\eps)Nt}^{Nt-1} \frac{\mu_k \varphi_j(0)}{\mu_j} 
		\geq c' \sqrt{\kappa_N}
	\end{equation*}
	which immediately gives
	\begin{equation*}
		q_k \geq 1- \frac{1}{c' \sqrt{\kappa_N}}.
                \qedhere
	\end{equation*}
\end{proof}

\begin{proof}[Proof of Theorem~\ref{thm:kolmogorov}]
	In order to ease the notation, we only prove the result for $t=1$. The
	result for general $t$ follows by linear scaling.
	We follow the method of proof for estimating the survival probability
	of a critical Galton--Watson process designed in \cite{Lyons95}. Recall
	the construction of the spinal measure $\QQ^*$ in Section~\ref{sec:1spine}. Let us
 denote by $A_{N}$ the event that the spine is the left-most
	individual at generation $N$. Recalling that the spine is uniformly
	distributed among individuals at generation $N$,
	\[
	\QQ^*(Y \1_{A_{N}}) = \QQ\Big( \frac{Y}{Z_{N}} \Big) = 
	\frac{1}{\mu_{N}} \pp{Y \1_{Z_{N} > 0}}
	\]
	for any random variable $Y$ measurable w.r.t.\ the first $N$ generations
	of $T$. In particular, taking $Y = Z_{N}$ yields
	\[
	\QQ^*(Z_{N} \mid A_{N}) = \frac{\QQ^*(Z_N \1_{A_N})}{\QQ(1/Z_N)} =\frac{\mu_{N}}{\pp{ Z_{N} > 0 }}.
	\]
	We are now left with estimating the expectation on the left-hand side.
	
        We need to introduce some notation. Let us denote by $R_{N,i}$
        (resp.\ $L_{N,i}$) the number of individuals at generation $N$
        whose most-recent common ancestor with the spine lives at
        generation $i-1$, and that are to the right (resp.\ left) of the
        spine. Let $Z_{N,i} = L_{N,i} + R_{N,i}$ be the total number of
        individuals that are descended from the offspring of the spine at
        generation $i$. We have that 
	\[
            A_{N} = \bigcap_{i=1}^{N} A_{N,i}
            \coloneqq \bigcap_{i=1}^{N} \{ Z_{N,i} = R_{N,i} \},
	\]
	and the $(A_{N,i};\, i)$ are independent events. Let us also denote by
	$R_i$ the total number of individuals living at generation $i$ born to the
	right of the spine, and $\xi_i^*$ the total number of children of the spine
	at generation $i$ (including the spine individual at generation $i$). Let $v_i$ be
	the index of the spine at generation $i$.

        As a preliminary step, we notice that $R_{N,i}$ and $A_{N,i}$ are
        negatively correlated. To see this, let $Z^{(N-i)}_j$ for
        each $j \ge 1$ be the size at generation $j$ of a BPVE started
        from a single individual in generation $i$ and stopped at
        generation $N$. We get
	\begin{align}
		\EE{R_{N,i} \1_{A_{N,i}}}
                &= \EE{ \EE{ \sum_{j=1}^{R_i} Z^{(N-i)}_j q_i^{\xi^*_i-R_i-1}  \:\bigg|\: R_i, \xi^*_i} } \\
		&=\EE{ R_i \frac{\mu_N}{\mu_i} q_i^{\xi^*_i-R_i-1} } \leq \EE{ R_i \frac{\mu_N}{\mu_i} } \EE{  q_i^{\xi^*_i-R_i-1} } = \EE{R_{N,i}} \EE{\1_{A_{N,i}}}, \label{eq:negative correlation}
	\end{align}
	where we applied Chebyshev's sum inequality for the increasing function $k \mapsto k$ and the decreasing one $k \mapsto q_i^{-k}$. Now, let $R'_{N,i}$ be an independent r.v.\ such that $R'_{N,i}$ has the
        distribution of $R_{N,i}$ conditional on $A_{N,i}$. Let
        $\bar{A}_{N,i}$ denote the complement of $A_{N,i}$ and introduce
	\[
            R^*_{N,i} = R_{N,i} \1_{A_{N,i}} + R'_{N,i} \1_{\bar{A}_{N,i}}.
	\]
	Clearly, $R^*_N \coloneqq 1 + \sum_{i=1}^{N} R^*_{N,i}$ has the
	distribution of $Z_N$ under $\QQ^*(\cdot \mid A_N)$. The result will
	follow by first computing the expectation of $R_N \coloneqq 1 +
	\sum_{i=1}^N R_{N,i}$ under $\QQ^*$, then proving that the $L^1$ distance
	of $R_N$ and $R^*_N$ goes to $0$ as $N \to \infty$.
	
	Let us start by computing the expectation of $R_{N}$. The expected number
	of children of the spine individual at generation $i-1$ that are not the
	spine individual at generation $i$ is $f''_i(1)/f'_i(1)$. Each of these children
	ends up to the right of the spine with probability $1/2$, and grows an
	independent tree with expected size $\mu_{N} / \mu_i$. Therefore,
	\[
	\frac{1}{\kappa_N} \QQ^*[R_N] 
	= \frac{1}{\kappa_N} \Big(
	1 + \sum_{i=1}^N 
	\frac{f''_i(1)}{2 f'_i(1)} 
	\frac{\mu_N}{\mu_i} 
	\Big)
	\to \frac{e^{X_1}}{2} \int_{[0,1]} e^{-X_s} \sigma^2(d s),
	\]
	as $N \to \infty$, where the convergence follows from Lemma~\ref{conv_variance}.
	
	Now, let us compare $R_N$ to $R^*_N$. We first have
	\begin{align*}
		\QQ^*\Big[ \frac{1}{\kappa_N} \abs{R_N - R^*_N} \Big]
		&\le \frac{1}{\kappa_N} \sum_{i = 1}^{N} \QQ^*[R_{N,i}\1_{\bar{A}_{N,i}}] +
		\QQ^*[R'_{N,i}]\QQ^*(\bar{A}_{N,i}).
	\end{align*}
	Recall that $\QQ^*[R'_{N,i}] = \QQ^*[ R_{N,i} \mid A_{N,i} ]$. Since
	$R_{N,i}$ and $A_{N,i}$ are negatively correlated (see
        \eqref{eq:negative correlation}), we have $\QQ^*[ R_{N_i}
	\mid A_{N,i} ]\leq \QQ^*[R_{N_i}]$. In turn, $R_{N,i}$ and $\bar A_{N,i}$
	are positively correlated so that 
	\[
	\QQ^*[R'_{N,i}]\QQ^*(\bar{A}_{N,i})  
	= \QQ^*[R_{N,i}]\QQ^*(\bar{A}_{N,i}) 
	\leq \QQ^*[R_{N,i} \1_{\bar A_{N,i}}].
	\]
	Next, 
	\begin{align*}
		\QQ^*\left[R_{N,i}\1_{\bar{A}_{N,i}}\right] 
		&= \QQ^*\bigg[ \QQ^*( R_{N,i}\1_{\bar{A}_{N,i}} \mid \xi^*_i, v_i ) \bigg]  
		=  \QQ^*\bigg[  \QQ^*( R_{N,i} \mid \xi^*_i, v_i )  \QQ^*({\bar{A}_{N,i}} \mid \xi^*_i, v_i ) \bigg] \\
		&\leq \frac{\mu_N}{\mu_i} \QQ^*\bigg[  r_i  \QQ^*({\bar{A}_{N,i}} \mid \xi^*_i, v_i ) \bigg] 
		= \frac{\mu_N}{\mu_i}  \QQ^*[r_i (1-q_i^{\xi^*_i-r_i-1})] \\
		&\leq \frac{\mu_N}{\mu_i}  \QQ^*[(\xi^*_i-1) (1-q_i^{\xi^*_i-1})].
	\end{align*}
	Gathering the previous estimates yields 
	\begin{align*}
		\frac{1}{\kappa_N} \QQ^*\Big[ \abs{R_N - R^*_N} \Big] \leq \frac{2}{\kappa_N} \sum_{i = 1}^{N-1} \frac{\mu_N}{\mu_i}
		\QQ^*[(\xi^*_i-1) (1-q_i^{\xi^*_i-1})].
	\end{align*}
	Now, for $\epsilon, \epsilon' > 0$, using Lemma \ref{lem:bound-proba-extinct} and 
	Lemma~\ref{conv_variance},
	\begin{align*}
		\frac{2}{\kappa_N} &\sum_{i=1}^{N} \frac{\mu_N}{\mu_i}
                \QQ^*[(\xi^*_i-1) (1-q_i^{(\xi^*_i-1)})] \\
		&\le  \frac{2\mu_N }{\kappa_N}\Big( \sum_{i = 1}^{(1-\epsilon)N} \frac{1}{\mu_i}\QQ^*[\xi^*_i \1_{\xi^*_i \ge \epsilon' \sqrt{\kappa_N}}]
		+ \sum_{i = 1}^{(1-\epsilon)N} \frac{1}{\mu_i} \QQ^*[\xi^*_i-1] (1-q_i^{\epsilon' \sqrt{\kappa_N}})
		+ \sum_{i = (1-\epsilon)N}^{N} \frac{1}{\mu_i}  \QQ^*[\xi^*_i-1] \Big) \\
		&\le \frac{2\mu_N}{\kappa_N}\Big( \sum_{i = 1}^{N} \frac{1}{\mu_if'_i(1)} \E[(\xi^*_i)^2 \1_{\xi^*_i \ge \epsilon'\sqrt{\kappa_N}}]
		+ \sum_{i = 1}^{N} \frac{f''_i(1)}{\mu_i f'_i(1)} (1-(1-\tfrac{c_\epsilon}{\sqrt{\kappa_N}})^{\epsilon' \sqrt{\kappa_N}})
		+ \sum_{i = (1-\epsilon)N}^{N} \frac{f''_i(1)}{\mu_i f'_i(1)} \Big) \\
		&\to C\mu(1) (1-e^{\epsilon' c_\epsilon}) + C(\mu(1)-\mu(1-\epsilon)),
		\quad \text{as $N \to \infty$},
	\end{align*}
	where $\mu(t)=\int_{[0,t]} e^{-X_s} \sigma^2(ds)$
	and $C$ is a constant independent of $\eps,\eps'$.
	Since this limit can be
	made arbitrarily small by letting $\epsilon', \epsilon \to 0$, we obtain
	the desired bound on the $L^1$ distance, hence
	\begin{equation*}
		\pp{Z_N>0}= \frac{\mu_N}{\QQ^*[Z_N \mid A_N]} \sim
		\frac{\mu_N}{\kappa_N \frac{1}{\kappa_N} \QQ^*[R_N]}\sim
		\frac{2}{\kappa_N} \frac{1}{ \int_{[0,1]} e^{-X_s} \sigma^2(d s)}.
                \qedhere
	\end{equation*}
\end{proof}

\begin{rem}
        A careful reading of the argument shows that the strong Lindeberg
        condition \eqref{eq:uniform_integrability} is needed because of
        our crude estimate for the extinction probability of
        Lemma~\ref{lem:bound-proba-extinct}. If we could replace
        $\sqrt{\kappa_N}$ by $\kappa_N$ in the lemma, our Kolmogorov
        estimate would hold under the weaker condition
        \eqref{eq:lindeberg}.
\end{rem}

\section{Convergence of the BPVE} \label{Sec:Proofs}

This last section is dedicated to the proof of Theorem~\ref{thm:yaglom}. Recall the encoding of generation $tN$
of the BPVE as a random metric measure space $[T_{tN}, d_{tN},
\lambda_{tN}]$ from Section~\ref{Sec:Proofs}.

As alluded to in the introduction, we follow the general proof strategy
proposed in an earlier work by two of the authors \cite{FoutelSchertzer22}. 
We start by proving that the genealogy of the BPVE converges in the
Gromov-weak topology. According to Proposition~\ref{thm:method_of_moments}, 
proving convergence in distribution for the Gromov-weak topology amounts
to computing the limit of the moments of the random metric measure space
$[T_{tN}, d_{tN}, \lambda_{tN}]$. Using the many-to-few formula, see
Proposition~\ref{prop:many_to_few}, these moments can be expressed in
terms of the distribution of a finite tree, the $k$-spine tree
\cite{Harris2017, Harris2020, FoutelSchertzer22}. This $k$-spine tree is
studied in Section~\ref{sec:moment_computation} where we derive the limit
of the moments of the BPVE. A small caveat to this approach is that, for
the moments to be well-defined, we need to make a truncation of the
reproduction laws, which is carried out in Section~\ref{sec:truncation}.
We relate the truncated and the untruncated versions via
Corollary~\ref{cor:coupling_convergence}. Once the convergence is
established in the Gromov-weak topology, we reinforce it to a convergence
in the GHP topology by using Proposition~\ref{prop:Gweak2GHP}.

\subsection{Moments of the BPVE} 
\label{sec:moment_computation}

Our assumptions ensure that the BPVE has a finite second moment, but moments of higher are in general not finite. Thus, the method of moments
cannot be applied directly. However, we will show in
Section~\ref{sec:truncation} that, up to making a cutoff in the offspring
size, we can safely assume that there exists a sequence $\beta_N =
o(\kappa_N)$ such that 
\begin{equation} \label{eq:moment_growth}
	\forall i \ge 1,\, k\geq 2, \quad f^{(k)}_i(1) \le \beta_N^{k-2} f''_i(1),
\end{equation}
where $f^{(k)}$ denotes the $k$-th derivative of $f$. Under this
assumption, the moments of a nearly critical BPVE can be computed.

\begin{prop} \label{prop:moment_bpve}
	Consider a BPVE satisfying \eqref{Cond_on_the_environment} and
	\eqref{assumption_variance}, and a fixed $t > 0$ such that $t$ is a
	continuity point of $\sigma^2$ and $X$, and $\sigma^2$ is strictly
        increasing at $t$. If the moment growth assumption
        \eqref{eq:moment_growth} is fulfilled, then, for any bounded
        continuous $\phi$
	\[
	\frac{1}{\kappa_N^{k-1}}    
	\E\Big[ \sum_{\substack{u_1,\dots, u_k \in T_{tN}\\\text{$(u_i)$ distinct}}}
        \phi\big( \big(\tfrac{d_{tN}(u_i, u_j)}{N}\big)_{i,j} \big) \Big]
	\to 
	e^{X_t} \bar{\rho}_t^{k-1} k!\,
	\E\big[ \phi\big( (H_{\sigma_i, \sigma_j})_{i,j} \big) \big],
	\]
        where $(H_{i,j})_{i,j}$ are as in
        \eqref{eq:definition_H_moment} and $\sigma$ is an independent
        and uniform permutation of $[k]$.
\end{prop}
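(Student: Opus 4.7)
The plan is to apply the many-to-few formula of Proposition~\ref{prop:many_to_few}, which reduces the $k$-th factorial moment on the left-hand side to a single expectation $\QQ^{k,N}\big[\Delta_k\cdot \phi\big((H^{(N)}_{\sigma_i,\sigma_j}/N)_{i,j}\big)\big]$ over the $k$-spine tree $S_k$. The bulk of the work consists in choosing the $k$-spine distribution $(p_i)_{i\le tN}$ cleverly so that, simultaneously, (a) the rescaled depths $H^{(N)}_i/N$ converge to i.i.d.\ random variables with c.d.f.\ $F$, and (b) the random bias $\Delta_k$ telescopes into a tractable limit.

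Accordingly, I would set
\[
    p_{tN-k+1} \coloneqq \frac{\mu_{tN}}{2\bar{\rho}_t^{(N)}} \cdot \frac{f''_k(1)}{f'_k(1)^2\, \mu_{k-1}},
    \qquad
    \bar{\rho}_t^{(N)} \coloneqq \frac{\mu_{tN}}{2}\sum_{k=1}^{tN}\frac{f''_k(1)}{f'_k(1)^2\, \mu_{k-1}},
\]
which defines a probability distribution on $\{1,\dots,tN\}$. Invoking Lemma~\ref{conv_variance} (the asymptotics cited in Remark~\ref{rem:comparisonAssumptions}), one has $\bar{\rho}_t^{(N)}/\kappa_N \to \bar{\rho}_t$, and the same argument applied to partial sums gives $\sum_{i\le sN} p_i \to F(s)$ at every continuity point of $F$. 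Consequently $H^{(N)}_i/N \Rightarrow H_i$ jointly for $i \le k-1$, where the $H_i$ are i.i.d.\ with c.d.f.\ $F$, and by the continuous mapping theorem applied to the max functional $(H^{(N)}_{i,j}/N) \Rightarrow (H_{i,j})$.

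The central observation is a direct algebraic cancellation: with the above choice of $p$, the contribution of every binary branch point $u \in B(S_k)$ (i.e., $d_u = 2$) to \eqref{eq:bias} reduces to
\[
    \frac{1}{p_{tN-|u|}\,\mu_{|u|}} \cdot \frac{1}{2} \cdot \frac{f''_{|u|+1}(1)}{f'_{|u|+1}(1)^2} = \frac{\bar{\rho}_t^{(N)}}{\mu_{tN}},
\]
independent of the position $|u|$. Since $\sum_{u \in B(S_k)}(d_u - 1) = k-1$, when $S_k$ is binary we get exactly $k-1$ such terms, so $\Delta_k = k!\,\mu_{tN}(\bar{\rho}_t^{(N)})^{k-1}$ and $\Delta_k/\kappa_N^{k-1} \to k!\,e^{X_t}\,\bar{\rho}_t^{k-1}$, which is precisely the prefactor appearing in the statement.

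The main obstacle is to show that configurations where $S_k$ contains a branch point of degree $d \ge 3$ contribute negligibly to the rescaled expectation. This is precisely where the moment-growth hypothesis \eqref{eq:moment_growth} enters. An analogous computation at a degree-$d$ branch point at position $k$ yields a factor bounded by $C(\bar{\rho}_t^{(N)}/\mu_{tN})^{d-1}\beta_N^{d-2}/f''_k(1)^{d-2}$. Multiplying by the probability $p_{tN-k+1}^{d-1} \sim (f''_k(1)/\kappa_N)^{d-1}$ that the $d-1$ consecutive depths coincide at that position, and summing over $k$ using $\sum_k f''_k(1) \sim \kappa_N\sigma^2(t)$, yields a total contribution of order $\beta_N^{d-2}/\kappa_N^{d-2} = o(1)$ since $\beta_N = o(\kappa_N)$. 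Since there are only finitely many planar shapes of ultrametric trees with $k$ leaves, summing these estimates establishes negligibility. Assembling everything via boundedness of $\phi$ together with this uniform-integrability-type bound to pass to the limit in $\QQ^{k,N}[\Delta_k \phi]$ yields the claimed convergence.
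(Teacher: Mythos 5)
Your proposal is correct and follows essentially the same strategy as the paper's proof: apply the many-to-few formula, choose the spine depth distribution $(p_i)$ proportional to $\mu_{tN}f''_k(1)/(f'_k(1)^2\mu_{k-1})$ so that the bias factor at each binary branch point telescopes to the position-independent constant $\kappa_N\bar{\rho}_t^{(N)}/\mu_{tN}$ (in the paper's normalization), then use the moment-growth bound \eqref{eq:moment_growth} to kill the contribution of configurations with a branch point of out-degree $\ge 3$. Your spine distribution is exactly the $\bar p_i$ of the paper's ``Step 2,'' and your identification of the limiting prefactor $k!\,e^{X_t}\bar{\rho}_t^{k-1}$ agrees. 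The organizational difference is that the paper runs the negligibility estimate (``Step 1'') with a \emph{uniform} spine on $\{1,\dots,tN\}$ and invokes Lemma~\ref{lem:counting_trees}, then switches to the variance-weighted spine only for the binary main term; you instead use a single spine measure throughout. That is a legitimate simplification, and the bound you sketch --- pairing the bias factor $C(\bar{\rho}_t^{(N)}/\mu_{tN})^{d-1}\beta_N^{d-2}/f''_k(1)^{d-2}$ with the probability $p_{tN-k+1}^{d-1}$ and summing in $k$ --- does yield, after including the binary factors and dividing by $\kappa_N^{k-1}$, an overall $O\big((\beta_N/\kappa_N)^{k-1-|B|}\big)$ that vanishes. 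You should, however, state the multi-branch-point case more carefully: your displayed estimate only treats one non-binary branch point, and the final accounting should make clear that each non-binary vertex of out-degree $d_i$ contributes a factor $(\beta_N/\kappa_N)^{d_i-2}$, with a sum over finitely many \emph{tree shapes} (degree sequences), not a literal reuse of the same display.

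One genuine, though easily filled, gap in the final assembly: after discarding the non-binary contribution and observing that $\Delta_k/\kappa_N^{k-1}$ is a deterministic constant on $\{|B|=k-1\}$, you are reduced to showing $\QQ^{k,N}\big[\1_{|B|=k-1}\,\phi\big]\to \E[\phi((H_{\sigma_i,\sigma_j})_{i,j})]$. Convergence of $(H^{(N)}_i/N)$ and the continuous mapping theorem give $\QQ^{k,N}[\phi]\to\E[\phi]$, but you must \emph{also} check $\QQ^{k,N}(\,|B|<k-1\,)\to 0$ (the bound you derived controls $\Delta_k\1_{|B|<k-1}$, which does not imply the indicator has small probability, since $\Delta_k$ can vanish on that event, e.g.\ when some $f^{(d)}_{j+1}(1)=0$). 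This is a one-line fix under the truncation hypothesis: $|B|<k-1$ forces $H^{(N)}_i=H^{(N)}_{i+1}$ for some $i$, and $\PP(H^{(N)}_1=H^{(N)}_2)=\sum_j p_j^2 \le \max_j p_j = O(\beta_N/\kappa_N)\to 0$ because $f''_j(1)\le\beta_N f'_j(1)$. Adding this sentence closes the argument.
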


\begin{proof}
	Let $\abs{B_{u_1,\dots,u_k}}$ denote the number of branch points of
	the subtree spanned by $(u_1,\dots,u_k)$. We decompose the expectation
        with respect to the value of $\abs{B_{u_1,\dots,u_k}}$ as
	\[
	\E\Big[ \sum_{\substack{u_1,\dots, u_k \in T_N\\\text{$(u_i)$ distinct}}}
        \phi\big( \big(\tfrac{d_{tN}(u_i, u_j)}{N}\big)_{i,j} \big) \Big]
	= \sum_{b=1}^{k-1} \E\Big[ \sum_{\substack{u_1,\dots, u_k \in T_N\\\text{$(u_i)$ distinct}}}
        \1_{\abs{B_{u_1,\dots,u_k}}=b} \phi\big( \big(\tfrac{d_{tN}(u_i, u_j)}{N}\big)_{i,j} \big) \Big].
	\]
        There are two steps in the proof. First, we prove that, due to
        our assumption \eqref{eq:moment_growth} on the moments, the
        contributions of the terms with $b < k-1$ vanish. Second, we
        compute the limit of the terms with only binary branch points and
        show that we recover the moments of the environmental CPP.
	
	\textbf{Step 1.} Fix $b < k-1$. We apply the many-to-few formula with
	branch lengths $H_i^{(N)}$ uniformly distributed on $\{1,\dots,tN\}$. Let
	$\QQ^{k,Nt}$ be the corresponding $k$-spine tree distribution.
	By Proposition~\ref{prop:many_to_few}, we have
	\[
	\E\Big[ \sum_{\substack{u_1,\dots, u_k \in T_N\\\text{$(u_i)$ distinct}}}
        \1_{\abs{B_{u_1,\dots,u_k}}=b} \phi\big( \big(\tfrac{d_{tN}(u_i, u_j)}{N}\big)_{i,j} \big) \Big]
	= \QQ^{k, Nt}\Big[ \Delta_k \1_{\abs{B} = b} \phi\big( \big(\tfrac{H^{(N)}_{\sigma_i, \sigma_j}}{N}\big)_{i,j} \big)
	\Big],
	\]
        where $\abs{B}$ denotes the number of branch points in the $k$-spine
	tree. Recall the expression of $\Delta_k$ from \eqref{eq:bias}.
	Using assumption~\eqref{eq:moment_growth} we have
	\[
	\Delta_k = (tN)^{k-1} k!\, \mu_N^k \prod_{u \in B} 
	\frac{1}{\mu_{\abs{u}}}
	\frac{1}{d_u!}
	\frac{f^{(d_u)}_{\abs{u}+1}(1)}{f'_{\abs{u}+1}(1)^{d_u}}
	\le (tN)^{k-1} C \prod_{u \in B} \beta_N^{d_u-2} f''_{\abs{u}+1}(1),
	\]
	for some constant $C$ that depends on the uniform bounds on
	$\mu_i$ and $f'_i(1)$ from Lemma~\ref{lemma_jump_bound}. Using that
	$\sum_{u \in B} d_u - 1 = k-1$ leads to 
	\begin{align*}
		\frac{1}{\kappa_N^{k-1}} 
		\QQ^{k, Nt}\Big[ \Delta_k \1_{\abs{B} = b} \phi\big( (H^{(N)}_{\sigma_i, \sigma_j})_{i,j} \big) \Big] 
		&\le \Big(\frac{Nt}{\kappa_N}\Big)^{k-1} C \norm{\phi}_\infty 
		\QQ^{k, Nt}\Big[ \beta_N^{k-1+b} \prod_{u \in B} f''_{\abs{u}+1}(1) \1_{\abs{B} = b} \Big] \\
		&= \beta_N^{k-1+b} 
		\frac{1}{\kappa_N^{k-1}} O\Big( \Big(\sum_{j=1}^{Nt} f''_j(1) \Big)^b\Big)\\
		&= \Big(\frac{\beta_N}{\kappa_N}\Big)^{k-1+b} 
		O\Big( \Big(\frac{1}{\kappa_N} \sum_{j=1}^{Nt} f''_j(1) \Big)^b\Big) \\
		&\to 0, \quad \text{as $N \to \infty$},
	\end{align*}
	where we have used Lemma~\ref{lem:counting_trees} in the second line,
	assumption \eqref{assumption_variance} and that $\beta_N = o(\kappa_N)$ in the
	last line.

	\textbf{Step 2.} To compute the contribution of binary branch points, we
	also apply the many-to-few formula, but with a different branch length
	distribution. We assume that $H^{(N)}_j$ are independent and identically distributed with
	\[
	\forall 1 \leq  i \le tN,\quad \bar p_i = \PP(H^{(N)} = i) 
	\coloneqq \frac{1}{\bar{\rho}_{t}^{(N)}}  \frac{1}{2\kappa_N} \frac{\mu_{tN}}{\mu_{tN-i}} \frac{f''_{tN-i+1}(1)}{f'_{tN-i+1}(1)^2},
	\]
	where $\bar{\rho}_{t}^{(N)}$ is the renormalization constant making $(\bar p_i, i \le
	tN)$ a probability measure:
	\[
	\bar{\rho}_{t}^{(N)} = \frac{\mu_{tN}}{2 \kappa_N} \sum_{j = 0}^{tN-1} \frac{1}{\mu_j} \frac{f_{j+1}''(1)}{f_{j+1}'(1)^2}.
	\]
	To distinguish the distribution of this $k$-spine tree from that derived
	with uniform branch lengths, let us denote it by $\bar{\QQ}^{k,Nt}$. We
	obtain by Proposition~\ref{prop:many_to_few} that 
	\[
	\E\Big[ \sum_{\substack{u_1,\dots, u_k \in T_N\\\text{$(u_i)$ distinct}}}
        \1_{\abs{B_{u_1,\dots,u_k}}=k-1} \phi\big( \big(\tfrac{d_{tN}(u_i, u_j)}{N}\big)_{i,j} \big) \Big]
	= \bar\QQ^{k, Nt}\Big[ \Delta_k \1_{\abs{B} = k-1} 
	\phi\Big( \big(\tfrac{H^{(N)}_{\sigma_i, \sigma_j}}{N}\big)_{i,j} \Big)
	\Big],
	\]
	where $\Delta_k$ now takes the simple expression
	\[
	\Delta_k = \kappa_N^{k-1} k!\, \mu_{tN}
	\big(\bar{\rho}_t^{(N)}\big)^{k-1}.
	\]
        By Lemma~\ref{conv_variance} the distribution $H^{(N)}_i / N$
        under $\bar\QQ^{k,Nt}$ converges to the law with
        cumulative distribution function $F$ defined in
        \eqref{eq:definition_rho}.
        We want to derive the almost sure limit of $\Delta_k$ under
        $\bar\QQ^{k,Nt}$ and use the bounded convergence theorem to
        conclude the proof. Since $t$ is a continuity point of $(X_s, s
        \ge 0)$, by Lemma~\ref{conv_variance},
	\[
	\mu_{tN} \to e^{X_t} \quad \text{ and } \quad 
	\bar{\rho}_t^{(N)} \to \bar{\rho}_t
	\quad \text{as $N \to \infty$}.
	\]
	Finally, by the first part of the proof,
	\[
	\bar\QQ^{k, Nt}\Big[ \1_{\abs{B} < k-1} \Big]
	= O\Big( \frac{1}{\kappa_N^{k-1}}\bar\QQ^{k, Nt}\Big[ \Delta_k \1_{\abs{B} < k-1} \Big] \Big)
	= O\Big( \frac{1}{\kappa_N^{k-1}} \E\Big[ 
	\sum_{\substack{u_1,\dots, u_k \in T_N\\\text{$(u_i)$ distinct}}}
	\1_{\abs{B} < k-1} \Big]  \Big)
	\to 0.
	\]
	Altogether this shows that 
	\[
	\frac{1}{\kappa_N^{k-1}}\Delta_k\1_{\abs{B}=k-1} \to k!\, e^{X_t} \bar{\rho}_t^{k-1},
	\]
	and the bounded convergence theorem yields
	\[
	\frac{1}{\kappa_N^{k-1}} \bar\QQ^{k, tN}\Big[
	\Delta_k \1_{\abs{B} = k-1} \phi\Big( \big( \tfrac{H^{(N)}_{\sigma_i, \sigma_j}}{N}\big)_{i,j} \Big) \Big] 
	\to k!\, e^{X_t} \bar{\rho}_t^{k-1}
	\E\big[ \phi\big((H_{\sigma_i, \sigma_j})_{i,j}\big) \big],
	\]
	which ends the proof.
\end{proof}

\subsection{Truncating the offspring distribution}
\label{sec:truncation}

Let $(\beta_N)_N$ be a sequence of natural numbers. Let us define the
truncated offspring size $\tilde{\xi}_i \coloneqq \xi_i \wedge \beta_N$,
where $\xi_i$ denotes a generic copy of the offspring distribution in
generation $i$, hence has distribution $f_i$. In the same manner we
define $(\widetilde{Z}_i, i\geq 0)$ constructed from the truncated
variables $(\tilde{\xi}_{i,j}, i\geq 1, j \geq 1)$.
In this section, we will need the sequence of natural numbers $(\beta_N, N \ge 1)$ 
to satisfy $\beta_N = o(\kappa_N)$,
\begin{equation} \label{eq:cond_truncation}
	\sum_{j=1}^{Nt} \E[\xi_j \1_{\xi_j > \beta_N}] \to 0 \quad  \text{ and } \quad  \frac{1}{\kappa_N}\sum_{j=1}^{Nt} \E[\xi_j^2 \1_{\xi_j > \beta_N}] \to 0 
	\quad \text{as $N \to \infty$}.
\end{equation}
We know, thanks to Lemma~\ref{lem:correct_truncation}, that such a sequence
exists provided assumption \eqref{eq:lindeberg} holds. The
next result shows that the truncated BPVE satisfies all the required
properties to apply the method of moments.

\begin{lem} \label{Lemma_properties_cut_off_version}
	Consider a BPVE satisfying assumptions \eqref{Cond_on_the_environment}, 
	\eqref{assumption_variance} and \eqref{eq:lindeberg}.
	Consider a sequence $(\beta_N, N \ge 1)$ such that \eqref{eq:cond_truncation} 
	holds and the corresponding truncated r.v.\ $\tilde{\xi}_j = \xi_j
	\wedge \beta_N$, with distribution $\tilde{f}_i$. Then    
	\begin{itemize}
		\item [(i)] 
		as $N \to \infty$,
		\[
		\Big( \prod_{j=1}^{sN} \tilde{f}'_j(1), s \ge 0\Big)
		\to 
		(e^{X_s}, s \ge 0),
		\]
		in the usual Skorohod sense.
                \item [(ii)] for all $t > 0$ such that $\sigma^2$ is continuous at $t$
		\[
		\frac{1}{\kappa_N} \sum_{i =1}^{tN} \tilde{f}''_i(1) \to 
		\sigma^2(t), 
		\quad \text{as $N \to \infty$.}
		\]
	\end{itemize}
	The sequence of truncated environments also fulfils that
	\begin{equation} \label{eq:moment_bound}
		\EE{\tilde{\xi}_j^{(k)}} \leq \beta_N^{k-2} f_j''(1),
	\end{equation}
	for all $k \geq 2$, where $\EE{X^{(k)}}=\EE{X(X-1)\cdots (X-k+1)}$
	denotes the $k$-th factorial moment of a random variable $X$.
\end{lem}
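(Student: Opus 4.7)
The plan is to establish (i), (ii), and \eqref{eq:moment_bound} by separately controlling the truncation error, relying on the elementary identity $\xi_j - \tilde{\xi}_j = (\xi_j - \beta_N)\mathbbm{1}_{\xi_j > \beta_N}$ and on the two decay estimates assumed in \eqref{eq:cond_truncation}.

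I would first dispatch (ii) and \eqref{eq:moment_bound}, which are comparatively immediate. For (ii), a direct computation yields $0 \leq f_j''(1) - \tilde{f}_j''(1) \leq \mathbb{E}[\xi_j^2 \mathbbm{1}_{\xi_j > \beta_N}]$; summing over $j \leq tN$, dividing by $\kappa_N$, and applying the second half of \eqref{eq:cond_truncation} gives $\frac{1}{\kappa_N}\sum_{j \leq tN} (f_j''(1) - \tilde{f}_j''(1)) \to 0$, so (ii) is reduced to \eqref{assumption_variance}. For the moment bound, since $\tilde{\xi}_j \leq \beta_N$ I can bound all but two factors of $\tilde{\xi}_j^{(k)}$ by $\beta_N$, writing $\tilde{\xi}_j^{(k)} \leq \beta_N^{k-2}\tilde{\xi}_j(\tilde{\xi}_j - 1) \leq \beta_N^{k-2}\xi_j(\xi_j - 1)$ (the last inequality uses $\tilde{\xi}_j \leq \xi_j$ and that $x \mapsto x(x-1)$ is nondecreasing on $\mathbb{N}_0$); taking expectations concludes.

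The main difficulty is (i), where I must compare \emph{products} rather than sums, and upgrade the resulting control to a Skorohod convergence. Setting $\epsilon_j = (f_j'(1) - \tilde{f}_j'(1))/f_j'(1) \in [0,1)$, the estimate $f_j'(1) - \tilde{f}_j'(1) \leq \mathbb{E}[\xi_j\mathbbm{1}_{\xi_j > \beta_N}]$, combined with the uniform lower bound on $f_j'(1)$ provided by Lemma~\ref{lemma_jump_bound} and the first half of \eqref{eq:cond_truncation}, forces $\sum_{j \leq tN} \epsilon_j \to 0$. The elementary inequality $1 - \prod_{j=1}^{n}(1-\epsilon_j) \leq \sum_{j=1}^{n}\epsilon_j$ (valid for $\epsilon_j \in [0,1]$) then yields $\sup_{s \leq t}\bigl|\tilde{\mu}_{sN}/\mu_{sN} - 1\bigr| \to 0$. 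Since $\mu_{sN}$ is uniformly bounded on $[0,t]$ as a consequence of \eqref{Cond_on_the_environment}, I conclude that $\sup_{s \leq t}|\tilde{\mu}_{sN} - \mu_{sN}| \to 0$, i.e.\ the truncated mean is a uniform perturbation of the untruncated one.

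The last step is to transfer this uniform approximation into a Skorohod convergence: picking time changes $\lambda_N \to \mathrm{id}$ uniformly with $\mu_{\lambda_N(s) N} \to e^{X_s}$ uniformly (which exist by \eqref{Cond_on_the_environment}), the same $\lambda_N$ satisfy $\tilde{\mu}_{\lambda_N(s) N} \to e^{X_s}$ uniformly by the triangle inequality, which is precisely the Skorohod convergence in (i). The main obstacle is thus pairing a multiplicative control on the truncation error (which is where the uniform lower bound on $f_j'(1)$ enters nontrivially) with the fact that uniform perturbations are invisible to the Skorohod topology; everything else is an accounting exercise.
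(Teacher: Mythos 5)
Your proof is correct and follows essentially the same strategy as the paper: control $f_j'(1)-\tilde f_j'(1)$ and $f_j''(1)-\tilde f_j''(1)$ via the sums in \eqref{eq:cond_truncation}, use the uniform bound on $f'_j(1)$ from Lemma~\ref{lemma_jump_bound}, and absorb the resulting uniform-on-compacts perturbation into the Skorohod convergence of $\mu_{sN}$. The only cosmetic differences are that you work with products and the inequality $1-\prod(1-\epsilon_j)\le\sum\epsilon_j$ (normalizing by $f'_j(1)$, which neatly avoids the paper's extra step of lower-bounding $\tilde f'_j(1)$) rather than with logarithms, and you prove \eqref{eq:moment_bound} pathwise via $\tilde\xi_j^{(k)}\le\beta_N^{k-2}\tilde\xi_j(\tilde\xi_j-1)\le\beta_N^{k-2}\xi_j(\xi_j-1)$ instead of expanding the factorial-moment sum.
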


\begin{proof}
	To show that \eqref{eq:moment_bound} holds, note that
	\begin{align}
		\EE{\tilde{\xi}_j^{(k)}} &= \sum_{i=k}^{\beta_N} i(i-1)\cdots (i-k+1) f_j[k] + \beta_N^{(k)} \pp{\xi_j > \beta_N} \\
		&\leq \beta_N^{k-2} \left( \sum_{i=k}^{\beta_N} i(i-1) f_j[k] + \beta_N^{2} \pp{\xi_j >\beta_N}\right) \leq \beta_N^{k-2} f_j''(1).
	\end{align}
	
        We now prove (i). By the continuous mapping theorem it is sufficient to prove
	\begin{align}
		\sum_{j=1}^{sN} \log \tilde{f}_j'(1) \to X_s
	\end{align}
	for the Skorohod topology. Therefore, consider
	\begin{align}
		\sum_{j=1}^{sN} \log \tilde{f}_j'(1)
		&=  \sum_{j=1}^{sN} \log \frac{\tilde{f}_j'(1)}{f_j'(1)}
		+ \sum_{j=1}^{sN} \log f_j'(1). \label{convergence 2 sums}
	\end{align}
	By assumption \eqref{Cond_on_the_environment}, the second term in
	\eqref{convergence 2 sums} converges to $(X_s, s \ge 0)$. Therefore, we only
	need to prove that the first term converges to $0$ uniformly. Note that
	$\tilde{f}_j'(1)\leq f_j'(1)$, hence
	\begin{align}
		\left| \sum_{j=1}^{sN} \log \frac{\tilde{f}_j'(1)}{f_j'(1)} \right| 
		&= \sum_{j=1}^{sN} \log \frac{f_j'(1)}{\tilde{f}_j'(1)} 
		\leq \sum_{j=1}^{sN} \left( \frac{f_j'(1)}{\tilde{f}_j'(1)} -1 \right)  \\
		&=\sum_{j=1}^{sN} \frac{f_j'(1)-\tilde{f}_j'(1)}{\tilde{f}_j'(1)}.
	\end{align}
	
	By \eqref{eq:cond_truncation} we have 
	\begin{align} 
		\sum_{j=1}^{sN} (f_j'(1) - \tilde{f}_j'(1)) 
		&\le \sum_{j=1}^{sN} \E[\xi_j\1_{\xi_j > \beta_N}] 
		\to 0, \quad \text{as $N \to \infty$}. \label{eq:truncation_bound}
	\end{align}
        Point (i) is thus proved if we can lower bound $\tilde{f}'_j(1)$ uniformly in
	$N$ and $j \le sN$. Write
	\[
	\tilde{f}'_j(1) = f'_j(1) - (f'_j(1) - \tilde{f}'_j(1))
	\ge f'_j(1) - \sum_{i=1}^{sN} (f'_i(1) - \tilde{f}'_i(1) ).
	\]
	The second term is going to $0$ by \eqref{eq:truncation_bound}, whereas
	$f'_j(1)$ is uniformly bounded away from $0$ by
	Lemma~\ref{lemma_jump_bound}, hence we obtain the required bound.
	
        For point (ii), notice that
	\[
	\frac{1}{\kappa_N} \sum_{j=1}^{sN} \tilde{f}''_j(1)
	= \frac{1}{\kappa_N} \sum_{j=1}^{sN} f''_j(1)
	+ \frac{1}{\kappa_N} \sum_{j=1}^{sN} (\tilde{f}''_j(1) - f''_j(1) ),
	\]
	and by assumption \eqref{assumption_variance}, we get the right limit.
\end{proof}

\begin{lem} \label{Lemma_survival_cut_off_and_normal}
    Consider a sequence of BPVEs satisfying \eqref{Cond_on_the_environment},
	and a sequence $(\beta_N, N \ge 1)$ fulfilling
        \eqref{eq:cond_truncation}. Let $\tilde{Z}_{tN}$ be the size of the
	BPVE at time $tN$ whose offspring law is truncated by $\beta_N$. Then
	\[
            \PP( Z_{tN} - \tilde{Z}_{tN} > \epsilon \kappa_N \mid Z_{tN} > 0 ) \to 0
	\quad \text{as $N \to \infty$.}
	\]
\end{lem}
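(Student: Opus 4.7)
The plan is to couple the two processes via the natural construction $\tilde{\xi}_{i,n} = \xi_{i,n} \wedge \beta_N$ sharing the same randomness, so that an easy induction yields $\tilde{Z}_n \le Z_n$ almost surely for all $n$. In particular, $Z_{tN} - \tilde{Z}_{tN} \ge 0$, and since $\tilde Z_{tN} \geq 0$ the event $\{Z_{tN} - \tilde{Z}_{tN} > \epsilon \kappa_N\}$ is contained in $\{Z_{tN} > 0\}$. Thus
\[
    \PP\big(Z_{tN} - \tilde{Z}_{tN} > \epsilon \kappa_N \mid Z_{tN} > 0\big)
    = \frac{\PP(Z_{tN} - \tilde{Z}_{tN} > \epsilon \kappa_N)}{\PP(Z_{tN} > 0)},
\]
and I will bound the numerator using Markov's inequality and the denominator using the Kolmogorov asymptotics of Theorem~\ref{thm:kolmogorov}.

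For the numerator, I would compute $\E[Z_{tN} - \tilde{Z}_{tN}] = \mu_{tN} - \tilde{\mu}_{tN}$ with $\tilde{\mu}_n \coloneqq \prod_{j=1}^n \tilde{f}'_j(1)$, and expand the difference of products as a telescoping sum
\[
    \mu_{tN} - \tilde{\mu}_{tN}
    = \sum_{j=1}^{tN} \big( f'_j(1) - \tilde{f}'_j(1) \big)
        \prod_{i<j} \tilde{f}'_i(1) \prod_{i>j} f'_i(1).
\]
By \eqref{Cond_on_the_environment} and Lemma~\ref{Lemma_properties_cut_off_version}(i), both $\mu_{tN}$ and $\tilde{\mu}_{tN}$ are uniformly bounded, and the means $f'_j(1)$ are uniformly bounded away from $0$ by Lemma~\ref{lemma_jump_bound}, so the product factor is uniformly bounded by some constant $C$. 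Together with the identity $f'_j(1) - \tilde{f}'_j(1) = \E[\xi_j \1_{\xi_j > \beta_N}] - \beta_N \PP(\xi_j > \beta_N) \leq \E[\xi_j \1_{\xi_j > \beta_N}]$, this yields
\[
    \mu_{tN} - \tilde{\mu}_{tN} \le C \sum_{j=1}^{tN} \E[\xi_j \1_{\xi_j > \beta_N}] \xrightarrow[N \to \infty]{} 0
\]
by the first part of \eqref{eq:cond_truncation}. Markov's inequality then gives $\PP(Z_{tN} - \tilde{Z}_{tN} > \epsilon \kappa_N) = o(1/\kappa_N)$.

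Finally, since $t$ satisfies the assumptions of Theorem~\ref{thm:kolmogorov} (coming from the context of Theorem~\ref{thm:yaglom}), that theorem gives $\PP(Z_{tN} > 0) \sim C_t/\kappa_N$ with $C_t > 0$. Combining the two bounds, the ratio above is $o(1)$, which is the desired conclusion. The argument is essentially routine once the coupling is in place; the only mildly delicate point is controlling $\mu_{tN} - \tilde{\mu}_{tN}$ at the right rate, which is handled by the product-difference expansion above together with the uniform bounds provided by Lemma~\ref{lemma_jump_bound}.
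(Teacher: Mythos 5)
Your proof is correct and takes essentially the same route as the paper: coupling via $\tilde\xi=\xi\wedge\beta_N$, Markov's inequality on $\E[Z_{tN}-\tilde Z_{tN}]$, and Theorem~\ref{thm:kolmogorov} for the denominator. The only cosmetic difference is that you justify $\mu_{tN}-\tilde\mu_{tN}\to 0$ directly via a telescoping-product bound and \eqref{eq:cond_truncation}, whereas the paper cites the Skorohod convergence of both mean products to $e^{X_t}$ from Lemma~\ref{Lemma_properties_cut_off_version}(i).
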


\begin{proof}
    By Lemma~\ref{Lemma_properties_cut_off_version},
    \[
        \E\big[ Z_{tN} - \tilde{Z}_{tN} \big]
        = 
        \prod_{j=1}^{tN} f'_j(1) - \prod_{j=1}^{tN} \tilde{f}'_j(1)
        \to 0 \quad \text{as $N \to \infty$.}
    \]
    We obtain
	\[
            \PP( Z_{tN} - \tilde{Z}_{tN} > \epsilon \kappa_N \mid Z_{tN} > 0 ) 
            \le \frac{\E[Z_{tN} - \tilde{Z}_{tN}]}{\epsilon \kappa_N\PP(Z_{tN} > 0)}.
	\]
	Due to Theorem~\ref{thm:kolmogorov}, the denominator is
	converging to some constant and the result follows.
\end{proof}

\subsection{Proof of the Gromov-weak convergence}

We now prove that the rescaled genealogy converges in the Gromov-weak topology.
Recall that $[T_{tN}, d_{tN}, \lambda_{tN}]$ is the random metric measure
space representing the genealogy of the population at generation $tN$.

\begin{thm} \label{thm:yaglom_gromov_weak}
	Fix $t > 0$. Under the assumptions of Theorem~\ref{thm:kolmogorov} 
	the following convergence holds in the Gromov-weak topology
	\begin{align}
            \lim_{N \to \infty} \left[ T_{tN}, \tfrac{d_{tN}}{N} ,
		\tfrac{\lambda_{tN}}{\kappa_N} \right]= \left[(0,Z_e), d_{\nu_e}, \Leb \right],
	\end{align}
	with $[ (0,Z_e), d_{\nu_e}, \Leb]$ the environmental CPP as defined in \eqref{eq:env_cpp}.
\end{thm}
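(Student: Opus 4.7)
The plan is to apply the method of moments (Proposition~\ref{thm:method_of_moments}): I will compute the limit of $\E[\Phi([T_{tN}, d_{tN}/N, \lambda_{tN}/\kappa_N])]$ for every polynomial $\Phi$ conditional on survival, verify that it coincides with the moments of the environmental CPP given by Proposition~\ref{Prop_Moments_CPP}, and check the technical growth condition on the limit (which holds trivially since $Z_e$ is exponential, so $\E[Z_e^k]^{1/k}/k \to \bar{\rho}_t/e$). The conditioning on $\{Z_{tN}>0\}$ is necessary because $\PP(Z_{tN}=0)\to 1$ and the unconditioned genealogy degenerates to the empty metric measure space. The main difficulty is that only a second moment of the offspring is assumed, so the higher moments of $\lambda_{tN}/\kappa_N$ are typically infinite; to bypass this I would first work with a truncated BPVE and then transfer the convergence using Corollary~\ref{cor:coupling_convergence}.

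Concretely, fix $\beta_N = o(\kappa_N)$ satisfying \eqref{eq:cond_truncation} (such a sequence exists by Lemma~\ref{lem:correct_truncation}) and construct the truncated BPVE with offspring $\tilde{\xi}_i = \xi_i \wedge \beta_N$, coupled to the original one so that $\tilde{T}_{tN}$ is a closed subtree of $T_{tN}$. By Lemma~\ref{Lemma_properties_cut_off_version}, the truncated BPVE still satisfies the near critical assumptions \eqref{Cond_on_the_environment} and \eqref{assumption_variance}, together with the moment growth assumption \eqref{eq:moment_growth}. Moreover, since the truncated offspring is bounded by $\beta_N$, the strong Lindeberg condition \eqref{eq:uniform_integrability} is automatic, and Theorem~\ref{thm:kolmogorov} applies to the truncated process as well. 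Proposition~\ref{prop:moment_bpve} then yields
\[
    \frac{1}{\kappa_N^{k-1}} \E\Big[ \sum_{\substack{u_1, \dots, u_k \in \tilde{T}_{tN} \\ (u_i)\text{ distinct}}} \phi\big((\tilde{d}_{tN}(u_i,u_j)/N)_{i,j}\big) \Big] \to e^{X_t} \bar{\rho}_t^{k-1} k!\, \E[\phi((H_{\sigma_i,\sigma_j})_{i,j})],
\]
while the contribution from non-distinct tuples is of order $\kappa_N^{k-2}$ and hence negligible at this scaling. Dividing by $\kappa_N \PP(\tilde{Z}_{tN}>0) \to e^{X_t}/\bar{\rho}_t$ (Theorem~\ref{thm:kolmogorov} for the truncated process) gives the conditional moment limit $\bar{\rho}_t^k k!\,\E[\phi((H_{\sigma_i,\sigma_j})_{i,j})]$, which matches Proposition~\ref{Prop_Moments_CPP}. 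The method of moments then provides the Gromov-weak convergence of the truncated genealogy, conditional on survival, to the environmental CPP.

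To finish, I transfer the convergence from the truncated to the original BPVE. Lemma~\ref{Lemma_survival_cut_off_and_normal} shows that $|\lambda_{tN}(T_{tN}) - \tilde{\lambda}_{tN}(\tilde{T}_{tN})|/\kappa_N = (Z_{tN} - \tilde{Z}_{tN})/\kappa_N \to 0$ in probability conditional on survival, so Corollary~\ref{cor:coupling_convergence} promotes the truncated Gromov-weak convergence to the same convergence for $[T_{tN}, d_{tN}/N, \lambda_{tN}/\kappa_N]$. A last subtle point is that the events $\{\tilde{Z}_{tN}>0\} \subseteq \{Z_{tN}>0\}$ have the same leading order probability $e^{X_t}/(\kappa_N \bar{\rho}_t)$ by Theorem~\ref{thm:kolmogorov} applied on both sides, so conditioning on either event is asymptotically equivalent. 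The main technical delicacy lies in the joint calibration of $\beta_N$: large enough for the truncation loss in \eqref{eq:cond_truncation} to vanish, and yet $o(\kappa_N)$ so that the non-binary branch points in the bias $\Delta_k$ become negligible in Proposition~\ref{prop:moment_bpve}; the existence of such a sequence is precisely what Lemma~\ref{lem:correct_truncation} extracts from the Lindeberg-type assumption \eqref{eq:lindeberg}.
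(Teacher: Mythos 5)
Your proposal follows the paper's proof essentially step for step: truncate so that \eqref{eq:moment_growth} holds, apply the method of moments via Proposition~\ref{prop:moment_bpve} and match against Proposition~\ref{Prop_Moments_CPP}, then transfer back to the original process with Corollary~\ref{cor:coupling_convergence} and Lemma~\ref{Lemma_survival_cut_off_and_normal}. You are in fact slightly more explicit than the paper about the asymptotic equivalence of conditioning on $\{Z_{tN}>0\}$ versus $\{\tilde Z_{tN}>0\}$, which the paper elides.

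One small slip in the justification, not in the conclusion: you assert that the strong Lindeberg condition \eqref{eq:uniform_integrability} holds ``automatically'' for the truncated process because $\tilde\xi_k \le \beta_N$. Lemma~\ref{lem:correct_truncation} only guarantees $\beta_N = o(\kappa_N)$, not $\beta_N = o(\sqrt{\kappa_N})$, so $\beta_N$ may well exceed $\epsilon\sqrt{\kappa_N}$ and the indicator $\1_{\tilde\xi_k\ge\epsilon\sqrt{\kappa_N}}$ need not vanish. The correct argument is domination: since $\tilde\xi_k\le\xi_k$ pointwise, one has $\tilde\xi_k^2\,\1_{\tilde\xi_k\ge\epsilon\sqrt{\kappa_N}} \le \xi_k^2\,\1_{\xi_k\ge\epsilon\sqrt{\kappa_N}}$, so the Lindeberg sum for the truncated environment is bounded by that of the original, which tends to zero by assumption. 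With that fix the application of Theorem~\ref{thm:kolmogorov} to the truncated process is justified and the rest of your argument is sound.
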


\begin{proof}
	By Lemma~\ref{lem:correct_truncation} there exists a sequence $\beta_N =
	o(\kappa_N)$ fulfilling \eqref{eq:moment_growth}. Consider the
	corresponding truncated BPVE $(\tilde{Z}_n, n \ge 1)$, and the
        corresponding metric measure space $[\tilde{T}_{tN},
        \tilde{d}_{tN}, \tilde{\lambda}_{tN}]$ giving its genealogy at
        time $tN$. By Lemma~\ref{Lemma_survival_cut_off_and_normal}, for
        any $\epsilon > 0$, 
	\[
            \PP(Z_{tN} - \tilde{Z}_{tN} \ge \epsilon \kappa_N \mid Z_{tN} > 0) \to 0
            \quad \text{as $N \to \infty$.}
	\]
        Applying Corollary~\ref{cor:coupling_convergence} to $[T_{tN},
        \tfrac{d_{tN}}{N}, \tfrac{\lambda_{tN}}{\kappa_N}]$ and
        $[\tilde{T}_{tN}, \tfrac{\tilde{d}_{tN}}{N}, \tfrac{\tilde{\lambda}_{tN}}{\kappa_N}]$ 
        under the measure $\PP(\: \cdot \mid Z_{tN} > 0)$ entails that the
        result is proved if we can show that $[\tilde{T}_{tN},
        \tfrac{\tilde{d}_{tN}}{N}, \tfrac{\tilde{\lambda}_{tN}}{\kappa_N}
        ]$ converges to the environmental CPP. Up to replacing $T_{tN}$
        by $\tilde{T}_{tN}$, we can therefore assume that the environment
        fulfils
        \eqref{eq:moment_growth}.
	
        By Proposition~\ref{thm:method_of_moments}, we need to show the
        following convergence, for any polynomial $\Phi$,
	\begin{align}
            \lim_{N \to \infty} \EE{\Phi(T_{tN},\tfrac{d_{tN}}{N}, \tfrac{\lambda_{tN}}{\kappa_N})}
		= \EE{\Phi((0,Z_e), d_{\nu_e}, \Leb)}. \label{convergence Gromov}
	\end{align}
	Note that due to Proposition~\ref{Prop_Moments_CPP} the right hand side
	of \eqref{convergence Gromov} translates to
	\begin{align}
		\EE{\Phi( (0,Z_e) ,d_{\nu_e}, \Leb)} 
		= k! \bar{\rho}_t^k \EE{\phi\big( (H_{\sigma_i,\sigma_j})_{i,j} \big))},
	\end{align}
        where $(H_{i,j})_{i,j}$ are given by \eqref{eq:definition_H_moment} 
        and $\sigma\colon [k]\to [k]$ is an independent random
        permutation of $[k]$. Therefore, we show that the left hand side
        in \eqref{convergence Gromov} converges to the above limit. 
	
        Using the many-to-few formula, we have shown in
        Proposition~\ref{prop:moment_bpve} that 
	\begin{align}
            \E\Big[\Phi(T_{tN}, \tfrac{d_{tN}}{N}, \tfrac{\lambda_{tN}}{\kappa_N}) \mid Z_{tN} > 0\Big]&= \frac{1}{\kappa_N \pp{Z_{tN}>0}} \frac{1}{\kappa_N^{k-1}}
		\EE{\sum_{(u_1,\dots,u_k)\in T_{tN}} \phi\big(
                \big(\tfrac{d_{tN}(u_i,u_j)}{N}\big)_{i,j}\big)} \\
		&\sim \frac{1}{\kappa_N \pp{Z_{tN}>0}} \frac{1}{\kappa_N^{k-1}}
		\EE{\sum_{\substack{(u_1,\dots,u_k)\in T_{tN}\\ u_1 \ne \dots \ne u_k}} 
                \phi\big( \big(\tfrac{d_{tN}(u_i,u_j)}{N}\big)_{i,j}\big)} \\
		&\sim \bar{\rho}_t e^{-X_t} \cdot e^{X_t} \bar{\rho}_t^{k-1}
		k!\, \EE{\phi\big( (H_{\sigma_i,\sigma_j})_{i,j} \big)},
	\end{align}
	where we applied Theorem~\ref{thm:kolmogorov} to obtain the asymptotics of the survival probability. We recover the moments of the environmental CPP and this
	completes the proof.
\end{proof}

\subsection{Completing the proof of Theorem~\ref{thm:yaglom}} 
\label{sec:completingProof}

Before completing the proof of the main result, we need the following
consequence of the Gromov-weak convergence. Fix $t > 0$ and recall the
notation $Z_{i, tN}$ for the number of individuals at generation $i$
having descendants at generation $tN$. For $j \le Z_{i,tN}$, let us
further denote by $\abs{T_{tN}}(j)$ the number of descendants of the
$j$-th individual at generation $i$ whose offspring survives until time $tN$. 

\begin{cor} \label{cor:convergence_survivors}
	Fix $s < t$. Then conditional on survival at time $tN$, 
	\[
	\Big( \frac{\abs{T_{tN}}(1)}{\kappa_N}, \dots, \frac{\abs{T_{tN}}(Z_{sN,tN})}{\kappa_N} \Big) 
	\to \big( E_1, \dots, E_K \big),
	\quad \text{as $N \to \infty$}
	\]
	in distribution, where $(E_j, j \ge 1)$ are i.i.d.\ exponential
	random variables with mean
	\[
	\frac{1}{2} \int_{[s,t]} e^{X_t-X_u} \sigma^2(du),
	\]
	and $K$ is an independent geometric random variable on $\N$ with success parameter $\frac{1}{1+c}$ 
	with 
	\begin{equation} \label{eq:geometricConstant}
		c = \frac{\int_{[0,s]} e^{-X_u} \sigma^2(du)}{\int_{[s,t]} e^{-X_u} \sigma^2(du)}.
	\end{equation}
\end{cor}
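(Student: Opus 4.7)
The plan is to deduce this corollary from the GHP convergence of Theorem~\ref{thm:yaglom}. The key structural observation is that the subtree sizes $(\abs{T_{tN}}(j))_{j=1}^{Z_{sN,tN}}$ are exactly the $\lambda_{tN}$-masses of the balls of radius $t-s$ in the rescaled ultrametric space $[T_{tN}, d_{tN}/N]$. Indeed, two leaves $v, v' \in T_{tN}$ share a common ancestor at or after generation $sN$ iff $\abs{v \wedge v'} \geq sN$ iff $d_{tN}(v,v')/N \leq t-s$, so the partition of $T_{tN}$ into the descendant families of the $Z_{sN,tN}$ surviving ancestors coincides with the partition into balls of radius $t-s$ (the rescaled distances being integer multiples of $1/N$, open and closed balls agree for generic $s,t$). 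Proving the corollary thus amounts to proving the convergence of the random point measure $\sum_{B \in B_{t-s}(T_{tN})} \delta_{\lambda_{tN}(B)/\kappa_N}$ to $\sum_{j=1}^{K} \delta_{E_j}$.

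The first step would be to establish an almost-sure continuity statement in the spirit of Proposition~\ref{thm:continuity_reduced}: if $([U_n, d_n, \nu_n])_n$ converges in the GHP topology to $[U, d, \nu]$, then $\sum_{B \in B_r(U_n)} \delta_{\nu_n(B)} \to \sum_{B \in B_r(U)} \delta_{\nu(B)}$ at every radius $r$ that is a continuity point of the reduced process $(L_t(U), t > 0)$. Once the Hausdorff distance between supports is small, balls of radius $r$ on both sides are matched bijectively up to a small perturbation of the radius, and the Prohorov part of the GHP distance then controls the differences of individual ball masses. Since the Poisson point process defining $d_{\nu_e}$ almost surely has no atom at height exactly $t-s$, the environmental CPP is almost surely a continuity point of this functional, and the continuous mapping theorem applied to Theorem~\ref{thm:yaglom} yields the desired convergence.

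The second step is to identify the limit. By Proposition~\ref{prop:time_change_cpp}, the $(t-s)$-balls of $d_{\nu_e}$ coincide with the $F(t-s)$-balls of $d_B$. The atoms of the underlying Poisson point process $P$ of height above $F(t-s)$ form an independent Poisson process on $[0,\infty)$ of rate $1/F(t-s) - 1$, which a short computation shows to equal $c$. Superimposing this Poisson process with the endpoint $Z_B$, itself the first arrival of an independent unit-rate exponential, produces a rate-$(1+c)$ Poisson process whose first $K$ interarrival gaps tile $(0, Z_B)$. Standard Poisson properties then give that $K$ is geometric on $\N$ with success parameter $1/(1+c)$ and that the gap lengths are iid $\mathrm{Exp}(1+c)$, independent of $K$. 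Rescaling by $\bar{\rho}_t$ as prescribed by Proposition~\ref{prop:time_change_cpp} turns the gap lengths into iid exponentials with mean $\bar{\rho}_t/(1+c) = \tfrac{1}{2}\int_{[s,t]} e^{X_t - X_u}\sigma^2(du)$, matching the statement.

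The main obstacle will be the first step. Proposition~\ref{thm:continuity_reduced} only controls the number of balls under GHP convergence, and promoting this into a convergence of individual ball masses requires a quantitative refinement (matching balls using the Hausdorff distance and then comparing their masses using the Prohorov distance). A self-contained alternative avoiding continuity altogether would be to compute joint factorial moments of $(\abs{T_{tN}}(j))_j$ directly via the many-to-few formula of Proposition~\ref{prop:many_to_few}, combined with the truncation of Section~\ref{sec:truncation}; as in the proof of Theorem~\ref{thm:yaglom_gromov_weak}, the non-binary $k$-spine contributions would vanish and the binary part would reproduce the mixture of iid exponentials by a geometric number.
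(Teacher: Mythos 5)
Your main route is circular. In the paper's logical structure, Corollary~\ref{cor:convergence_survivors} is proved \emph{before} Theorem~\ref{thm:yaglom} and is used in the proof of Theorem~\ref{thm:yaglom}: the corollary is precisely what certifies the lower-mass-bound tightness condition~\eqref{eq:ballMass} needed to apply Proposition~\ref{prop:Gweak2GHP} and upgrade the Gromov-weak convergence of Theorem~\ref{thm:yaglom_gromov_weak} to the GHP convergence of Theorem~\ref{thm:yaglom}. Starting from Theorem~\ref{thm:yaglom} and working backwards with a continuity-of-ball-masses lemma therefore assumes what is to be established. (The continuity refinement of Proposition~\ref{thm:continuity_reduced} that you outline is plausible in itself, and your Poisson-process identification of the limit in Step~2 is correct; the issue is purely one of logical order.)

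The paper's actual argument is both non-circular and more elementary: it uses only Theorem~\ref{thm:yaglom_gromov_weak} (Gromov-weak convergence), Theorem~\ref{thm:kolmogorov}, and the branching property. Decompose at generation $sN$. Conditional on $\{Z_{sN}>0\}$, Theorem~\ref{thm:yaglom_gromov_weak} gives $Z_{sN}/\kappa_N \to \mathscr{E}(\bar{\rho}_s)$. By the branching property, each of those $Z_{sN}$ individuals independently survives to generation $tN$ with probability $\PP(Z_{tN}>0 \mid Z_{sN}=1) \sim \tfrac{2}{\kappa_N}\big(\int_{[s,t]}e^{X_s-X_u}\sigma^2(du)\big)^{-1}$ by Theorem~\ref{thm:kolmogorov} applied to the time-shifted process. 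Thinning an exponential population by an independent Bernoulli probability of this order produces a geometric count with the stated parameter; a second application of Theorem~\ref{thm:yaglom_gromov_weak} to each surviving family (started from generation $sN$, conditioned on survival to $tN$) gives that each rescaled family size converges to an independent exponential with the stated mean. Finally, conditioning on $\{Z_{tN}>0\}$ amounts to conditioning on the geometric count being positive, which is the shift to a geometric on $\N$. Your proposed many-to-few alternative in the last paragraph is also viable and non-circular, but it is considerably heavier machinery than the branching decomposition for what is in the end a one-generation conditioning argument.
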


\begin{proof}
	By Theorem~\ref{thm:yaglom_gromov_weak}, conditional on survival at
	time $sN$, the population size, rescaled by $\kappa_N$, converges to
	an exponential distribution with mean $\bar{\rho}_s$. By the
	branching property, each of these individuals has a probability
	$\PP(Z_{tN} > 0 \mid Z_{sN} = 1)$ of leaving some descendants at
	time $tN$. Using Theorem~\ref{thm:kolmogorov}, since by
	assumption \eqref{Cond_on_the_environment}, for $u > s$,
	$\E[Z_{uN} \mid Z_{sN} = 1] \to e^{X_u-X_s}$, we have
	\[
	\PP(Z_{tN} > 0 \mid Z_{sN} = 1) \sim \frac{2}{\kappa_N}
	\frac{1}{\int_{[s,t]} e^{X_s-X_u} \sigma^2(du)}.
	\]
	These two facts readily imply that, conditional on survival at time
	$sN$, $Z_{sN,tN} \to K'$ in distribution with
	\[
	\forall k \ge 0,\quad \PP(K' = k) = 
	\frac{1}{1+c} \Big( \frac{c}{1+c} \Big)^k,
	\]
	where the constant $c$ is defined in  \eqref{eq:geometricConstant}.
	By the branching property, the number of descendants of each of these
	$Z_{sN,tN}$ individuals is distributed as $Z_{tN}$ conditional on
	$\{Z_{sN} = 1, Z_{tN} > 0\}$. Thus, another application of
	Theorem~\ref{thm:yaglom_gromov_weak} shows that, for any
        continuous bounded $\phi$,
	\[
	\E[ \phi(Z_{tN}) \mid Z_{tN} > 0, Z_{sN} = 1 ]
	\to 
	\E[ \phi(E_1) ],
	\quad\text{as $N \to \infty$}.
	\]
        The result is proved by noting that further conditioning on
        $\{Z_{tN} > 0\}$ only amounts to conditioning on $\{K' > 0\}$,
        which is a geometric random variable shifted by $1$, as claimed.
\end{proof}

\begin{proof}[Proof of Theorem~\ref{thm:yaglom}]
	We apply Proposition~\ref{prop:Gweak2GHP}. We only need to check that 
	\eqref{eq:ballMass} holds for the sequence of random metric measure
        spaces $([T_{tN}, \tfrac{d_{tN}}{N}, \tfrac{\lambda_{tN}}{\kappa_N}])_N$.
	In the context of the BPVE, the balls of radius $t-s$, with $s < t$,
	correspond to the ancestral lineages at time $s$. That is, each ball
	corresponds to one of the $Z_{sN,tN}$ individuals at time $sN$ having
	descendants at $tN$. Moreover, the mass of a ball is the number
	of descendants of the corresponding ancestor, rescaled by $\kappa_N$.
	In the notation of Corollary~\ref{cor:convergence_survivors}, this
	corresponds to the random variables $(\tfrac{\abs{T_{tN}}(i)}{\kappa_N}, 
	i \le Z_{sN,tN})$. By Corollary~\ref{cor:convergence_survivors}, the
	number of such balls converges to a geometric random variable, and
	the mass of the balls converge to i.i.d.\ exponential random
	variables. This readily shows that condition \eqref{eq:ballMass} is
	fulfilled, proving the result.
\end{proof}

\begin{proof}[Proof of Corollary~\ref{cor:three_consequences}]
        Both maps $[X,d,\nu] \mapsto \nu(X)$ and  $[X,d,\nu] \mapsto
        [X,d,\tfrac{\nu}{\nu(X)}]$ are continuous w.r.t.\ the Gromov-weak
        topology. Since the BPVE converges in the GHP topology by
        Theorem~\ref{thm:yaglom} this readily shows (i), and since for
        any functional $\phi$, $\E\big[ \phi\big( \big(\tfrac{d_{tN}(U_i,
        U_j)}{N}\big)_{i,j}\big) \big]$ is nothing but the moment of
        order $k$ of $[(0,Z_e), d_{\nu_e}, Z_e^{-1} \Leb]$ corresponding to
        $\phi$, Proposition~\ref{prop:sample_cpp_unbiased} proves (iii).
        The finite-dimensional part of point (ii) follows from
        Proposition~\ref{thm:continuity_reduced}. For the Skorohod part,
        note that if $\sigma^2$ is continuous, the reduced process
        of the corresponding environmental CPP only makes jump of
        size $1$ almost surely, and the result follows again from
        Proposition~\ref{thm:continuity_reduced}.
\end{proof}

\bookmarksetup{startatroot}
\belowpdfbookmark{Acknowledgements}{Acknowledgements}

\section*{Acknowledgements}

We would like to thank two referees for their careful reading of our work
and their many comments, which have led to a substantial improvement of
the exposition of our results.
FFR acknowledges financial support from MfPH, the AXA Research Fund, the
Glasstone Research Fellowship, and thanks Magdalen College Oxford for a
senior Demyship.

\bibliography{bpve.bib}
\bibliographystyle{plain}

\appendix

\section{Appendix} \label{sec: Appendix}

\begin{lem} \label{lem:counting_trees}
	For each $N$ let $g_N \colon \{0, \dots, N-1\} \to [0,\infty)$ and
	let $S_k$ be the uniform ultrametric tree with $k$ leaves. 
	For any $k \ge 1$,
	\[
	\E\Big[ \prod_{u \in B(S_k)} g_N(\abs{u}) \1_{\abs{B(S_k)} = b} \Big] 
	= O_N\Big(\frac{1}{N^{k-1}} \Big( \sum_{i=0}^{N-1} g_N(i) \Big)^b \Big).
	\]
\end{lem}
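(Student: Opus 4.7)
The plan is to reduce the expectation to an elementary combinatorial sum by exploiting that $S_k$ is uniform on planar ultrametric trees. Since $H_1,\dots,H_{k-1}$ are i.i.d.\ uniform on $\{1,\dots,N\}$ and $\mathcal{H}$ is a bijection between planar ultrametric trees of height $N$ with $k$ leaves and $\{1,\dots,N\}^{k-1}$, we have $\PP(S_k=\tau)=N^{-(k-1)}$ for each such $\tau$, and hence
\[
\E\Big[\prod_{u\in B(S_k)} g_N(\abs{u})\,\1_{\abs{B(S_k)}=b}\Big] = \frac{1}{N^{k-1}}\sum_{\tau\,:\,\abs{B(\tau)}=b}\prod_{u\in B(\tau)} g_N(\abs{u}).
\]

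Next I would parametrise each tree $\tau$ appearing in this sum by a pair (shape, heights). The \emph{shape} encodes the planar combinatorial structure, the out-degrees of the $b$ branch points, and the relative order of their heights; it does not depend on $N$, and the number of shapes with $k$ leaves and $b$ branch points is bounded by a finite combinatorial constant $c_{k,b}$. The \emph{heights} $(h_1,\dots,h_b) \in \{0,\dots,N-1\}^b$ list the values $\abs{u_1},\dots,\abs{u_b}$ of the branch points in a canonical order prescribed by the shape, subject to the ultrametric constraints $h_i<h_j$ whenever $u_j$ descends from $u_i$.

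Dropping those ultrametric constraints only enlarges the height sum and yields, for each fixed shape,
\[
\sum_{(h_1,\dots,h_b)\in\{0,\dots,N-1\}^b}\prod_{j=1}^{b} g_N(h_j) = \Big(\sum_{i=0}^{N-1} g_N(i)\Big)^b.
\]
Summing over the at most $c_{k,b}$ possible shapes therefore gives
\[
\E\Big[\prod_{u\in B(S_k)} g_N(\abs{u})\,\1_{\abs{B(S_k)}=b}\Big] \le \frac{c_{k,b}}{N^{k-1}}\Big(\sum_{i=0}^{N-1} g_N(i)\Big)^b,
\]
which is precisely the claimed $O_N$-estimate. The only mildly delicate point is verifying that the shape-plus-heights parametrisation covers every relevant tree at least once (which is all we need for an upper bound); once that is granted, the argument is essentially clean bookkeeping with no analytic difficulty, and I do not anticipate a real obstacle.
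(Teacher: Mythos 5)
Your argument is correct, but it takes a genuinely different route from the paper's. The paper proves the bound by induction on $k$ using the CPP grafting construction: $S_{k+1}$ is obtained from $S_k$ by attaching a branch of uniform depth $H_k$ to the right of the rightmost leaf, and conditionally on $S_k$ one distinguishes whether this creates a new branch point (contributing a factor bounded by $\tfrac{1}{N}\sum_i g_N(i)$) or reuses an existing one (an event of probability at most $(k-1)/N$, since $H_k$ must coincide with one of $H_1,\dots,H_{k-1}$). Your approach instead does the counting directly in one step: use the bijection $\mathcal{H}$ to write the expectation as $N^{-(k-1)}\sum_{\tau:\abs{B(\tau)}=b}\prod_{u\in B(\tau)}g_N(\abs{u})$, fibre the sum over the finitely many reduced combinatorial shapes, and bound each fibre by $\big(\sum_i g_N(i)\big)^b$ after dropping the ancestry constraints on branch-point heights. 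Both are valid; yours makes the $O_N$-constant explicit as a combinatorial quantity $c_{k,b}$ and avoids induction, while the paper's is a bit shorter and reuses the CPP grafting picture that appears throughout the paper. Two small remarks on your write-up: (1) including the "relative order of heights" in the shape is unnecessary — the reduced planar tree structure alone already gives an injective parametrisation by $(\text{shape},\, (h_u)_{u\in B})$, and keeping the redundant information only produces harmless overcounting; (2) the coverage claim you flag as "mildly delicate" is indeed immediate, since any tree with $b$ branch points determines its reduced shape and the generations of its branch points, so it appears (at least once) in your parametrisation.
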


\begin{proof}
	We prove the result by induction on $k$. For $k = 1$ there is no
	branch point and the claim holds. By the CPP construction, a uniform
	ultrametric tree $S_{k+1}$ with $k+1$ leaves is obtained by grafting a branch
	$k+1$ with uniform length to the right of a uniform ultrametric tree
	$S_k$ with $k$ leaves. There are two options for $S_{k+1}$ to have $b$
	branch points. Either $S_k$ had $b-1$ branch points and a new branch
	point is created. Or $S_k$ has $b$ branch points, and no new branch
	point is created. This requires that $H_k^{(N)}$ takes the same value as
	one of the variables $(H_1^{(N)}, \dots, H^{(N)}_{k-1})$, which occurs with
	probability at most $(k-1)/N$. Therefore,
	\begin{multline*}
		\E\Big[ \prod_{u \in B(S_{k+1})} g_N(\abs{u}) \1_{\abs{B(S_{k+1})} = b} \mid S_k \Big] \\ 
		\le \frac{1}{N} \sum_{i = 0}^{N-1} g_N(i)
		\cdot \prod_{u \in B(S_k)} g_N(\abs{u}) \1_{B(S_k) = {b-1}} 
		+ \frac{k-1}{N} \prod_{u \in B(S_k)} g_N(\abs{u}) \1_{B(S_k) = b} ,
	\end{multline*}
	and the result follows.
\end{proof}

\begin{lem} \label{lem:correct_truncation}
	Assume that the environment satisfies
	\eqref{eq:lindeberg}. Then for any $t > 0$ we can find
	$\beta_N = o(\kappa_N)$ such that
	\begin{equation} \label{eq:mean_truncation_conv}
		\frac{1}{\kappa_N} \sum_{j=1}^{Nt} \E[\xi_j^2 \1_{\xi_j >
			\beta_N} ] \to 0, \quad \sum_{j=1}^{Nt} \E[\xi_j \1_{\xi_j > \beta_N} ] 
		\to 0, \quad \text{as $N \to \infty$.}
	\end{equation}
\end{lem}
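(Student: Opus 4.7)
The plan is to apply a diagonal extraction argument to assumption \eqref{eq:lindeberg}. For each integer $m \geq 1$, let
\[
g_N(m) \coloneqq \frac{1}{\kappa_N} \sum_{k=1}^{Nt} \E[\xi_k^2 \1_{\xi_k \ge \kappa_N/m}].
\]
Assumption \eqref{eq:lindeberg}, applied with $\epsilon = 1/m$, guarantees that $g_N(m) \to 0$ as $N \to \infty$ for every fixed $m$. The idea is then to extract a strictly increasing sequence $N_1 < N_2 < \cdots$ of integers (with $N_m \to \infty$) such that $g_N(m) \le 1/m^2$ for all $N \ge N_m$, and to set
\[
\beta_N \coloneqq \lfloor \kappa_N / m \rfloor \quad \text{whenever } N_m \le N < N_{m+1}.
\]

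With this construction, $\beta_N = o(\kappa_N)$ is immediate, since the index $m(N)$ associated to $N$ tends to infinity as $N \to \infty$, hence $\beta_N/\kappa_N \to 0$. The second-moment condition in \eqref{eq:mean_truncation_conv} follows directly, because the event $\{\xi_j > \beta_N\}$ is contained in $\{\xi_j \ge \kappa_N/m\}$, so
\[
\frac{1}{\kappa_N} \sum_{j=1}^{Nt} \E[\xi_j^2 \1_{\xi_j > \beta_N}] \le g_N(m) \le 1/m^2 \to 0.
\]

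For the first-moment condition, the plan is to use the crude bound $\xi \1_{\xi > \beta_N} \le \xi^2/\beta_N$, which gives
\[
\sum_{j=1}^{Nt} \E[\xi_j \1_{\xi_j > \beta_N}] \le \frac{1}{\beta_N} \sum_{j=1}^{Nt} \E[\xi_j^2 \1_{\xi_j > \beta_N}] \le \frac{\kappa_N}{\beta_N}\, g_N(m) \sim m \cdot g_N(m) \le \frac{1}{m},
\]
which tends to $0$ as $N \to \infty$. The only delicate point — and the step to watch — is that passing from the second-moment control to the first-moment control introduces the factor $\kappa_N/\beta_N \sim m$, which must be absorbed by the decay of $g_N(m)$. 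This is why we chose the threshold $1/m^2$ rather than merely $1/m$; any rate $o(1/m)$ would in fact suffice. Once this observation is made, no further technicality arises.
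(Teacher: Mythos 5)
Your proof is correct and follows essentially the same diagonal-extraction strategy as the paper's: both rely on the bound $\xi\1_{\xi > a} \le \xi^2/a$ together with a recursive choice of thresholds $N_1 < N_2 < \cdots$, the only cosmetic difference being that the paper builds both the first- and second-moment decays directly into the extraction while you extract only on the second-moment quantity (with the faster rate $1/m^2$) and then deduce the first-moment decay. One small point to tighten: $\kappa_N/\beta_N \sim m$ requires $\kappa_N \gg m$ on the block $[N_m, N_{m+1})$, which is guaranteed only if you also enlarge $N_m$ so that, say, $\kappa_N \ge 2m$ for $N \ge N_m$ (possible since $\kappa_N \to \infty$); with that adjustment the estimate $\kappa_N/\beta_N \le 2m$ holds and your argument closes.
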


\begin{proof}
	We simply adapt the argument of \cite[Lemma~22]{Harris2020} to our case. By assumption \eqref{eq:lindeberg}, for any fixed $\epsilon$	
	\[
	\sum_{j=1}^{tN} \E[\xi_j \1_{\xi_j > \epsilon \kappa_N} ]
	\le \frac{1}{\epsilon \kappa_N} \sum_{j=1}^{tN} \E[\xi_j^2 \1_{\xi_j > \epsilon \kappa_N}]
	\to 0, \quad \text{as $N \to \infty$.}
	\]
	Now take $\epsilon_k \to 0$ and chose recursively $N_{k+1} > N_k$
	such that
	\[
	\forall k \ge 1,\; \forall N \ge N_k,\quad 
	\sum_{j=1}^{tN} \E[\xi_j \1_{\xi_j > \epsilon_k \kappa_N} ]< \eps_k, \quad \text{and} \quad  \frac{1}{\kappa_N} \sum_{j=1}^{Nt} \E[\xi_j^2 \1_{\xi_j > \eps_k \kappa_N} ]  < \epsilon_k.
	\]
	Set $\beta_N = \epsilon_k \kappa_N$ for $N_k \le N < N_{k+1}$. Clearly
	$\beta_N = o(\kappa_N)$ and \eqref{eq:mean_truncation_conv} is fulfilled.
\end{proof}

\begin{lem} \label{lemma_jump_bound}
	Suppose that \eqref{Cond_on_the_environment} holds. For any $t > 0$
	there exist two constants $c, C > 0$ depending only on $t$ such that 
	\[
	\forall N \ge 1, \forall i \le tN,\quad c \le f'_i(1) \le C.
	\]
	Moreover, for each continuity point $s \ge 0$ of $(X_u, u \ge 0)$ 
	and sequence $s_N \to s$,
	\[
	f'_{s_N N}(1) \to 1 \quad \text{as $N \to \infty$.}
	\]
\end{lem}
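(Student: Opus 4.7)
The plan is to derive both claims from the Skorohod convergence $\mu_{\cdot N} \to e^{X_\cdot}$ provided by assumption \eqref{Cond_on_the_environment}, using only the following standard fact about the Skorohod topology: if $Y_N \to Y$ in $D([0,T])$, then $\|Y_N\|_{\infty} \to \|Y\|_{\infty}$, so $(Y_N)_N$ is uniformly bounded on $[0,T]$. This follows by writing $\|Y_N \circ \lambda_N - Y\|_\infty \to 0$ for the associated time changes $\lambda_N$ and observing that $\|Y_N \circ \lambda_N\|_\infty = \|Y_N\|_\infty$ since $\lambda_N$ is a bijection of $[0,T]$.

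For the first claim, fix $t > 0$ and pick a continuity point $t' > t$ of $X$, which exists since $X$ has at most countably many discontinuities. Applying the fact above to $\log \mu_{\cdot N} \to X_\cdot$ on $D([0,t'])$, together with the boundedness of $X$ on the compact interval $[0,t']$, there exists $M = M(t) > 0$ such that
\[
\sup_{N \ge 1}\ \sup_{s \le t} |\log \mu_{sN}| \le M.
\]
Writing $\log f'_i(1) = \log \mu_i - \log \mu_{i-1}$, we then obtain $|\log f'_i(1)| \le 2M$ uniformly in $N \ge 1$ and $i \le tN$, yielding the desired bounds $c = e^{-2M}$ and $C = e^{2M}$.

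For the second claim, let $s \ge 0$ be a continuity point of $X$ and $s_N \to s$. A standard consequence of Skorohod convergence is that the value at any continuity point of the limit is recovered by any sequence of times converging to that point, so $\mu_{\lfloor s_N N \rfloor} \to e^{X_s}$. Applying the same property to the sequence $s_N - 1/N \to s$, which also converges to the continuity point $s$, we obtain $\mu_{\lfloor s_N N \rfloor - 1} \to e^{X_s}$. Since $e^{X_s} > 0$, the ratio $f'_{\lfloor s_N N \rfloor}(1) = \mu_{\lfloor s_N N \rfloor} / \mu_{\lfloor s_N N \rfloor - 1}$ converges to $1$.

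The only real subtlety is handling the case in which $t$ itself may be a discontinuity point of $X$ in the first claim; this is resolved by enlarging the interval to $[0,t']$ for a continuity point $t' > t$. Beyond that, both claims are essentially direct consequences of the defining properties of convergence in the Skorohod topology.
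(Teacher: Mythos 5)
Your proof is correct and follows essentially the same approach as the paper's: both derive the uniform bound from the fact that Skorohod convergence controls sup-norms on compacts, and both obtain the pointwise limit from the identity $f'_{i}(1) = \mu_{i}/\mu_{i-1}$ together with convergence of the rescaled sequence at continuity points of $X$. You simply spell out the details (the time-change argument for sup-norms, the enlargement to a continuity point $t' > t$, and evaluating at $s_N$ and $s_N - 1/N$) that the paper leaves implicit.
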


\begin{proof}
    We have
    \[
        \forall t \ge 0,\quad f'_{tN}(1) 
        = \frac{\mu_{tN}}{\mu_{tN-1}} 
        = \exp \big( \log \mu_{tN} - \log \mu_{tN - 1} \big),
    \]
    with the convention that $\mu_{-1} = 1$. By assumption
    \eqref{Cond_on_the_environment}, $(\log \mu_{tN}, t \ge 0)$ converges in
    the Skorohod topology to $(X_t, t \ge 0)$. Therefore, at every continuity
    point $s$ of $(X_t, t \ge 0)$, $f'_{s_N N}(1) \to 1$ with $s_N \to s$,
    leading to the second statement. Moreover, since $(\log \mu_{tN}, t \ge
    0)$ converges in the Skorohod topology, $(\abs{\log \mu_{tN}}, t \ge 0)$
    is uniformly bounded in $N$ on compact intervals. Taking the exponential
    leads to the first part of the claim.
\end{proof}

\begin{lem} \label{conv_variance}
	For any continuity point $t > 0$ of $\sigma^2$, and any
	bounded continuous map $\phi$, under assumptions
	\eqref{Cond_on_the_environment} and \eqref{assumption_variance},
	\[
	\frac{1}{\kappa_N} \sum_{i=0}^{tN} \phi\big(\tfrac{i}{N}\big) \frac{f''_{i+1}(1)}{f'_{i+1}(1)} \frac{1}{\mu_{i+1}}
	\to \int_{[0,t]} \phi(s) e^{-X_s} \sigma^2(d s),
	\quad \text{as $N \to \infty$.}
	\]
\end{lem}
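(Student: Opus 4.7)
The plan is to interpret the left-hand side as an integral of the form $\int_{[0,t]} \phi(s)\, g_N(s)\, dV_N(s)$, where
\[
V_N(s) := \frac{1}{\kappa_N}\sum_{k=1}^{\lfloor sN\rfloor+1} f''_k(1), \qquad g_N(s) := \frac{1}{f'_{\lfloor sN\rfloor+1}(1)\,\mu_{\lfloor sN\rfloor+1}},
\]
so that the claim reduces to passing to the limit in $\int \phi g_N\, dV_N$. I will combine the weak convergence $dV_N \to d\sigma^2$ with the pointwise convergence $g_N(\cdot) \to e^{-X_{\cdot}}$ via a Skorohod representation argument.

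The first step is to verify these two convergences. Using $\mu_{i+1} = \mu_i f'_{i+1}(1)$ together with Lemma~\ref{lemma_jump_bound} (which gives $f'_{\lfloor s_N N\rfloor+1}(1)\to 1$ at continuity points of $X$) and \eqref{Cond_on_the_environment} (from which $\mu_{\lfloor s_N N\rfloor+1}\to e^{X_s}$ whenever $s_N\to s$ at a continuity point of $X$, a standard consequence of Skorohod convergence to a cadlag limit), one obtains $g_N(s_N)\to e^{-X_s}$ along any such sequence. Moreover, since the Skorohod limit $e^{X_{\cdot}}$ is bounded above and bounded away from $0$ on the compact interval $[0,t]$, combining Lemma~\ref{lemma_jump_bound} with the Skorohod convergence gives uniform upper and lower bounds on $g_N$ on $[0,t]$, independent of $N$. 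For the measures, assumption \eqref{assumption_variance} gives $V_N(s)\to\sigma^2(s)$ at every continuity point of $\sigma^2$; since both are nondecreasing cadlag this yields weak convergence $dV_N\to d\sigma^2$ on $[0,t]$. In particular $V_N([0,t])\to\sigma^2(t)$ and the normalized probability measures $V_N/V_N([0,t])$ converge weakly to $\sigma^2(\cdot)/\sigma^2(t)$.

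Applying Skorohod's representation theorem, construct on a common probability space random variables $S_N\sim V_N/V_N([0,t])$ and $S\sim\sigma^2(\cdot)/\sigma^2(t)$ with $S_N\to S$ almost surely. The assumption that $X$ and $\sigma^2$ never jump at the same time guarantees that the countable jump set of $X$ has zero $d\sigma^2$-measure, so $S$ lies almost surely at a continuity point of $X$. By the previous paragraph $g_N(S_N)\to e^{-X_S}$ almost surely, and continuity of $\phi$ gives $\phi(S_N)\to\phi(S)$. Dominated convergence then yields
\[
\int\phi g_N\,dV_N = V_N([0,t])\cdot\E\!\left[\phi(S_N)g_N(S_N)\right]\to \sigma^2(t)\cdot\E\!\left[\phi(S)e^{-X_S}\right] = \int_{[0,t]}\phi(s)e^{-X_s}\,\sigma^2(ds),
\]
proving the lemma.

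The main delicate point is this last step: the approximating integrands $g_N$ converge only pointwise to the discontinuous limit $e^{-X_{\cdot}}$, so standard continuous-mapping arguments for weakly convergent measures do not apply directly. The Skorohod representation handles this cleanly, but only because the jump set of $X$ is $d\sigma^2$-null, which is precisely what the non-simultaneous-jumps hypothesis on $X$ and $\sigma^2$ provides.
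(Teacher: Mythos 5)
Your argument is correct and takes essentially the same route as the paper: rewrite the sum as an integral against the normalized variance measure, establish weak convergence of that measure to $\sigma^2(\cdot)/\sigma^2(t)$, show $g_N(s_N)\to e^{-X_s}$ at continuity points of $X$ with uniform bounds, and pass to the limit. The paper simply cites Kallenberg's extended continuous-mapping theorem (Theorem 4.27) for the last step, while you unpack the same argument via Skorohod representation, usefully making explicit that the non-simultaneous-jumps hypothesis is precisely what renders the jump set of $X$ a $\sigma^2$-null set.
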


\begin{proof}
	Let $X_N$ be distributed as 
	\[
	\forall i \le tN,\quad \PP(X_N = i) = \frac{f''_i(1)}{\sum_{j=1}^{tN} f''_j(1)}.
	\]
	The integral on the left-hand side can be expressed as 
	\[
	\Big(\frac{1}{\kappa_N}\sum_{i=1}^{tN} f''_i(1)\Big) \E\Big[ \frac{\phi(\frac{X_N}{N})}{f'_{X_N+1}(1)\mu_{X_N+1}} \Big].
	\]
	By assumption \eqref{assumption_variance}, the distribution of
	$X_N/N$ converges to the distribution with distribution function
	$s \mapsto \sigma^2(s) / \sigma^2(t)$ and the first term in the
	above expression converges to $\sigma^2(t)$. By
	Lemma~\ref{lemma_jump_bound}, for any continuity point $s$ of $(X_t, t
	\ge 0)$ and sequence $s_N \to s$, $f'_{Ns_N}(1) \to 1$ and
	by \eqref{Cond_on_the_environment} we have $\mu_{Ns_N} \to e^{X_s}$.
	Therefore using for instance \cite[Theorem~4.27]{Kallenberg2002foundations}
	proves that
	\[
	\Big(\frac{1}{\kappa_N}\sum_{i=1}^{tN} f''_i(1)\Big) \E\Big[ \frac{\phi(\frac{X_N}{N})}{f'_{X_N+1}(1)\mu_{X_N+1}} \Big]
	\to \sigma^2(t) \int_{[0,t]} \phi(x) e^{-X_s} \frac{\sigma^2(ds)}{\sigma^2(t)},
	\quad \text{as $N \to \infty$},
	\]
	yielding the claim.
\end{proof}

\begin{lem}\label{Lemma lower bound phi}
	Under the assumptions \eqref{Cond_on_the_environment},
	\eqref{assumption_variance} and \eqref{eq:uniform_integrability},
	for any fixed $t > 0$, there exists some constant $c>0$ such that 
	\begin{align}
		\forall s \le t,\quad  \sum_{j=Ns}^{Nt}
		\varphi_j(0) \geq c \sqrt{\kappa_N} \big(\sigma^2(t)-\sigma^2(s)\big),
	\end{align}
	where $\varphi_j$ is the shape function of $f_j$.
\end{lem}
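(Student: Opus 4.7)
The key identity to establish first is a workable expression for $\varphi_j(0)$. Letting $p_j = \PP(\xi_j \ge 1)$ and $m_j = f_j'(1) = \E[\xi_j]$, a direct computation gives
\[
    \varphi_j(0) = \frac{1}{1-f_j(0)} - \frac{1}{f_j'(1)} = \frac{\E[(\xi_j-1)\mathbf{1}_{\xi_j \ge 1}]}{p_j \, m_j}.
\]
Since $m_j$ is uniformly bounded above by a constant $C$ (Lemma~\ref{lemma_jump_bound}) and $p_j \le 1$, this implies the upper bound $\E[(\xi_j-1)\mathbf{1}_{\xi_j \ge 1}] \le C\, \varphi_j(0)$, which is the direction I need in order to pass from a quantity involving $f_j''(1)$ down to $\varphi_j(0)$.

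The main step is a truncation-and-comparison argument linking $f_j''(1)$ to $\E[(\xi_j-1)_+]$. Set the Lindeberg threshold $K_N = \eps \sqrt{\kappa_N}$. Splitting according to whether $\xi_j \le K_N$ and bounding $\xi_j(\xi_j-1) \le K_N (\xi_j-1)$ on the small-values event and $\xi_j(\xi_j-1) \le \xi_j^2$ on the complement gives
\[
    f_j''(1) \;=\; \E[\xi_j(\xi_j-1)]
    \;\le\; K_N \, \E[(\xi_j-1)\mathbf{1}_{\xi_j \ge 1}] + \E[\xi_j^2 \mathbf{1}_{\xi_j > K_N}].
\]
Combining with the previous step, I get $f_j''(1) \le CK_N\, \varphi_j(0) + \E[\xi_j^2 \mathbf{1}_{\xi_j > K_N}]$. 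Summing over $j$ from $Ns$ to $Nt$ yields
\[
    \sum_{j=Ns}^{Nt} f_j''(1) \;\le\; CK_N \sum_{j=Ns}^{Nt} \varphi_j(0) + \sum_{j=1}^{Nt} \E[\xi_j^2 \mathbf{1}_{\xi_j > K_N}].
\]

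To conclude, I will apply the two convergence assumptions. On the left, \eqref{assumption_variance} yields $\sum_{j=Ns}^{Nt} f_j''(1) \ge \tfrac{1}{2} \kappa_N (\sigma^2(t) - \sigma^2(s))$ for $N$ large (treating $s$ as a continuity point of $\sigma^2$, which is the setting in which the lemma is applied inside Lemma~\ref{lem:bound-proba-extinct}). On the right, the strong Lindeberg assumption \eqref{eq:uniform_integrability} applied precisely at threshold $\eps \sqrt{\kappa_N}$ gives $\sum_{j=1}^{Nt} \E[\xi_j^2 \mathbf{1}_{\xi_j > \eps\sqrt{\kappa_N}}] = o(\kappa_N)$. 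Solving for $\sum \varphi_j(0)$ and dividing by $CK_N = C\eps\sqrt{\kappa_N}$ delivers the lower bound $c \sqrt{\kappa_N}(\sigma^2(t) - \sigma^2(s))$ with $c = 1/(4C\eps)$ for $N$ large.

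The only real obstacle is the step that bounds $f_j''(1)$ by $K_N \E[(\xi_j-1)_+]$ plus a tail remainder: one needs precisely the $\sqrt{\kappa_N}$-Lindeberg condition \eqref{eq:uniform_integrability} (not the weaker \eqref{eq:lindeberg}) in order for the tail term to be $o(\kappa_N)$ at threshold $K_N = \eps \sqrt{\kappa_N}$. This is consistent with the remark following the proof of Theorem~\ref{thm:kolmogorov}, which identifies exactly this lemma as the place where the stronger Lindeberg assumption is used.
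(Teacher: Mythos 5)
Your proof is correct and takes essentially the same approach as the paper's: both truncate $\xi_j(\xi_j-1)$ at level $\eps\sqrt{\kappa_N}$, bound the small-values part by $\eps\sqrt{\kappa_N}\,\E[(\xi_j-1)\1_{\xi_j\ge 1}]\le C\eps\sqrt{\kappa_N}\,\varphi_j(0)$ using the identity $\varphi_j(0)=\E[(\xi_j-1)\1_{\xi_j\ge 1}]/(\PP(\xi_j>0)f_j'(1))$ and the uniform bound on $f_j'(1)$, and dispatch the tail term via \eqref{eq:uniform_integrability} together with the variance convergence \eqref{assumption_variance}. The only cosmetic difference is that you phrase the bound in expectation form while the paper writes it as a sum over $f_j[k]$ and multiplies and divides by $\PP(\xi_j>0)f_j'(1)$ explicitly; the underlying manipulation is identical.
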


\begin{proof}
	Recall the definition of the shape function
	\begin{align}
		\varphi_j(0)= \frac{1}{1-f_j(0)}- \frac{1}{f_j'(1)} = \frac{\sum_{k=2}^{\infty} f_j[k] (k-1)}{\pp{\xi_j >0} f_j'(1)}.
	\end{align}
	Fix some $\epsilon > 0$, we have
	\begin{align}
		\sigma^2(t) -\sigma^2(s) &= \lim_{N \to \infty} \frac{1}{\kappa_N}\sum_{j=Ns}^{Nt} f_j''(1) \\
		&= \lim_{N \to \infty} \frac{1}{\kappa_N} \sum_{j=Ns}^{Nt} 
		\left( \sum_{k=2}^{\epsilon \sqrt{\kappa_N}} f_j[k] k(k-1) +
		\sum_{k=\epsilon \sqrt{\kappa_N}+1}^{\infty}f_j[k](k(k-1))\right) \label{equation alpha_N}.
	\end{align}
	By assumption \eqref{eq:uniform_integrability} the second term
	vanishes. Additionally, we compute for the first term in
	\eqref{equation alpha_N}
	\begin{align}
		\sum_{j=sN}^{Nt} \sum_{k=2}^{\epsilon \sqrt{\kappa_N}} f_j[k] k(k-1) 
		&\leq \epsilon \sqrt{\kappa_N} \sum_{j=sN}^{Nt} \frac{\sum_{k=2}^{\epsilon \sqrt{\kappa_N}} f_j[k] (k-1)}{\pp{\xi_j>0}f_j'(1)} \pp{\xi_j>0}f_j'(1) \\
		&\leq \epsilon \sqrt{\kappa_N} \sum_{j=sN}^{Nt} \varphi_j(0) f_j'(1) \\
		&\leq \epsilon \sqrt{\kappa_N} \sup_{1\leq j \leq Nt} f_j'(1) \sum_{j=sN}^{Nt} \varphi_j(0).
	\end{align}
	Due to Lemma~\ref{lemma_jump_bound} $\sup_{1\leq j \leq Nt}
	f_j'(1) $ is bounded, and choosing $\epsilon$ sufficiently small
	yields the claim.
\end{proof}

Finally, consider the following assumption. For any $\epsilon > 0$,
there exists $K$ such that 
\begin{equation} \label{eq:kerstingCondition}
	\forall N \ge 1,\quad \forall k \le tN,\quad \E[\xi_k^2 \1_{\xi_k > K(1+f'_k(1))} ]
	\le 
	\epsilon \E[\xi_k^2 \1_{\xi_k \ge 2} ] < \infty.
\end{equation}
This corresponds to \cite[Assumption (B)]{Kersting2020} and
\cite[Assumption ($\ast$)]{Kersting2022}.

\begin{lem} \label{lem:kerstingAssumption}
	Consider a sequence of environments that fulfils
	\eqref{Cond_on_the_environment} and \eqref{assumption_variance}. If
	it further fulfils \eqref{eq:kerstingCondition}, then the following equivalent uniform integrability properties hold:
	\begin{itemize}
		\item[(i)] For any sequence $K_N \to \infty$,
		\[
		\lim_{N \to \infty} \frac{1}{\kappa_N} \sum_{k=1}^{tN}
		\E[\xi_k^2 \1_{\xi_k > K_N} ] = 0.
		\]
		\item[(ii)] For any $\epsilon > 0$, there exists $K$ such that 
		\[
		\lim_{N \to \infty} \frac{1}{\kappa_N} \sum_{k=1}^{tN}
		\E[\xi_k^2 \1_{\xi_k > K} ] \le \epsilon.
		\]
	\end{itemize}
\end{lem}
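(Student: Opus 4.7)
The plan is to first establish the equivalence of (i) and (ii) via a routine uniform-integrability argument, and then show that (ii) is a direct consequence of the Kersting condition \eqref{eq:kerstingCondition}, so that (i) follows as well.

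For the equivalence, write $h_N(K) = \tfrac{1}{\kappa_N} \sum_{k=1}^{tN} \E[\xi_k^2 \1_{\xi_k > K}]$. The implication (ii) $\Rightarrow$ (i) is immediate: given $\epsilon > 0$, pick $K$ so that $\limsup_N h_N(K) \le \epsilon$; any sequence $K_N \to \infty$ eventually dominates $K$ and therefore $\limsup_N h_N(K_N) \le \limsup_N h_N(K) \le \epsilon$, and $\epsilon$ was arbitrary. For (i) $\Rightarrow$ (ii), I would argue by contraposition: if (ii) fails, some $\epsilon_0 > 0$ witnesses that for every integer $m$ one can find $N_m$ (increasing in $m$) with $h_{N_m}(m) > \epsilon_0$. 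Setting $K_N = m$ for $N_m \le N < N_{m+1}$ produces a sequence $K_N \to \infty$ along which $h_N(K_N)$ does not tend to zero, contradicting (i).

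To derive (ii) from \eqref{eq:kerstingCondition}, the first step is to use Lemma~\ref{lemma_jump_bound} to fix a constant $C$ with $f'_k(1) \le C$ for all $k \le tN$ and all $N$, so that $1 + f'_k(1) \le 1+C$. Given $\epsilon > 0$, the Kersting condition yields some $K'$ such that, uniformly in $N$ and $k \le tN$,
\[
    \E[\xi_k^2 \1_{\xi_k > K'(1+f'_k(1))}] \le \epsilon\, \E[\xi_k^2 \1_{\xi_k \ge 2}].
\]
Taking $K = K'(1+C)$ ensures $\{\xi_k > K\} \subseteq \{\xi_k > K'(1+f'_k(1))\}$, and the elementary inequality $i^2 \le 2i(i-1)$ for $i \ge 2$ gives $\E[\xi_k^2 \1_{\xi_k \ge 2}] \le 2 f''_k(1)$. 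Summing over $k \le tN$ and dividing by $\kappa_N$, assumption \eqref{assumption_variance} yields
\[
    \limsup_{N \to \infty} \frac{1}{\kappa_N} \sum_{k=1}^{tN} \E[\xi_k^2 \1_{\xi_k > K}]
    \le 2\epsilon\, \sigma^2(t).
\]
Since $\epsilon > 0$ is arbitrary and $\sigma^2(t)$ is fixed, this is (ii) up to a harmless relabelling of $\epsilon$.

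There is no genuinely hard step here; everything is elementary. The only points to be careful about are the uniform upper bound on $f'_k(1)$ (provided by Lemma~\ref{lemma_jump_bound}) so that a single threshold $K$ controls $K'(1+f'_k(1))$ uniformly in $k$, and the numerical comparison $i^2 \le 2i(i-1)$ that turns the truncated second moment $\E[\xi_k^2 \1_{\xi_k \ge 2}]$ into the factorial second moment $f''_k(1)$ needed to invoke \eqref{assumption_variance}.
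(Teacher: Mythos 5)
Your proof is correct and follows essentially the same route as the paper: establish the equivalence of (i) and (ii) (which the paper simply calls ``easily seen''), then derive (ii) from \eqref{eq:kerstingCondition} by using Lemma~\ref{lemma_jump_bound} to turn $K'(1+f'_k(1))$ into a single uniform threshold $K$, and sum using \eqref{assumption_variance}. One small point where yours is actually a little cleaner: you bound $\E[\xi_k^2 \1_{\xi_k \ge 2}] \le 2 f_k''(1)$ via $i^2 \le 2i(i-1)$ for $i\ge 2$, which invokes \eqref{assumption_variance} directly, whereas the paper writes $\E[\xi_k^2 \1_{\xi_k \ge 2}] \le \E[\xi_k^2]$ and then asserts $\frac{1}{\kappa_N}\sum_k \E[\xi_k^2] \to \sigma^2(t)$ — strictly speaking this ignores the term $\frac{1}{\kappa_N}\sum_k f'_k(1)$, which need not vanish (e.g.\ when $\kappa_N = N$). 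The conclusion is unaffected since either way the $\limsup$ is $O(\epsilon)$, but your comparison to the factorial moment sidesteps the issue.
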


\begin{proof}
	It is easily seen that (i) and (ii) are equivalent. We thus only
	prove (ii). By Lemma~\ref{lemma_jump_bound}, the
	expectations $f'_k(1)$ are bounded uniformly in $N$ and $k \le Nt$,
	so that \eqref{eq:kerstingCondition} yields that for any $\epsilon >
	0$ there is $K$ such that 
	\[
	\forall N \ge 1,\quad \forall k \le tN,\quad \E[\xi_k^2 \1_{\xi_k > K} ]
	\le 
	\epsilon \E[\xi_k^2 \1_{\xi_k \ge 2} ]
	\le \epsilon \E[\xi_k^2].
	\]
	Taking a sum on both sides, and using the convergence of the variance
	\eqref{assumption_variance} leads to 
	\begin{equation*}
		\frac{1}{\kappa_N} \sum_{k=1}^{tN} \E[\xi^2_k \1_{\xi_k > K} ]
		\le \frac{\epsilon}{\kappa_N}\sum_{k=1}^{tN} \E[\xi_k^2]
		\to \epsilon \sigma^2(t) \quad \text{as $N \to \infty$}
	\end{equation*}
	yielding point (ii).
\end{proof}

\end{document}